\newtheorem{Definition}{\bf \large Definition}[section]
\newtheorem{Theorem}{\bf \large Theorem}[section]
\newtheorem{PROPOSITION}{\bf \large Proposition}[section]
\newtheorem{Corollary}{\bf \large Corollary}[section]
\newtheorem{lemma}{\bf \large Lemma}[section]
\newtheorem{Remark}{\bf \large Remark}[section]
\newtheorem{Lemma}{\bf \large Lemma}[section]
\newtheorem{Conjecture}{\bf \large Conjecture}[section]
\title{\textbf{M\"{o}bius and Laguerre geometry of Dupin Hypersurfaces}}
\author {{ Tongzhu Li$^1$, Jie Qing$^2$}, Changping Wang$^3$ \\
\small{1 Department of Mathematics, Beijing Institute of
Technology,} \\
\small{Beijing,100081,China, E-mail:litz@bit.edu.cn.} \\
\small{2 Department of Mathematics, University of California, Santa Cruz,}\\
\small{CA, 95064, U.S.A., E-mail:qing@ucsc.edu.}\\
\small{3 College of Mathematics and Computer Science, Fujian Normal University,}\\
\small{Fuzhou, 350108, China, E-mail:cpwang@fjnu.edu.cn}}
\date{}
\begin{document}
\maketitle
\begin{abstract}
In this paper we show that a Dupin hypersurface with constant M\"{o}bius curvatures is M\"{o}bius equivalent to either an isoparametric
hypersurface in the sphere or a cone over an isoparametric hypersurface in a sphere. We also show that a Dupin hypersurface with constant
Laguerre curvatures is Laguerre equivalent to a flat Laguerre isoparametric hypersurface.
These results solve the major issues related to the conjectures of Cecil et al on the classification of Dupin hypersurfaces.
\end{abstract}
\medskip\noindent
{\bf 2000 Mathematics Subject Classification:} 53A30, 53C40;
\par\noindent {\bf Key words:} Dupin hypersurfaces, M\"{o}bius curvatures, Laguerre curvatures, Codazzi tensors, Isoparametric tensors, .

\vskip 1 cm
\section{Introduction}
Let $M^n$ be an immersed hypersurface in Euclidean space $\mathbb{R}^{n+1}$. A curvature surface of $M^n$ is a smooth connected submanifold $S$
such that for each point $p\in S$, the tangent space $T_pS$ is equal to a principal space of the shape operator $\mathcal{A}$ of $M^n$ at $p$.
The hypersurface $M^n$ is called Dupin hypersurface if, along each curvature surface, the associated principal curvature is constant. The Dupin
hypersurface $M^n$ is called proper Dupin if the number $r$ of distinct principal curvatures is constant on $M^n$.
Both Dupin and properness are invariant under the group of Lie sphere transformations, which was verified by Pinkall \cite{pink2}. The group of
 Lie sphere transformations is generated by its two subgroups: the group of
M\"{o}bius transformations and the group of Laguerre transformations. Hence, due to M\"{o}bius invariance, the theory of Dupin
submanifolds is essentially the same whether it is considered in $\mathbb{R}^{n+1}$,  $\mathbb{S}^{n+1}$, or $\mathbb{H}^{n+1}$.

Dupin surfaces were first studied by Dupin \cite{Dupin} in 1822. Since then, Dupin hypersurfaces have been studied
extensively (cf. \cite{cecil3, cecil4, cecil6,cecil7,cecil8,cecil9,gro,miya1,miya2,miya3,pink1,riv3,st}).
The classification of Dupin hypersurfaces is far from complete, especially for higher
dimensions. An important class of examples are the isoparametric hypersurfaces in
$\mathbb{R}^{n+1}$, $\mathbb{H}^{n+1}$, and $\mathbb{S}^{n+1}$. An isoparametric hypersurface is a hypersurface with constant principal curvatures.
In $\mathbb{R}^{n+1}$ as well as $\mathbb{H}^{n+1}$, an isoparametric hypersurface has no more than 2 distinct principal curvatures and is completely
classified (cf. \cite{cecil3}).
On the other hand, in $\mathbb{S}^{n+1}$, there are many more examples (cf. \cite{cecil5,Dorf,im,te1,tho2}). M\"{u}nzner (\cite{mu1,mu2})
showed that the number $r$ of distinct principal curvatures of an isoparametric hypersurface in $\mathbb{S}^{n+1}$ must be $1, 2, 3, 4$ or $6$.
Cartan \cite{car0} classified those with $r\leq 3$.

Thorbergsson \cite{tho1} showed the restriction that $r=1,2,3,4$ or $6$  on the number of distinct principal curvatures also
holds for compact proper Dupin hypersurfaces embedded in $\mathbb{S}^{n+1}$. Compact proper Dupin hypersurfaces with $r\leq 3$
are completely classified (cf. \cite{cecil10,miya1}). However, it is a different story if the compactness is dropped. In fact, Pinkall \cite{pink2} discovered the basic
constructions of building tubes, cylinders, cones and surfaces of revolution over a Dupin
hypersurface. It is important to note that these constructions may yield a compact proper
Dupin hypersurface only if the original one is a sphere (cf. \cite{cecil3,pink2}).
A Dupin hypersurface which is locally equivalent by a Lie sphere transformation to a
hypersurface $M^n$ obtained by one of these four basic constructions is said to be reducible, otherwise,
the Dupin hypersurface is said to be irreducible. Local classifications have been obtained for irreducible connected proper Dupin
hypersurfaces with $r\leq 3$ (cf. \cite{cecil7,cecil8,Nie1,pink2}).

In all of the above cases when $r\leq 3$,  compact (or irreducible) Dupin hypersurfaces are known to be Lie equivalent to isoparametric hypersurfaces (cf. \cite{cecil8,miya1,Nie1}). In addition,
Stolz \cite{st} in the cases $r=4$ and Grove and Halperin \cite{gro} in the cases $r=6$ have shown that the multiplicities of the principal curvatures of a
compact proper Dupin hypersurface must be the same as that of an isoparametric hypersurface. Hence it was conjectured that,
at least for compact cases, proper Dupin hypersurface is always  Lie equivalent to an isoparametric hypersurface
(see for instance \cite[p.184]{cecil7}). However, this conjecture was shown to be false by Pinkall and Thorbergsson \cite{pink3} for $r=4$ and separately by Miyaoka
and Ozawa \cite{miya4} for $r=4$ and $r=6$.  The compact proper Dupin hypersurfaces for counterexamples in \cite{pink3,miya4} have non-constant Lie curvatures.
For an oriented hypersurface $M^n$ with $r (\geq 4)$ distinct principal curvatures $\lambda_1,\cdots,\lambda_r$, Miyaoka \cite{miya2} introduced
Lie curvatures as the cross-ratios of the principal curvatures
$$\Psi_{ijst}=\frac{(\lambda_i-\lambda_j)(\lambda_t-\lambda_s)}{(\lambda_i-\lambda_s)(\lambda_t-\lambda_j)}$$
and verified that Lie curvatures $\Psi_{ijst}$ are invariant under Lie sphere transformations.
Obviously, that the Lie curvatures are constant is a necessary condition for a Dupin hypersurface to
be Lie equivalent to an isoparametric hypersurface.  Therefore Cecil, Chi and Jensen \cite{cecil7} proposed the following conjecture.

\begin{Conjecture}\label{c1} (\cite{cecil7}) Every compact connected proper Dupin hypersurface with four or six principal curvatures and constant Lie curvatures is Lie equivalent
to an isoparametric hypersurface in a sphere.
\end{Conjecture}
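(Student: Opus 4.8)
\bigskip
\noindent\textbf{A strategy for Conjecture \ref{c1}.} The plan is to reduce this Lie-geometric statement to the two classification results announced above. If $\lambda_1,\dots,\lambda_r$ are the pointwise-distinct principal curvatures of a proper Dupin hypersurface, then the Lie curvatures $\Psi_{ijst}$ record the ordered tuple $[\lambda_1,\dots,\lambda_r]$ only up to the action of $\mathrm{PGL}(2,\mathbb{R})$ by fractional linear transformations, whereas constant \emph{M\"{o}bius} curvatures amount to this tuple being constant up to the affine action $\lambda\mapsto a\lambda+b$, and constant \emph{Laguerre} curvatures to the curvature radii $1/\lambda_i$ being constant up to an affine action; moreover the M\"{o}bius, resp.\ Laguerre, subgroup of the Lie sphere group realizes precisely the Lie transformations inducing an affine change on the $\lambda_i$, resp.\ on the $1/\lambda_i$. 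Hence Conjecture \ref{c1} would follow once one produces a \emph{single global} Lie sphere transformation $T$ for which $T(M^n)$ has constant M\"{o}bius curvatures, or constant Laguerre curvatures: by the main theorems $T(M^n)$ is then M\"{o}bius, resp.\ Laguerre, equivalent to an isoparametric hypersurface in a sphere, a cone over such, or a flat Laguerre isoparametric hypersurface; each of these is Lie equivalent to an isoparametric hypersurface in a sphere through Pinkall's tube/cone/cylinder constructions, and the compactness of $M^n$ (a Lie-invariant property of its Legendre lift), together with Pinkall's observation that these constructions yield a compact hypersurface only over a sphere, excludes the reducible cone/cylinder alternatives because here $r\ge 4$, leaving only the isoparametric conclusion.

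Thus the core of the proof is the construction of $T$. First I would fix a Lie frame $Y_0,\dots,Y_{n+2}$ along $M^n$ adapted so that the $r$ curvature-sphere maps and the point sphere span the curvature-sphere pencil at each point, and encode the Dupin condition (each curvature sphere parallel along its curvature surface) together with the constancy of all $\Psi_{ijst}$ (the projective configuration of the $r$ curvature spheres on their moving $\mathbb{P}^1$ is frozen) as algebraic relations among the Maurer--Cartan forms. The next step is a structure-group reduction: exploiting the fibrewise $\mathrm{PGL}(2,\mathbb{R})$ gauge freedom that acts on the curvature-sphere pencil, I would normalize three of the curvature-sphere maps to the standard values $0,1,\infty$, which forces every remaining $\lambda_i$ to be constant and hence yields constant M\"{o}bius curvatures. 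For $r=4$ there is a single cross-ratio and this amounts to a direct integrability computation with the Dupin equations; for $r=6$ one must first constrain the admissible constant values of the $\Psi_{ijst}$ --- I expect, following Miyaoka-type arguments on the Dupin structure equations together with the Grove--Halperin multiplicity theorem, that they are forced to be the isoparametric values --- before the global normalization can be carried through.

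The main obstacle is exactly this passage from the \emph{pointwise} projective rigidity of the curvature tuple to a \emph{globally defined} normalizing Lie transformation: the fibrewise $\mathrm{PGL}(2,\mathbb{R})$ gauge is not a priori the restriction of an ambient element of $O(n+2,2)$, so one must show that the required gauge transformation is integrable and has trivial holonomy over $M^n$. This is where the deeper input of the present paper should be decisive --- the flatness and Codazzi properties of the M\"{o}bius and Laguerre invariant tensors that drive the two classification theorems --- combined with the compactness and connectedness of $M^n$. A further technical burden is the bookkeeping of the multiplicities $m_1,\dots,m_r$ and of the degenerate subconfigurations (coincidences among indices, or a curvature-sphere map of lower rank, i.e.\ the genuinely reducible cases), each of which has to be handled separately and then matched, via Stolz's theorem for $r=4$ and Grove--Halperin's for $r=6$, against the known list of isoparametric hypersurfaces in spheres.
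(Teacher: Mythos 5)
\noindent This statement is a \emph{conjecture} in the paper, not a theorem: the paper states Conjecture~\ref{c1} as open and attributes it to Cecil--Chi--Jensen, and the authors then prove \emph{related} but distinct classification results (Theorems~\ref{th1} and~\ref{th2}, Corollary~\ref{cor2}) under the strictly stronger hypothesis of constant M\"{o}bius or Laguerre curvatures. Your proposal is thus attempting to prove something the paper itself does not prove, and a comparison with ``the paper's own proof'' is not possible. The useful question is whether your proposed reduction would actually close the gap, and it does not.

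The critical missing step is precisely the one you identify as ``the main obstacle'' and then declare should follow from ``deeper input of the present paper'': producing a single, globally defined Lie sphere transformation $T\in \mathrm{O}(n+2,2)/\{\pm 1\}$ under which $T(M^n)$ has constant M\"{o}bius (or Laguerre) curvatures. Nothing in the paper supplies this. The fibrewise $\mathrm{PGL}(2,\mathbb{R})$ normalization of the curvature-sphere pencil to send three curvature spheres to $0,1,\infty$ assigns a point-dependent gauge $p\mapsto T_p$, and an ambient Lie sphere transformation does \emph{not} induce a fixed M\"{o}bius map on the varying pencils $\{[\lambda_1(p),\dots,\lambda_r(p)]\}$: it acts in a way that depends on the curvature-sphere congruence itself. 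Whether the pointwise gauge integrates to a single ambient $T$ is, modulo the paper's Theorems~\ref{th1}--\ref{th2}, \emph{equivalent} to Conjectures~\ref{c1}--\ref{c2}, so your strategy is circular at exactly the place it matters. Put differently: constant Lie curvatures (cross-ratio invariance under $\mathrm{PGL}(2)$) is a genuinely weaker datum than constant M\"{o}bius curvatures (invariance only under affine $\lambda\mapsto a\lambda+b$), and it is an open problem whether every compact proper Dupin hypersurface with the former condition is Lie equivalent to one with the latter. The flatness/Codazzi/isoparametric-tensor machinery of Sections~\ref{cone-char}, \ref{proof-th1}, \ref{proof-th2} and Appendix~A is formulated entirely in terms of the M\"{o}bius (resp.\ Laguerre) invariants $B,A,C$ (resp.\ $\mathbb{B},\mathbb{L},\mathbb{C}$), which are \emph{not} Lie-invariant, so this machinery cannot be invoked before the reduction is accomplished, and it gives no mechanism for accomplishing it.

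Two further remarks. First, even granting the normalization, the elimination of the cone alternative does not need Pinkall's compactness observation about tube/cone constructions: Corollary~\ref{cor2} already handles the compact case directly, since a cone is non-compact and M\"{o}bius equivalence is a diffeomorphism. Second, the claim that cones over isoparametric hypersurfaces ``are Lie equivalent to an isoparametric hypersurface in a sphere'' is false in general; cones are reducible Dupin hypersurfaces, not isoparametric ones, and this is exactly why the compactness hypothesis is essential in Corollary~\ref{cor2}. The proposal correctly understands the relation among Lie, M\"{o}bius, and Laguerre curvatures and correctly sees that the paper's theorems would finish the argument after a normalization, but the normalization step is asserted rather than proved, and it is the whole difficulty.
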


Analogously, on local classifications of irreducible connected proper Dupin hypersurfaces, Cecil, Chi and Jensen \cite{cecil7}
proposed the following conjecture.

\begin{Conjecture}\label{c2} (\cite{cecil7}) If $M^n$ is an irreducible connected proper Dupin hypersurface with four  principal curvatures having respective multiplicities $m_1,m_2,m_3,m_4$ and constant Lie curvature, then $m_1=m_2,~m_3=m_4$, and $M^n$ is Lie equivalent to an isoparametric hypersurface in a sphere.
\end{Conjecture}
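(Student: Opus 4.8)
The plan is to reduce Conjecture~\ref{c2} to the M\"{o}bius and Laguerre classifications announced in the abstract. The guiding point is that the M\"{o}bius curvatures of a hypersurface $M^n$ with distinct principal curvatures $\lambda_1>\cdots>\lambda_r$ are the conformally invariant ratios $(\lambda_i-\lambda_k)/(\lambda_j-\lambda_k)$, the invariance coming from the transformation law $\widetilde{\lambda}_i-\widetilde{\lambda}_j=e^{-\sigma}(\lambda_i-\lambda_j)$ of the trace-free shape operator under a conformal change $e^{2\sigma}$ of the metric; hence for $r=4$ there are exactly two functionally independent M\"{o}bius curvatures, whereas there is only one functionally independent Lie curvature, a cross-ratio $\Psi$ of $\lambda_1,\dots,\lambda_4$. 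Thus $\Psi\equiv\mathrm{const}$ leaves precisely one M\"{o}bius curvature undetermined, and the gap matches the dimensional excess of the Lie sphere group over the M\"{o}bius group. Accordingly the strategy has four parts: (i) use the extra freedom in the Lie sphere group, the Dupin condition, and irreducibility to move $M^n$ by a Lie sphere transformation to a hypersurface with \emph{constant} M\"{o}bius curvatures (or, in a parallel branch, constant Laguerre curvatures); (ii) invoke the corresponding classification theorem; (iii) use irreducibility again to discard the reducible models that appear in its conclusion; (iv) read off the multiplicities.

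For step (i) I would argue in the Lie sphere model. The Legendre lift $\lambda\colon M^n\to\Lambda^{2n+1}$ carries four curvature-sphere maps $K_1,\dots,K_4\colon M^n\to Q^{n+1}$ into the Lie quadric which, at each point $p$, lie on a projective line $\ell(p)\subset Q^{n+1}$; the Lie curvature is the cross-ratio of these four points on the moving line $\ell(p)$, so $\Psi\equiv\mathrm{const}$ means that in a Lie frame adapted to $\ell(p)$ the four curvature spheres have constant coordinates and only $\ell(p)$ varies. Reduction to M\"{o}bius geometry amounts to fixing a point-sphere complex, i.e.\ a timelike direction $\xi$; the induced M\"{o}bius curvature at $p$ is then the cross-ratio of $K_1(p),K_2(p),K_3(p)$ with the point $\ell(p)\cap\xi^{\perp}$, a single scalar $\tau_\xi$ on $M^n$, and the reduction succeeds precisely when some \emph{global} $\xi$ makes $\tau_\xi$ constant; reduction to Laguerre geometry is identical with $\xi$ replaced by a lightlike vector $\eta\in Q^{n+1}$. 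The main work --- and the main obstacle --- is to show, from the structure equations of the Legendre lift and the Dupin condition (which freezes each $K_i$ along its own curvature surface and rigidly controls the variation of $\ell(p)$ in the transverse directions), that such a $\xi$ or such an $\eta$ always exists, and that irreducibility excludes the degenerate configurations in which the normalization breaks down. The $r=6$ version of this step, needed for Conjecture~\ref{c1}, runs along the same lines with more curvature spheres and cross-ratios to control.

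Granting step (i), the remainder is short. If $M^n$ has constant M\"{o}bius curvatures, the M\"{o}bius theorem of the abstract makes $M^n$ M\"{o}bius --- hence Lie --- equivalent to an isoparametric hypersurface in $\mathbb{S}^{n+1}$ or to a cone over an isoparametric hypersurface in a sphere; a cone is by definition reducible (it is one of Pinkall's four basic constructions \cite{pink2}), and since a hypersurface Lie equivalent to a reducible one is itself reducible, the cone alternative contradicts the irreducibility of $M^n$ and is discarded. If $M^n$ has constant Laguerre curvatures, the Laguerre theorem makes $M^n$ Laguerre --- hence Lie --- equivalent to a flat Laguerre isoparametric hypersurface; these are again obtained by Pinkall's cone/cylinder constructions over isoparametric hypersurfaces in spheres, hence reducible, so this branch too is impossible for irreducible $M^n$. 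Therefore $M^n$ is Lie equivalent to an isoparametric hypersurface $N\subset\mathbb{S}^{n+1}$, and $N$ has exactly four distinct principal curvatures because a Lie sphere equivalence preserves the number of curvature spheres.

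Finally I would extract $m_1=m_2$ and $m_3=m_4$. By M\"{u}nzner's structure theorem \cite{mu1}, an isoparametric hypersurface with four principal curvatures has, in the angular ordering of its curvature spheres, principal curvatures $\cot(\theta+k\pi/4)$ ($k=0,1,2,3$) and multiplicities satisfying $n_k=n_{k+2}$, so its multiplicities form two equal pairs and its Lie curvature is a fixed constant, independent of $\theta$ and of the multiplicities. The Lie equivalence $\Phi\colon M^n\to N$ induces, $M^n$ being connected, a fixed bijection between the curvature-sphere maps of $M^n$ and those of $N$ which preserves multiplicities and the Lie curvature; since $\Psi$ is Lie invariant its constant value on $M^n$ is the isoparametric one, and this bijection carries M\"{u}nzner's pairing of the curvature spheres of $N$ to the labeling of those of $M^n$ in which the Lie curvature is defined (Miyaoka's convention \cite{miya2}). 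Under that labeling the multiplicities of $M^n$ are precisely those of $N$ with the two equal pairs in adjacent positions, i.e.\ $m_1=m_2$ and $m_3=m_4$, which completes the proof of Conjecture~\ref{c2}.
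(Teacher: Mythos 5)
The statement you have been asked to prove is Conjecture~\ref{c2} of Cecil--Chi--Jensen, which the paper records but does \emph{not} prove: the paper's theorems (Theorems~\ref{th1}, \ref{th2}, and Corollaries~\ref{cor2}, \ref{cor3}) all assume the strictly stronger hypothesis that the M\"{o}bius (resp.\ Laguerre) curvatures are constant, not merely the Lie curvature, which is exactly why the authors say their work ``solve[s] the major issues related to'' the conjectures rather than the conjectures themselves. Your steps~(ii)--(iv) correctly describe how one would finish once constant M\"{o}bius or Laguerre curvatures were in hand: invoke Theorem~\ref{th1} or Theorem~\ref{th2}, discard the cone and flat Laguerre models by irreducibility (both are Pinkall constructions, hence reducible), and read off the paired multiplicities from M\"{u}nzner's structure theorem. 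That is essentially the logic behind Corollary~\ref{cor3}.

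However, your step~(i) is a genuine gap, and you flag it yourself as ``the main work --- and the main obstacle'' without supplying an argument. Constancy of the single Lie curvature is one scalar constraint, while constancy of both M\"{o}bius curvatures (for $r=4$) is required for Theorem~\ref{th1}; bridging this asks for a single global timelike direction $\xi\in\mathbb{R}^{n+4}_2$ (or a lightlike $\eta$) that makes the induced function $\tau_\xi$ constant on all of $M^n$. The dimension count comparing the Lie sphere group to the M\"{o}bius group is beside the point: one group element cannot be expected to kill the variation of a \emph{function} on $M^n$ unless the structure equations force that function into an overdetermined system, and you have not shown that the Dupin condition, constant Lie curvature, and irreducibility together achieve this. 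Cecil--Chi--Jensen in \cite{cecil6} carry out precisely such a normalization, but only under the additional hypothesis $m_3=m_4=1$; in general the reduction is open. Without step~(i) the proposal does not prove Conjecture~\ref{c2}, and the paper contains no proof of it against which to compare your argument.
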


In \cite{cecil6}, Cecil, Chi and Jensen have verified both Conjecture \ref{c1} and Conjecture \ref{c2} for Dupin hypersurfaces with four principal curvatures and
multiplicities
$m_1=m_2\geq 1,~m_3=m_4=1$. In this paper we will shed different lights on Conjecture \ref{c1} and Conjecture \ref{c2}.
More precisely, we will consider M\"{o}bius (Laguerre) curvatures instead of Lie curvatures and show Conjecture \ref{c1} and Conjecture \ref{c2} hold
in stronger ways where Lie equivalence is replaced by M\"{o}bius (Laguerre) equivalence respectively.

For an oriented hypersurface $M^n$ with $r (\geq 3)$ distinct principal curvatures $\lambda_1,\cdots,\lambda_r$, the M\"{o}bius curvatures are defined by
$$\mathbb{M}_{ijs}=\frac{\lambda_i-\lambda_j}{\lambda_i-\lambda_s}.$$
It is known that the M\"{o}bius curvatures $\mathbb{M}_{ijs}$ are invariant under the M\"{o}bius transformations but not  under Lie sphere transformations in general
(cf. \cite{miya2}). Obviously, Lie curvatures are products of two M\"{o}bius curvatures
$$\Psi_{ijst}=\mathbb{M}_{ijs} \mathbb{M}_{tsj}$$
and therefore M\"{o}bius curvatures are finer than Lie curvatures.  On a seemingly different thread,  Hu, Li, and Wang \cite{hu1,hu2,hu3} classified the hypersurfaces with vanishing M\"{o}bius form and constant M\"{o}bius principal curvatures, which are called M\"{o}bius isoparametric hypersurfaces, provided that the dimension of the hypersurface or the
number of distinct principal curvatures is small.
Until Rodrigues and Tenenblat \cite{rod} observed that an oriented hypersurface is a Dupin hypersurface with constant M\"{o}bius curvatures if and only if it is a
M\"{o}bius isoparametric hypersurface.

Our first main result is the following classification theorem:

\begin{Theorem}\label{th1}
Let $M^n$ be a Dupin hypersurface in $\mathbb{R}^{n+1}$ with $r (\geq 3)$ distinct principal curvatures. If the  M\"{o}bius curvatures are constant, then locally $M^n$
is M\"{o}bius equivalent to one of the following hypersurfaces: \\
(1) the image of the stereograph projection of an isoparametric hypersurface in $\mathbb{S}^{n+1}$;\\
(2) a cone over an isoparametric hypersurface in $\mathbb{S}^k\subset \mathbb{R}^{k+1}\subset\mathbb{R}^{n+1}$.
\end{Theorem}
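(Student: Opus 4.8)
The plan is to set up the Möbius geometry machinery for hypersurfaces, extract the structure equations of the Möbius invariants, and then analyze the case where the Möbius curvatures are constant by studying the resulting integrable distributions. Concretely, I would first recall the Möbius frame formalism: for an immersed hypersurface $M^n\subset\mathbb{R}^{n+1}$ one associates the Möbius position vector $Y$ in the Lorentz space $\mathbb{R}^{n+3}_1$, the Möbius metric $g$, the Blaschke tensor $A$, the Möbius second fundamental form $B$, and the Möbius form $\Phi$. Since $M^n$ is Dupin with constant Möbius curvatures, by the Rodrigues–Tenenblat observation it is Möbius isoparametric, so $\Phi\equiv 0$ and the Möbius principal curvatures $b_1,\dots,b_r$ (the eigenvalues of $B$ with respect to $g$) are constant. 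The integrability (Gauss and Codazzi) equations of Möbius geometry then force the Blaschke tensor $A$ to be simultaneously diagonalizable with $B$, and the relation $\sum m_i b_i = 0$, $\sum m_i b_i^2 = \frac{n-1}{n}$ holds, where $m_i$ is the multiplicity of $b_i$.

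Next I would exploit the eigen-distributions $D_i = \ker(B - b_i g)$. Because the $b_i$ are constant and $\Phi=0$, the Codazzi-type equations imply each $D_i$ is integrable with totally umbilic (in fact, when $m_i\ge 2$, totally geodesic-like) leaves, and the connection coefficients between distinct distributions are governed entirely by the differences $b_i - b_j$, hence are "constant" in the appropriate frame-theoretic sense. The key dichotomy comes from examining the Blaschke eigenvalues $a_i$ on each $D_i$ together with the scalar function that measures how the Möbius position vector $Y$ sits relative to a fixed constant vector in $\mathbb{R}^{n+3}_1$: one shows that the "conformal Gauss map" or equivalently the mean curvature sphere congruence is either a constant vector (spacelike/timelike/lightlike), which pins down whether $M^n$ lies, up to Möbius transformation, on a round sphere $\mathbb{S}^{n+1}$ — giving case (1) after stereographic projection to an isoparametric hypersurface — or else a certain lightlike constant vector appears, forcing $M^n$ to be Möbius equivalent to a cylinder/cone configuration, and then a further normalization identifies it with a cone over an isoparametric hypersurface in a lower-dimensional sphere, giving case (2).

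More precisely, for the reduction I would look for a constant vector $C\in\mathbb{R}^{n+3}_1$ in the span of $\{Y, Y_\xi, E_1,\dots,E_n, \xi\}$ (the Möbius frame) annihilated by the structure equations; the constancy of the $b_i$ and vanishing of $\Phi$ make the differential system for such a $C$ overdetermined but consistent, and the signature/nullity of $C$ (together with whether $\langle C, Y\rangle$ is a nonzero constant or vanishes identically) distinguishes the two cases. When $C$ is nondegenerate one gets that the original hypersurface is (Möbius-equivalent to) a hypersurface in a space form with constant principal curvatures — an isoparametric hypersurface — and since it has $r\ge 3$ distinct principal curvatures it must be the spherical case, yielding (1). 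When $C$ is lightlike one falls into the cone case: the hypersurface splits as $\mathbb{R}^{n-k}\times N^k$ with $N^k$ isoparametric in $\mathbb{S}^k$, which is exactly the cone described in (2). Throughout, a separate bookkeeping argument with the constant Möbius curvatures $\mathbb{M}_{ijs} = (b_i-b_j)/(b_i-b_s)$ (note they coincide with the classical Möbius curvatures since Möbius principal curvatures are affine images of Euclidean ones) ensures no third possibility sneaks in.

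The main obstacle I anticipate is showing that the overdetermined linear system for the constant vector $C$ is actually integrable — i.e., verifying that the constancy of the Möbius principal curvatures together with $\Phi\equiv 0$ is \emph{enough} to make the Blaschke tensor $A$ have the rigid block form needed (eigenvalues $a_i$ on $D_i$ with the $a_i$ determined algebraically by the $b_i$), and that the off-diagonal parts of $A$ vanish. This amounts to a careful exploitation of the Codazzi equations for $A$ and $B$ jointly, tracking the frame rotation coefficients $\Gamma^k_{ij}$ between the distributions; the subtlety is that when some multiplicity $m_i=1$ the usual "totally geodesic leaves" argument weakens, so one must handle the $1$-dimensional curvature distributions with extra care, precisely as in the classical Dupin theory. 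Once this rigidity of $A$ is in hand, the case division by the causal character of $C$ and the final identification with isoparametric hypersurfaces (via Cartan–Münzner theory for $r\in\{3,4,6\}$ in the spherical case) is essentially bookkeeping.
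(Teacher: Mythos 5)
Your setup (Möbius frame, Rodrigues--Tenenblat characterization, constancy of the Blaschke eigenvalues) matches the paper's, and you correctly flag the main difficulty: forcing a rigid relation between the Blaschke tensor $A$ and the Möbius second fundamental form $B$. But the mechanism you propose for resolving it --- find a single constant vector $C$ in $\mathbb{R}^{n+3}_1$ and split on its causal character --- does not produce the correct dichotomy, and this is where the proposal has a genuine gap.

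The paper's Theorem~\ref{redu3} establishes that either (a) $A = \lambda B + \mu g$ for constants $\lambda, \mu$, or (b) $(M^n, g)$ is locally a Riemannian product with a specific block structure for $A$ and $B$. Only in case (a) is there a single constant vector: one checks from the structure equations (with $C$-form vanishing) that $N + \lambda\xi - \mu Y$ is constant, of norm $\lambda^2 - 2\mu$, and its causal character decides whether the hypersurface lies in $\mathbb{H}^{n+1}$, $\mathbb{R}^{n+1}$, or $\mathbb{S}^{n+1}$; combined with $r \geq 3$ this forces the spherical isoparametric case (1). In case (b) there is \emph{no} single constant vector; the constant object is an orthogonal direct-sum decomposition $\mathbb{R}^{n+3}_1 = V \oplus V^\perp$ with $V$ spacelike of dimension $k+2$ and $V^\perp$ Lorentzian of dimension $n-k+1$ (cf.\ the frame \eqref{frame} and Theorem~\ref{cone-type}). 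The Möbius position vector then decomposes as $Y = \tfrac{1}{\sqrt{-K}}(T, P)$ with $T$ in $\mathbb{H}^{n-k}$ and $P$ in $\mathbb{S}^{k+1}$, which is precisely the cone structure. A lightlike constant vector in your framework would instead point to a hypersurface in flat $\mathbb{R}^{n+1}$ with constant mean and scalar curvature, which under $r \geq 3$ cannot occur; it does not give the cone of conclusion (2). So conflating ``lightlike $C$'' with the cone case is an error.

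More seriously, you say the rigidity of $A$ ``amounts to a careful exploitation of the Codazzi equations for $A$ and $B$ jointly,'' but this sentence names the problem rather than solving it. Even after proving that $A$ has constant eigenvalues (so that $A$ and $B$ are commuting isoparametric tensors), it is not true in general that commuting isoparametric tensors satisfying the Gauss-type relation \eqref{equa4} are either linearly dependent or arise from a product splitting --- that is the content of the paper's Section~4, which is a substantial combinatorial argument. The proof plots the pairs $(a_i, b_i)$ in the plane, studies the lines $S_{(i)}(\varepsilon)$ through them, and uses the generalized Cartan identity \eqref{tensor4} together with sign constraints from \eqref{gauss1} (Lemmas~\ref{le2}--\ref{le4} and Theorems~\ref{case2}, \ref{case1}, \ref{case3}) to rule out every other configuration. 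Without this argument, or a replacement for it, the dichotomy is unproven and the constant-vector endgame you propose cannot start. One smaller inaccuracy: in the reducible case the Möbius metric splits as $\mathbb{H}^{n-k}\times M^k$, not $\mathbb{R}^{n-k}\times N^k$, and the cone introduces an additional null principal curvature, which is why Corollary~\ref{cor1} allows $r$ up to $7$ rather than just $r\in\{3,4,6\}$.
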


A Dupin hypersurface with constant M\"{o}bius curvatures turns out to be proper
(cf. Corollary \ref{corm}). As a consequence of the constraint on the number of distinct principal curvatures for isoparametric hypersurfaces established by M\"{u}nzner (\cite{mu1,mu2}),
we may conclude

\begin{Corollary}\label{cor1}
Let $M^n$ be a Dupin hypersurface in $\mathbb{R}^{n+1}$ with $r (\geq 3)$ distinct principal curvatures. If the M\"{o}bius curvatures are constant, then
$r=3, 4, 5, 6, 7.$
\end{Corollary}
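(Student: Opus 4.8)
The plan is to deduce this directly from Theorem \ref{th1} together with the range restriction on the number of distinct principal curvatures of isoparametric hypersurfaces in spheres. By Theorem \ref{th1}, since the M\"{o}bius curvatures are constant, locally $M^n$ is M\"{o}bius equivalent either to the stereographic image of an isoparametric hypersurface $\widetilde{M}^n\subset\mathbb{S}^{n+1}$, or to a cone over an isoparametric hypersurface $\widetilde{M}^{k-1}\subset\mathbb{S}^k\subset\mathbb{R}^{k+1}\subset\mathbb{R}^{n+1}$. The key observation I would make first is that in either case the number $r$ of distinct principal curvatures of $M^n$ is controlled by the number $\tilde r$ of distinct principal curvatures of the relevant isoparametric hypersurface in the sphere.

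The two cases require a short bookkeeping argument each. In case (1), stereographic projection is a conformal (hence M\"{o}bius) diffeomorphism, so it carries curvature surfaces to curvature surfaces and preserves the number of distinct principal curvatures; thus $r=\tilde r$. By M\"{u}nzner's theorem (\cite{mu1,mu2}), $\tilde r\in\{1,2,3,4,6\}$, and since we assume $r\geq 3$, we get $r\in\{3,4,6\}$. In case (2), let $\widetilde{M}^{k-1}\subset\mathbb{S}^k$ be isoparametric with $\tilde r$ distinct principal curvatures; the cone $C(\widetilde{M}^{k-1})\subset\mathbb{R}^{n+1}$ over it (a subset of $\mathbb{R}^{k+1}\subset\mathbb{R}^{n+1}$, taken as a hypersurface of $\mathbb{R}^{n+1}$ so that the $n-k$ flat directions contribute a single principal curvature $0$) has principal curvatures consisting of the $\tilde r$ principal curvatures inherited from the cone structure over $\widetilde{M}^{k-1}$ (these are the principal curvatures of $\widetilde{M}^{k-1}$ rescaled by the radial coordinate, still $\tilde r$ distinct values, one of which may or may not be $0$) together with the principal curvature $0$ coming from the rulings of the cone and from the extra flat $\mathbb{R}^{n-k}$ factor. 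A case check on whether $0$ already occurs among the cone directions shows that $r$ equals either $\tilde r$ or $\tilde r+1$. Again $\tilde r\in\{1,2,3,4,6\}$ by M\"{u}nzner, and combining with $r\geq 3$ yields $r\in\{3,4,5,6,7\}$.

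Taking the union of the possibilities from the two cases gives $r\in\{3,4,5,6,7\}$, which is the assertion. The only slightly delicate point — and the step I would be most careful about — is the precise count of distinct principal curvatures for the cone in case (2): one must check whether the principal curvature $0$ arising from the ruling directions coincides with one of the principal curvatures coming from $\widetilde{M}^{k-1}$, and confirm that when it does not coincide we genuinely pick up the value $r=\tilde r+1$ (so that $\tilde r=6$ really can produce $r=7$), while a sphere in the family ($\tilde r=1$, a single nonzero principal curvature) produces $r=2<3$ and is excluded by hypothesis. Everything else is immediate from Theorem \ref{th1} and M\"{u}nzner's classification.
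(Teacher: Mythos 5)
Your proof is correct and matches the intended argument, which the paper leaves implicit: combine Theorem \ref{th1} with M\"{u}nzner's restriction $\tilde r\in\{1,2,3,4,6\}$, noting that a cone contributes at most one additional distinct principal curvature (the value $0$ from the ruling and flat directions), so $r\in\{\tilde r,\tilde r+1\}$, and intersecting with $r\geq 3$ gives $\{3,4,5,6,7\}$. The only minor remark is that your final worry about whether $\tilde r=6$ genuinely produces $r=7$ concerns sharpness rather than the stated inclusion, so it is not actually needed to establish the corollary as written.
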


As argued in \cite{cecil6} based on the analyticity of Dupin hypersurfaces established in \cite{cecil6.5}, we can use Theorem \ref{th1} to solve some major issues
related  Conjecture \ref{c1} and Conjecture \ref{c2} on the classification of Dupin hypersurfaces.

\begin{Corollary}\label{cor2}
Let $M^n$ be a compact connected Dupin hypersurface with $r (\geq 3)$ distinct principal curvatures. Then
$M^n$ is M\"{o}bius equivalent to an isoparametric hypersurface in $\mathbb{S}^{n+1}$ if and only if its M\"{o}bius curvatures are all constant.
\end{Corollary}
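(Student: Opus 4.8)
The plan is to combine Theorem \ref{th1} with the analyticity of Dupin hypersurfaces and M\"unzner's topological constraints on isoparametric hypersurfaces. The ``only if'' direction is immediate: if $M^n$ is M\"obius equivalent to an isoparametric hypersurface in $\mathbb{S}^{n+1}$, then the principal curvatures of the isoparametric model are constant, so all M\"obius curvatures $\mathbb{M}_{ijs}$ are constant, and since M\"obius curvatures are M\"obius invariants, the same holds for $M^n$. The real content is the ``if'' direction, and here the compactness hypothesis is essential because Theorem \ref{th1} only gives a \emph{local} conclusion with two alternatives, one of which (the cone over an isoparametric hypersurface in a lower-dimensional sphere) is never compact.

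First I would invoke Theorem \ref{th1} together with Corollary \ref{corm} to conclude that $M^n$ is proper Dupin and that, in a neighborhood of each point, $M^n$ is M\"obius equivalent to either (1) the stereographic image of an isoparametric hypersurface in $\mathbb{S}^{n+1}$, or (2) a cone over an isoparametric hypersurface in some $\mathbb{S}^k\subset\mathbb{R}^{k+1}\subset\mathbb{R}^{n+1}$. The next step is to rule out alternative (2) globally: a cone over an isoparametric hypersurface in $\mathbb{S}^k$ has a singularity at the vertex of the cone and, away from the vertex, is noncompact (it is invariant under dilations of $\mathbb{R}^{n+1}$), so it cannot be an open subset of any compact hypersurface. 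Since M\"obius transformations of $\mathbb{R}^{n+1}\cup\{\infty\}$ are diffeomorphisms of $\mathbb{S}^{n+1}$, the image of a neighborhood of a point of a compact hypersurface under such a transformation is again a relatively open piece of a hypersurface with bounded geometry; matching this against a cone forces a contradiction. Hence near every point $M^n$ falls into case (1), i.e.\ is locally M\"obius equivalent to (the stereographic image of) an isoparametric hypersurface in $\mathbb{S}^{n+1}$.

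The passage from ``locally M\"obius equivalent at each point'' to ``globally M\"obius equivalent'' is where I would follow the argument of Cecil, Chi and Jensen in \cite{cecil6}, using the real-analyticity of proper Dupin hypersurfaces established in \cite{cecil6.5}. The point is that both $M^n$ and each isoparametric model are real-analytic, and a M\"obius transformation carrying a connected open piece of $M^n$ onto an open piece of a fixed isoparametric hypersurface is determined by its behavior on that piece; by analytic continuation along paths in the connected compact manifold $M^n$, the local M\"obius congruences patch together, and the monodromy is trivial because the group of M\"obius transformations is a Lie group acting analytically. This yields a single M\"obius transformation taking all of $M^n$ onto an open subset of an isoparametric hypersurface $\widetilde{M}$ in $\mathbb{S}^{n+1}$; since $M^n$ is compact and $\widetilde{M}$ is connected, the image is all of $\widetilde{M}$, so $M^n$ is M\"obius equivalent to $\widetilde{M}$.

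The main obstacle I anticipate is precisely this globalization step: Theorem \ref{th1} is stated only locally, and one must be careful that the local M\"obius models glued along overlaps are compatible as maps into $\mathbb{S}^{n+1}$, not merely abstractly isometric. The analyticity from \cite{cecil6.5} together with the rigidity of M\"obius geometry (a M\"obius transformation of $\mathbb{S}^{n+1}$ is determined by its $\infty$-jet, indeed by its restriction to any open set) is what makes the analytic-continuation / monodromy argument work, and the fact that $M^n$ is simply the Dupin hypersurface itself (rather than a covering) means no genuinely global topological obstruction survives. A secondary point worth checking is that the isoparametric hypersurface produced this way is connected and embedded; this follows because isoparametric hypersurfaces in $\mathbb{S}^{n+1}$ are, up to congruence, the level sets of a Cartan--M\"unzner polynomial and are automatically connected and compact, matching $M^n$.
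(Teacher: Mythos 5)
Your overall plan matches the paper's stated strategy: the paper does not spell out a proof of this corollary but says ``as argued in \cite{cecil6} based on the analyticity of Dupin hypersurfaces established in \cite{cecil6.5}, we can use Theorem \ref{th1}\dots'', and you correctly identify the ingredients (Theorem \ref{th1}, properness from Corollary \ref{corm}, analyticity, compactness). The ``only if'' direction is fine.

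There is, however, a genuine gap in how you rule out alternative~(2). You attempt to exclude the cone case \emph{before} globalizing, arguing that ``the image of a neighborhood of a point of a compact hypersurface under such a transformation is again a relatively open piece of a hypersurface with bounded geometry; matching this against a cone forces a contradiction.'' This step does not hold up: a piece of a cone away from the vertex is a perfectly smooth, locally bounded-geometry hypersurface, and there is no local obstruction to an open subset of a compact hypersurface being M\"obius equivalent to such a piece (M\"obius maps do not preserve any notion of bounded geometry anyway --- inversion sends small pieces near the origin to large pieces near infinity). The non-compactness and the vertex singularity of the cone are \emph{global} features; they cannot be detected on a local piece, which is all that Theorem \ref{th1} hands you.

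The correct order of operations is the reverse of yours: first globalize, then use compactness. By \cite{cecil6.5} the proper Dupin hypersurface $M^n$ is real-analytic, and so are the two model hypersurfaces from Theorem \ref{th1}; by Wang's theorem the pair $(g,B)$ determines a hypersurface up to M\"obius transformation, so a local agreement of M\"obius data propagates by analytic continuation over the connected $M^n$. Thus $M^n$ is M\"obius congruent, as a whole, to (a covering of) the single analytic model it matches near any one point. Only at this stage does compactness enter: the analytic extension of the cone (case~(2)) is dilation-invariant and never compact, contradiction; hence the model must be the isoparametric hypersurface of case~(1), and $M^n$ covers, hence equals, it. Your write-up draws this conclusion (``Hence near every point $M^n$ falls into case~(1)'') one step too early, on the strength of an argument that does not exclude the cone locally.
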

and
\begin{Corollary}\label{cor3}
Let $M^n$ be an irreducible connected Dupin hypersurface with $r (\geq 3)$ distinct principal curvatures. Then
$M^n$ is M\"{o}bius equivalent to an isoparametric hypersurface in $\mathbb{S}^{n+1}$ if and only if its M\"{o}bius curvatures are all constant.
\end{Corollary}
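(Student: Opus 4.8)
The plan is to reduce the irreducible case to Theorem \ref{th1} together with the analyticity input of \cite{cecil6.5} and the reducibility criteria recalled in the introduction, exactly in the spirit of the argument in \cite{cecil6} for Corollary \ref{cor2}. First I would dispose of the easy direction: if $M^n$ is M\"{o}bius equivalent to an isoparametric hypersurface in $\mathbb{S}^{n+1}$, then its principal curvatures are constant, hence certainly the M\"{o}bius curvatures $\mathbb{M}_{ijs}=(\lambda_i-\lambda_j)/(\lambda_i-\lambda_s)$ are constant, and this property is M\"{o}bius invariant by the discussion after Conjecture \ref{c2} (citing \cite{miya2}); so the ``only if'' direction is immediate.

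For the substantive ``if'' direction, assume the M\"{o}bius curvatures of $M^n$ are all constant. By Corollary \ref{corm}, $M^n$ is automatically proper Dupin, so the number $r$ of distinct principal curvatures is a well-defined constant and $r \geq 3$; by Corollary \ref{cor1} we moreover know $r \in \{3,4,5,6,7\}$, though this will not really be needed. Now apply Theorem \ref{th1}: locally $M^n$ is M\"{o}bius equivalent to either (1) the stereographic image of an isoparametric hypersurface in $\mathbb{S}^{n+1}$, or (2) a cone over an isoparametric hypersurface in some $\mathbb{S}^k \subset \mathbb{R}^{k+1} \subset \mathbb{R}^{n+1}$ with $k < n$. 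In case (1) the conclusion is exactly what we want, so the whole point is to rule out case (2) under the hypothesis of irreducibility. The cone construction is one of Pinkall's four basic reducibility constructions (tubes, cylinders, cones, surfaces of revolution) described in the introduction following \cite{pink2}; hence a hypersurface which is locally M\"{o}bius equivalent --- a fortiori Lie equivalent --- to such a cone is by definition reducible. Since $M^n$ is assumed irreducible, case (2) cannot occur, and we are forced into case (1).

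There is one gap to close carefully, and it is where the analyticity enters. Theorem \ref{th1} is a purely \emph{local} statement, so a priori different points of $M^n$ could land in different cases, or in case (1) with non-isometric isoparametric models. This is precisely the issue addressed in \cite{cecil6} via \cite{cecil6.5}: a connected proper Dupin hypersurface with constant M\"{o}bius curvatures is real-analytic (after a Lie, in fact M\"{o}bius, normalization), so the local M\"{o}bius-equivalence type propagates by analytic continuation and is constant along $M^n$; consequently a connected irreducible such hypersurface is globally of type (1), i.e. M\"{o}bius equivalent to a single isoparametric hypersurface of $\mathbb{S}^{n+1}$. I would invoke this rigidity/propagation argument verbatim as in \cite{cecil6}, noting that our hypothesis is stronger (constant M\"{o}bius curvatures rather than merely constant Lie curvatures), so nothing new is required. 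The main --- indeed only --- obstacle is this globalization step: one must be sure that irreducibility, which is a statement about the whole hypersurface, together with analyticity genuinely upgrades the local dichotomy of Theorem \ref{th1} to a global classification; granting the analytic-continuation machinery of \cite{cecil6,cecil6.5}, the rest is immediate.
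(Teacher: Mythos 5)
Your proposal is correct and follows essentially the route the paper intends: the paper itself does not spell out a separate proof of Corollary~\ref{cor3} but simply invokes Theorem~\ref{th1} together with the analyticity argument of \cite{cecil6,cecil6.5}, exactly as you do --- using Theorem~\ref{th1} for the local dichotomy, observing that the cone alternative is one of Pinkall's reducibility constructions and hence excluded by irreducibility, and then appealing to analyticity to upgrade the local M\"{o}bius equivalence in case~(1) to a global one. Your flagging of the local-to-global gap as the only nontrivial step matches the paper's deferral to \cite{cecil6}.
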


For an oriented hypersurface $M^n$ in $\mathbb{R}^{n+1}$ with non-vanishing principal curvatures $\lambda_1,\lambda_2,\cdots,\lambda_r$,
let $R_i=\frac{1}{\lambda_i}$ be the curvature radius. Then one can define the Laguerre curvatures of $M^n$ as
$$\Upsilon_{ijs}=\frac{R_i-R_j}{R_i-R_s}.$$
It is clear that $\Upsilon_{ijk}$ are invariant under Laguerre transformations in the light of \eqref{lag-cur-inv} in Section \ref{lag-inv}.
Again, obviously, the Lie curvature is a product of  two Laguerre curvatures
$$\Psi_{ijst}=\Upsilon_{ijs}\Upsilon_{tsj}$$
and therefore Laguerre curvatures are finer than Lie curvatures. Analogous to M\"{o}bius cases,
it turns out, as expected, an oriented hypersurface is a Dupin hypersurface with constant Laguerre curvatures if and only if its Laguerre form vanishes and its Laguerre principal
curvatures are all constant (see Proposition \ref{pro1} and also \cite{mtenen}).

Our second main theorem is the following result:

\begin{Theorem}\label{th2}
Let $M^n$ be a  Dupin hypersurface in $\mathbb{R}^{n+1}$ with $r (\geq 3)$ distinct non-vanishing principal curvatures. Then $M^n$
is Laguerre equivalent to flat Laguerre  isoparametric hypersurface in $ \mathbb{R}^{n+1}$ if and only if the Laguerre curvatures are all constant.
\end{Theorem}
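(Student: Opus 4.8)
The plan is to handle the two implications separately, deriving the statement from Proposition \ref{pro1} together with an analysis of the Laguerre structure equations that runs parallel to the argument behind Theorem \ref{th1}. The ``only if'' implication is the routine half: if $M^n$ is Laguerre equivalent to a flat Laguerre isoparametric hypersurface $\widetilde M^n$, then $\widetilde M^n$ has, by definition, vanishing Laguerre form and constant Laguerre principal curvatures $b_1,\dots,b_r$; since the Laguerre principal curvatures and the curvature radii $R_i=1/\lambda_i$ agree up to a common positive factor, so that $b_i-b_j$ is a fixed multiple of $R_i-R_j$, the cross ratios $\Upsilon_{ijs}=(R_i-R_j)/(R_i-R_s)=(b_i-b_j)/(b_i-b_s)$ are constant on $\widetilde M^n$; and because the $\Upsilon_{ijs}$ are Laguerre invariants (Section \ref{lag-inv}), they are then constant on $M^n$ as well.

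For the ``if'' direction I would first invoke Proposition \ref{pro1}: a Dupin hypersurface with $r\ge 3$ distinct non-vanishing principal curvatures and constant Laguerre curvatures is Laguerre isoparametric, i.e.\ its Laguerre form $C$ vanishes identically and its Laguerre second fundamental form $B$ has constant eigenvalues $b_1,\dots,b_r$ with constant multiplicities $m_1,\dots,m_r$ (in particular one works locally on a set where the number $r$ of distinct principal curvatures is constant, which is itself forced by the hypotheses). Fix a local orthonormal frame $\{E_\alpha\}$ for the Laguerre metric $g$ adapted to the eigendistributions $T_1,\dots,T_r$ of $B$. The Dupin condition together with the constancy of the $b_i$ makes the $T_i$ integrable and pins down the off-diagonal connection coefficients $\omega_{\alpha\beta}$; feeding these into the Codazzi equation for the Laguerre tensor $L$ shows that $L$ is simultaneously diagonalizable with $B$, hence block diagonal with respect to $T_1\oplus\cdots\oplus T_r$, and then the Ricci and Bianchi identities force each block of $L$ to be a constant multiple of $g|_{T_i}$. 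At this stage $C=0$ and both $B$ and $L$ are constant tensors adapted to a fixed splitting of $TM^n$.

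The crux — and the step I expect to be the main obstacle — is to deduce from this that $(M^n,g)$ is flat, possibly after applying a Laguerre transformation, so that $M^n$ becomes a flat Laguerre isoparametric hypersurface. The Gauss equation of Laguerre geometry expresses the curvature tensor of $g$ solely through the (now constant) tensors $B$ and $L$; after a Laguerre transformation is used to normalize $L$ (the Laguerre group acting on $L$ by a controlled additive modification), the algebraic constraints coming from the constancy of the cross ratios $\Upsilon_{ijs}$ — which couple the eigenvalues $b_i$ across the distinct eigendistributions — together with the Codazzi and Ricci equations for $L$ force every component of the curvature of $g$ to vanish. In this normalized position $M^n$ has vanishing Laguerre form, constant Laguerre principal curvatures and flat Laguerre metric, hence is a flat Laguerre isoparametric hypersurface, and since the normalization was realized by a Laguerre transformation the original $M^n$ is Laguerre equivalent to it. The delicate points are exactly those that also appear in Theorem \ref{th1}: checking that the desired normalization of $L$ is achieved by a genuine Laguerre transformation, and verifying that the overdetermined system (Gauss, Codazzi for $B$, Codazzi and Ricci for $L$, together with the cross-ratio identities) is consistent only when the metric is flat — it is precisely here that the hypothesis $r\ge 3$ and the precise algebraic form of the Laguerre cross-ratio relations are indispensable.
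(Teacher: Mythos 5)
Your \emph{only if} half and the reduction of the \emph{if} half to Proposition \ref{pro1} are fine, and you correctly see that $\mathbb{L}$ and $\mathbb{B}$ commute and that $\mathbb{L}$ has constant eigenvalues (Lemma \ref{pro2} in the paper). But the crux you flag — deducing flatness of the Laguerre metric — is exactly where your sketch goes wrong, in two ways.

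First, the idea of ``normalizing $L$ by a Laguerre transformation'' cannot work: $g$, $\mathbb{B}$, and $\mathbb{L}$ are all Laguerre \emph{invariants} of the hypersurface (that is the whole point of the construction in Section \ref{lag-inv}), so no element of the Laguerre group changes any of them. There is no additive freedom in $\mathbb{L}$ to exploit; the flatness must, and does, come out purely algebraically. Second, you describe the Gauss equation as expressing the curvature ``through $B$ and $L$,'' whereas the Laguerre Gauss equation \eqref{2.8} involves \emph{only} $\mathbb{L}$: $R_{ijkl}=(\mathbb{L}\bigodot g)_{ijkl}$, so in the diagonalizing frame $R_{ijij}=-\tau_i-\tau_j$. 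This is the decisive simplification that makes the Laguerre case much easier than the M\"obius one, and your sketch does not use it. The paper's actual mechanism is: apply the generalized Cartan identity \eqref{tensor4} to the isoparametric tensor $\mathbb{L}$, which with $R_{ijij}=-\tau_i-\tau_j$ gives $\sum_{j\notin\{i\}}\frac{\tau_i^2-\tau_j^2}{(\tau_j-\tau_i)^2}=0$; taking $\tau_i$ of maximal square forces all $\tau_j^2$ equal, so $\mathbb{L}$ has at most two eigenvalues $\pm\tau$. If $\tau\neq 0$ then $\mathbb{L}$ is parallel with exactly two eigenvalues and the manifold splits; applying the Cartan identity to $\mathbb{B}$ restricted to each factor (where $R_{ijij}=-2\tau$ is a fixed sign) forces $\mathbb{B}$ to have a single eigenvalue per factor, hence $r\le 2$, contradicting $r\ge 3$. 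So $\tau=0$, the metric is flat, $\mathbb{B}$ is parallel by Theorem \ref{lecoda}, and the classification of hypersurfaces with parallel Laguerre second fundamental form (Theorem \ref{par}, from \cite{lit1}) together with $r\ge 3$ gives the flat Laguerre isoparametric hypersurface. Your proposal is missing this chain; as written it neither proves flatness nor reduces to a citable classification result.
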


Flat Laguerre  isoparametric hypersurfaces will be reviewed in Section \ref{lag-example}.
We note that a flat Laguerre isoparametric hypersurface is a Dupin hypersurface with any given number $k$ of principal curvatures with any prescribed multiplicities $m_1,\cdots,m_k$ and that a flat Laguerre isoparametric hypersurface is reducible and non-compact.

Our approach is to recognize M\"{o}bius (Laguerre) second fundamental form $B$ ($\mathbb{B}$) and Blaschke (Laguerre) tensor $A$ ($\mathbb{L}$)
are commuting isoparametric tensors. Isoparametric tensors on a Riemannian manifold $(M^n,g)$ are the Codazzi tensors with constant
eigenvalues. The name comes from the fact that the second fundamental form of an isoparametric hypersurface in space forms is an isoparametric tensor.
We will play with the integrability conditions \eqref{equa4} and \eqref{2.8} and generalized Cartan identity \eqref{tensor4} to
pin down the specific behaviors of M\"{o}bius (Laguerre) second fundamental form $B$ ($\mathbb{B}$) and Blaschke (Laguerre) tensor $A$ ($\mathbb{L}$).
In M\"{o}bius cases it turns out miraculously we are able to show that either the M\"{o}bius second fundamental form and Blaschke tensor are linearly dependent
(cf. Definition \ref{lin-iso}) or they behave as in \eqref{cone-form} and \eqref{cone-inv}. Then one may conclude the Dupin hypersurface is M\"{o}bius equivalent to
an isoparametric hypersurface in the sphere in the former cases following \cite[Main Theorem]{li2}, and to a cone over an isoparametric hypersurface in a sphere
in the latter cases in the light of Theorem \ref{cone-type}. In Laguerre cases the situation is much simpler. We will be able to show that the Laguerre second fundamental
form $\mathbb{B}$ in our consideration is in fact parallel. Then Theorem \ref{th2} follows from the classification result in \cite{lit1}.

We now give a brief outline of the paper. In Sections \ref{mobius-inv}, we will recall some facts about the M\"{o}bius geometry of a hypersurface in $\mathbb{R}^{n+1}$
and \cite[Main Theorem]{li2}.
In Section \ref{cone-char}, we will give a M\"{o}bius characterization of the cone hypersurfaces.
In Section \ref{proof-th1}, we will present the proof of our first main classification Theorem \ref{th1}.
In Section \ref{lag-inv}, we will recall some facts about the Laguerre geometry of a hypersurface in $\mathbb{R}^{n+1}$.
In Section \ref{proof-th2}, we will first review two families of  examples of Dupin hypersurface and then we will present
the proof of our second main Theorem \ref{th2}. In Appendix A, we will discuss some properties of isoparametric tensors and applications.


\section{M\"{o}bius invariants of hypersurfaces in $\mathbb{R}^{n+1}$}\label{mobius-inv}

In this section, to set the notations, we will briefly review the M\"obius geometry of hypersurfaces
in $\mathbb{R}^{n+1}$ via the Minkowski spacetime $\mathbb{R}^{n+3}_1$.
For details readers are referred to $\cite{ak,liu,w}$.
We observe there is a straightforward way to see that a Dupin hypersurface of constant M\"{o}bius curvatures
is always proper. We will also recall the characterization of Dupin hypersurfaces of constant M\"{o}bius curvatures in terms of
M\"{o}bius invariants given in \cite{rod}. We will also derive the characterization of Dupin hypersurfaces that are M\"{o}bius
equivalent to isoparametric hypersurfaces based on \cite[Main Theorem]{li2}.

Let $\mathbb{R}^{n+3}_1$ be the Minkowski spacetime, i.e., $\mathbb{R}^{n+3}$ with the
standard spacetime metric
\[\langle x,y\rangle=-x_0y_0+x_1y_1+\cdots+x_{n+2}y_{n+2}\] for
$x=(x_0,x_1,\cdots,x_{n+2})$ and $y=(y_0,y_1,\cdots,y_{n+2})$.
One may identify the conformal round sphere $\mathbb{S}^{n+1}$ as the projective positive light cone
$$C^{n+2}_+=\{y=(y_0,y_1)\in R\times \mathbb{R}^{n+2}|\langle y,y\rangle=0,y_0>0\}\subset \mathbb{R}^{n+3}_1.$$
Let $\textup{O}^+(n+2,1)$ be the Lorentz group of linear transformations of $\mathbb{R}^{n+3}_1$ that preserve
the time orientation and the spacetime metric, and let
$\textup{M}(\mathbb{S}^{n+1})$ be the group of M\"{o}bius transformations of $\mathbb{S}^{n+1}$. One knows from Liouville Theorem that
the group $\textup{M}(\mathbb{R}^{n+1})$ of M\"{o}bius transformations on $\mathbb{R}^{n+1}$ is the same as $\textup{M}(\mathbb{S}^{n+1})$.
It is then useful to mention the natural isomorphism
$$
L: \textup{O}^+(n+2, 1)\to \textup{M}(\mathbb{S}^{n+1})= \textup{M}(\mathbb{R}^{n+1}).
$$

Let $f:M^{n}\rightarrow \mathbb{R}^{n+1} $ be a hypersurface without umbilical points and
$\{e_i\}$ be an orthonormal basis with respect to the
induced metric $I=df\cdot df$ with the dual basis $\{\theta_i\}$.
Let $II=\sum_{ij}h_{ij}\theta_i\theta_j$ and
$H=\sum_i\frac{h_{ii}}{n}$ be the second fundamental form and the
mean curvature of $f$ respectively. To study the M\"{o}bius geometry of $f$, as in \cite{liu,w},
one considers the M\"{o}bius position vector
$$Y=\rho(f)\left(\frac{1+|f|^2}{2},\frac{1-|f|^2}{2},f\right): M^n\rightarrow C^{n+2}_+\subset \mathbb{R}^{n+3}_1
$$
and the M\"{o}bius metric
$$
g=<dY,dY>=(\rho(f))^2 df\cdot df,
$$
where $(\rho (f))^2=\frac{n}{n-1}(|II|^2-nH^2)$. One basic fact for this approach is

\begin{Lemma} Suppose that $f: M^n\to \mathbb{R}^{n+1}$ is an immersed hypersurface and
$$
Y=\rho(f)\left(\frac{1+|f|^2}{2},\frac{1-|f|^2}{2},f\right): M^n\rightarrow C^{n+2}_+\subset \mathbb{R}^{n+3}_1
$$
is the M\"{o}bius position vector of $f$. Then, for any $T\in \textup{O}^+(n+2, 1)$, we have
\begin{equation}\label{co-var}
TY = \rho(L(T)f) \left(\frac{1+|L(T)f|^2}{2},\frac{1-|L(T)f|^2}{2},L(T)f\right): M^n\rightarrow C^{n+2}_+\subset \mathbb{R}^{n+3}_1
\end{equation}
and therefore the M\"{o}bius metric $g$ stays invariant.
\end{Lemma}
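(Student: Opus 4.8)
\medskip
\noindent\textit{Proof strategy.} The plan is to track the canonical null lift of $f$ under $T$ and then to fix the conformal scaling by exploiting that $T$ is a linear isometry of $\mathbb{R}^{n+3}_1$. Write $\xi(x)=\bigl(\tfrac{1+|x|^2}{2},\tfrac{1-|x|^2}{2},x\bigr)$ for $x\in\mathbb{R}^{n+1}$, so that $Y=\rho(f)\,\xi\circ f$, and recall $\rho^2=\tfrac{n}{n-1}\,|\mathring{II}|^2$ with $\mathring{II}=II-H\,I$ the trace-free second fundamental form. First I would record two elementary properties of $\xi$: it is null, $\langle\xi,\xi\rangle=0$ (hence also $\langle\xi,d\xi\rangle=0$), and it is isometric, $\xi^{*}\langle\,\cdot\,,\,\cdot\,\rangle=\delta$ (the Euclidean metric), because the timelike and first spacelike components of $d\xi$ are $\pm\langle x,dx\rangle$ and cancel, leaving $|dx|^2$. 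Next I would unravel the definition of $L$: since $T\in\mathrm{O}^{+}(n+2,1)$ preserves the spacetime metric and the time orientation, it maps $C^{n+2}_{+}$ to itself and therefore descends to the conformal map $L(T)$ on the projectivized light cone $\mathbb{S}^{n+1}=\mathbb{R}^{n+1}\cup\{\infty\}$; consequently, on the open set where $\tilde f:=L(T)\circ f$ is $\mathbb{R}^{n+1}$-valued, $T(\xi\circ f)$ and $\xi\circ\tilde f$ lie on the same future null ray at every point, so $T(\xi\circ f)=\mu\,(\xi\circ\tilde f)$ for a unique positive function $\mu$ on $M^{n}$. Hence $TY=\rho(f)\,\mu\,(\xi\circ\tilde f)$, and the whole statement reduces to the identity $\rho(f)\,\mu=\rho(\tilde f)$.

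To prove that I would compute $\langle d(TY),d(TY)\rangle$ in two ways. On one hand, $T$ being a linear isometry of $\mathbb{R}^{n+3}_1$, $\langle d(TY),d(TY)\rangle=\langle dY,dY\rangle=g=\rho(f)^{2}I$. On the other hand, from $TY=\mu\,(\xi\circ\tilde f)$ together with $\langle\xi,\xi\rangle=\langle\xi,d\xi\rangle=0$, all cross terms involving $d\mu$ drop out and $\langle d(TY),d(TY)\rangle=\mu^{2}\,(\xi\circ\tilde f)^{*}\langle\,\cdot\,,\,\cdot\,\rangle=\mu^{2}\,\tilde f^{*}\delta=\mu^{2}\,\tilde I$, where $\tilde I$ denotes the induced metric of $\tilde f$. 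Comparing the two, $\mu^{2}\tilde I=\rho(f)^{2}I$. It now suffices to see that $\rho(\tilde f)^{2}\tilde I=\rho(f)^{2}I$ as well — that is, that the Möbius metric is a conformal invariant — and I would get this from the classical transformation law of the trace-free second fundamental form: $L(T)$ is a conformal diffeomorphism of $\mathbb{R}^{n+1}$, say $L(T)^{*}\delta=e^{2\sigma}\delta$, hence $\tilde I=e^{2\tau}I$ and $\mathring{II}(\tilde f)_{ij}=e^{\tau}\mathring{II}(f)_{ij}$ as $(0,2)$-tensor components, with $\tau=\sigma\circ f$; therefore $\rho(\tilde f)^{2}=\tfrac{n}{n-1}\,|\mathring{II}(\tilde f)|^{2}_{\tilde I}=e^{-2\tau}\rho(f)^{2}$ and $\rho(\tilde f)^{2}\tilde I=e^{-2\tau}\rho(f)^{2}\cdot e^{2\tau}I=\rho(f)^{2}I$. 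Then $\mu^{2}\tilde I=\rho(\tilde f)^{2}\tilde I$ forces $\mu=\rho(\tilde f)$, so $TY=\rho(\tilde f)\,(\xi\circ\tilde f)$, which is exactly \eqref{co-var}; and the invariance of $g$ follows immediately, since $\langle d(TY),d(TY)\rangle=\langle dY,dY\rangle$ shows the Möbius metric of $\tilde f$ coincides with that of $f$.

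The only ingredient here that is not pure bookkeeping is the law $\mathring{II}(\tilde f)_{ij}=e^{\tau}\mathring{II}(f)_{ij}$, equivalently the conformal invariance of $\rho^{2}I$; I would derive it from the standard computation of how the second fundamental form of a hypersurface changes under an ambient conformal rescaling $e^{2\sigma}$ — the unit normal rescales by $e^{-\sigma}$, one finds $\widehat{II}_{ij}=e^{\sigma}\bigl(II_{ij}+\nu(\sigma)\,I_{ij}\bigr)$ and $\widehat H=e^{-\sigma}\bigl(H+\nu(\sigma)\bigr)$, and the $\nu(\sigma)$ terms cancel in $\widehat{II}_{ij}-\widehat H\,\widehat I_{ij}$ — applied along $f(M^{n})$ after pulling the target metric back by the diffeomorphism $L(T)$. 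The main thing to handle carefully is, in fact, merely the domain bookkeeping: $\tilde f=L(T)\circ f$ need only be $\mathbb{R}^{n+1}$-valued on an open dense subset of $M^{n}$, on which \eqref{co-var} is asserted, whereas $TY:M^{n}\to C^{n+2}_{+}$ is defined globally regardless. Alternatively, one may simply invoke the construction of the Möbius invariants in \cite{liu,w}, into which this covariance is built.
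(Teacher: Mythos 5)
The paper states this lemma without proof, referring instead to \cite{liu,w}, so there is no argument of the authors to compare against; your proof supplies one. The argument is correct and self-contained: you identify the right ingredients — that $\xi(x)=\bigl(\tfrac{1+|x|^2}{2},\tfrac{1-|x|^2}{2},x\bigr)$ is a future-null isometric lift with $\langle\xi,\xi\rangle=\langle\xi,d\xi\rangle=0$; that by definition of $L$ the point $T\xi(f(p))$ lies on the same forward null ray as $\xi(L(T)f(p))$, so $T(\xi\circ f)=\mu\,(\xi\circ\tilde f)$ for a positive scalar $\mu$; and that one can pin $\mu$ down by computing $\langle d(TY),d(TY)\rangle$ twice, once from $T$ being a linear isometry of $\mathbb{R}^{n+3}_1$ and once from the decomposition $TY=\rho(f)\mu\,(\xi\circ\tilde f)$, and then invoking the conformal transformation law $\tilde I=e^{2\tau}I$, $\mathring{II}(\tilde f)=e^{\tau}\mathring{II}(f)$ to get $\rho(\tilde f)^2\tilde I=\rho(f)^2 I$. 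You also correctly flag the domain caveat that $L(T)\circ f$ need only be $\mathbb{R}^{n+1}$-valued where \eqref{co-var} is asserted.

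One small inconsistency to clean up: in the first paragraph you set $T(\xi\circ f)=\mu\,(\xi\circ\tilde f)$, so $TY=\rho(f)\,\mu\,(\xi\circ\tilde f)$ and the goal is $\rho(f)\mu=\rho(\tilde f)$; but in the second paragraph you write $TY=\mu\,(\xi\circ\tilde f)$ and conclude $\langle d(TY),d(TY)\rangle=\mu^2\tilde I$, silently redefining $\mu$ to absorb $\rho(f)$. Either carry $\rho(f)$ along — $\langle d(TY),d(TY)\rangle=(\rho(f)\mu)^2\tilde I$, so $(\rho(f)\mu)^2\tilde I=\rho(f)^2 I=\rho(\tilde f)^2\tilde I$ forces $\rho(f)\mu=\rho(\tilde f)$ — or absorb it from the outset. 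Either way the conclusion $TY=\rho(\tilde f)\,(\xi\circ\tilde f)$ is reached, and the invariance of $g$ is then immediate from $\langle d(TY),d(TY)\rangle=\langle dY,dY\rangle$, as you say.
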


To build a moving frame along $Y$ in $\mathbb{R}^{n+3}_1$,
as in \cite{liu,w}, one starts with the so-called conformal Gauss map
$$
\xi=H\left(\frac{1+|f|^2}{2},\frac{1-|f|^2}{2},f\right)+\left(f\cdot e_{n+1},-f\cdot e_{n+1},e_{n+1}\right)
$$
that represents the mean curvature sphere, where  $e_{n+1}$ is the unit normal vector field of $f$ in
$R^{n+1}$. One can pick up a moving frame $\{Y_1=Y_*(E_1),\cdots,Y_n=Y_*(E_n)\}$ for the tangent space of $Y$ along $M^n$ with its dual
$\{\omega_1,\cdots,\omega_n\}$. To complete a moving frame, as in \cite{liu,w}, one chooses another null normal
vector $N$ to $Y$ in $\mathbb{R}^{n+3}_1$ such that $<N,Y>= 1$. Thus $\{Y,N,Y_1,\cdots,Y_n,\xi\}$ forms a moving frame in
$\mathbb{R}^{n+3}_1$ along $Y$ and the structure equations are:
\begin{equation}\label{stru}
\begin{split}
&dY=\sum_iY_i\omega_i,\\
&dN=\sum_{ij}A_{ij}\omega_iY_j+\sum_iC_i\omega_i\xi,\\
&dY_i=-\sum_jA_{ij}\omega_jY-\omega_iN+\sum_j\omega_{ij}Y_j+\sum_jB_{ij}\omega_j\xi,\\
&d\xi=-\sum_iC_i\omega_iY-\sum_{ij}\omega_iB_{ij}Y_j,
\end{split}
\end{equation}
where $\omega_{ij}$ is the connection form of the M\"{o}bius metric
$g$ with respect to the dual $\{\omega_1,\cdots,\omega_n\}$ and the range of Latin indices are in $1, 2, \cdots, n$. The tensors
$$
A=\sum_{ij}A_{ij}\omega_i\otimes\omega_j,~~
B=\sum_{ij}B_{ij}\omega_i\otimes\omega_j,~~
C=\sum_iC_i\omega_i$$ are called the
Blaschke tensor, the M\"{o}bius second fundamental form and the M\"{o}bius form of $f$ respectively (cf. \cite{liu,w}).
In \cite{w},  the integrability conditions for $\{A, B, C\}$ are identified as
\begin{eqnarray}
&&A_{ij,k}-A_{ik,j}=B_{ik}C_j-B_{ij}C_k,\label{equa1}\\
&&C_{i,j}-C_{j,i}=\sum_k(B_{ik}A_{kj}-B_{jk}A_{ki}),\label{equa2}\\
&&B_{ij,k}-B_{ik,j}=\delta_{ij}C_k-\delta_{ik}C_j,\label{equa3}\\
&&R_{ijkl}=B_{ik}B_{jl}-B_{il}B_{jk}
+\delta_{ik}A_{jl}+\delta_{jl}A_{ik}
-\delta_{il}A_{jk}-\delta_{jk}A_{il},\label{equa4}\\
&&R_{ij}:=\sum_kR_{ikjk}=-\sum_kB_{ik}B_{kj}+(tr{\bf
A})\delta_{ij}+(n-2)A_{ij},\label{equa5}\\
&&\sum_iB_{ii}=0, ~~\sum_{ij}(B_{ij})^2=\frac{n-1}{n},~~ tr{\bf
A}=\frac{1}{2n}(1+n^2\kappa),\label{equa6}
\end{eqnarray}
where $R_{ijkl}$ denote the curvature tensor of $g$,
$\kappa=\frac{1}{n(n-1)}\sum_{ij}R_{ijij}$ is its normalized
scalar curvature.  Most importantly, it was shown in \cite{w} that  $\{g, {B}\}$
determines the hypersurface $f$ up to M\"{o}bius
transformations provided that $n\geq 3$.

We would also like to recall from \cite{liu,w} how $\{A, B, C\}$ can be calculated in terms of the geometry of $f$ in $\mathbb{R}^{n+1}$:
\begin{equation}\label{re2}
\begin{split}
B_{ij}&=\rho^{-1}(h_{ij}-H\delta_{ij}),\\
C_i&=-\rho^{-2}[e_i(H)+\sum_j(h_{ij}-H\delta_{ij})e_j(\log\rho)],\\
A_{ij}&=-\rho^{-2}[Hess_{ij}(\log\rho)-e_i(\log\rho)e_j(\log\rho)-Hh_{ij}]\\
&-\frac{1}{2}\rho^{-2}(|\nabla \log\rho|^2+H^2)\delta_{ij},
\end{split}
\end{equation}
where Hessian and $\nabla$ are with respect to $I=df\cdot df$.
The eigenvalues of $B$ are called the M\"{o}bius principal curvatures of $f$.
Let $\{b_1,\cdots,b_n\}$ be the M\"{o}bius principal curvatures and
$\{\lambda_1,\cdots,\lambda_n\}$ be the principal curvatures of $f$,
then, from \eqref{re2},
\begin{equation}\label{prin-cur}
b_i=\rho^{-1}(\lambda_i-H).
\end{equation}
Clearly the number of distinct M\"{o}bius principal curvatures is the same as
that of principal curvatures of $f$ and
\begin{equation}\label{moecur}
\mathbb{M}_{ijk}=\frac{\lambda_i-\lambda_j}{\lambda_i-\lambda_k}=\frac{b_i-b_j}{b_i-b_k},
\end{equation}
which confirms that the M\"{o}bius curvatures are M\"{o}bius invariants. It is then rather easily seen from \eqref{equa6} that, if
M\"{o}bius curavtures $\mathbb{M}_{ijk}$ are constant for all $1\leq i, j, k \leq n$, then all M\"{o}bius principal curvatures $\{b_i\}$ are constant.
\begin{PROPOSITION}\label{pro4}
Let $f:M^{n}\rightarrow \mathbb{R}^{n+1}$ be an immersed  hypersurface with $r (\geq 3)$ distinct principal curvatures. Then the M\"{o}bius curvatures
$\mathbb{M}_{ijk}$ are constant if and only if the M\"{o}bius principal curvatures $\{b_1,\cdots,b_n\}$ are constant.
\end{PROPOSITION}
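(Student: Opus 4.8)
The plan is short, since one direction is trivial and the other is pure algebra driven by the two normalizations in \eqref{equa6}. For the ``if'' direction: if the M\"{o}bius principal curvatures $b_1,\dots,b_n$ are constant, then by \eqref{moecur} each $\mathbb{M}_{ijk}=(b_i-b_j)/(b_i-b_k)$ is a ratio of constants, hence constant; so all the work is in the converse.

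For the converse, I would first fix the set-up. Because $f$ has exactly $r$ distinct principal curvatures at every point, by \eqref{prin-cur} it has exactly $r$ distinct M\"{o}bius principal curvatures $B_1,\dots,B_r$, which are then smooth functions with locally constant multiplicities; pick an index $i_\alpha$ in each class so that $b_{i_\alpha}=B_\alpha$, and note that $B_\alpha-B_\beta$ never vanishes for $\alpha\neq\beta$, so every cross-ratio $\mathbb{M}_{i_\alpha i_\beta i_\gamma}$ is well defined. Set $t:=B_1-B_2$, a continuous nowhere-zero function. The key observation is that constancy of the cross-ratios $\mathbb{M}_{i_1 i_2 i_\alpha}=t/(B_1-B_\alpha)$ forces $B_\alpha=B_1+d_\alpha t$ for constants $d_\alpha$, so that every M\"{o}bius principal curvature has the form $b_i=B_1+e_i t$ with $e_i$ constant (depending only on the class of $i$).

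Now I would feed this into \eqref{equa6}. From $\sum_i b_i=0$ one solves $B_1=a\,t$ for a constant $a$, hence $b_i=f_i\,t$ with all $f_i$ constant; from $\sum_i b_i^2=\frac{n-1}{n}$ one then gets $(\sum_i f_i^2)\,t^2=\frac{n-1}{n}$, which is positive, whence $\sum_i f_i^2>0$ and $t^2$ is a positive constant. A continuous nowhere-zero function whose square is constant has constant sign on each connected component, so $t$ is constant, and therefore so is every $b_i=f_i\,t$. This finishes the argument (taking $M^n$ connected, as usual).

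The argument has no real obstacle; the points needing care — which is also where the hypothesis $r\ (\geq 3)$ enters — are that the distinct M\"{o}bius principal curvatures are honest smooth functions with nowhere-vanishing pairwise differences (so that the cross-ratios make sense and the reduction $b_i=B_1+e_i t$ is valid, and one needs $r\geq 3$ even to have a cross-ratio linking three different classes), together with the harmless sign ambiguity of $t$, removed by connectedness. Notably, only the two scalar identities in \eqref{equa6} are used; none of the Gauss/Codazzi-type relations \eqref{equa1}--\eqref{equa5} are needed.
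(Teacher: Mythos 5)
Your proof is correct, and it reaches the conclusion by a slightly different route than the paper. The paper differentiates the constancy of $\mathbb{M}_{ijk}$ to obtain, for each tangent vector $X$, the relation $\frac{X(b_i)-X(b_j)}{b_i-b_j}=\frac{X(b_i)-X(b_k)}{b_i-b_k}$, deduces that $X(b_j)=\mu b_j+d$ for scalar functions $\mu,d$, and then kills $d$ and $\mu$ in turn by differentiating the two normalizations $\sum_i b_i=0$ and $\sum_i b_i^2=\tfrac{n-1}{n}$ from \eqref{equa6}. You instead avoid differentiation entirely: you use the constancy of $\mathbb{M}_{i_1 i_2 i_\alpha}$ to parametrize each distinct M\"{o}bius principal curvature as $B_\alpha=B_1+d_\alpha t$ with $t=B_1-B_2$, then use the trace condition to get $b_i=f_i\,t$ with constant $f_i$, the trace-of-squares condition to show $t^2$ is a positive constant, and finally connectedness (with $t$ nowhere zero) to conclude $t$ itself is constant. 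Both proofs use exactly the same two inputs from \eqref{equa6} and nothing from \eqref{equa1}--\eqref{equa5}; yours trades the differential step for an algebraic parametrization plus a topological step (resolving the sign of $t$ by connectedness), which the paper's argument sidesteps by working with derivatives. One extra bonus of your version is that it only needs continuity of the eigenvalue functions $B_\alpha$, not smoothness, whereas the paper's differentiation implicitly works on the open dense set where the $b_i$ are smooth and then extends by continuity.
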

\begin{proof} It suffices to prove that the M\"{o}bius curvatures $\mathbb{M}_{ijk}$ are constant implies all M\"{o}bius principal curvatures $b_i$ are constant.
First, for any tangent vector $X\in TM^n$, it is not hard to calculate that
$$\frac{X(b_i)-X(b_j)}{b_i-b_j}=\frac{X(b_i)-X(b_k)}{b_i-b_k}= \frac{X(b_j)-X(b_k)}{b_j-b_k} $$
from $\mathbb{M}_{ijk}$ being constant for all $1\leq i,j,k\leq n$.
Hence there exist $\mu$ and  $d$ such that
\begin{equation}\label{lb11}
X(b_j)=\mu b_j+d~~\text{for } j=1,\cdots,n.
\end{equation}
It is then immediate that \eqref{equa6} implies $d=0$ and $b_1X(b_1)+\cdots+b_nX(b_n)=0$, which implies $\mu=0$. Thus all $b_1,\cdots, b_n$ are constant.
\end{proof}
As a consequence of Proposition \ref{pro4} and \eqref{prin-cur}, one easily derives

\begin{Corollary}\label{corm}
A Dupin hypersurface of constant M\"{o}bius curvature is always proper.
\end{Corollary}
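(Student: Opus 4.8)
The plan is to combine Proposition \ref{pro4} with the relation \eqref{prin-cur} between M\"obius principal curvatures and Euclidean principal curvatures. Recall that being a Dupin hypersurface means: along each curvature surface, the associated principal curvature is constant; being proper means: the number $r$ of distinct principal curvatures is locally constant. So the content of the corollary is that constancy of the M\"obius curvatures forces $r$ to be locally constant.

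First I would invoke Proposition \ref{pro4}: since the M\"obius curvatures $\mathbb{M}_{ijk}$ are constant, all M\"obius principal curvatures $b_1,\dots,b_n$ are (globally) constant functions on $M^n$. In particular the M\"obius shape operator $B$ has constant eigenvalues, so the number $r$ of distinct eigenvalues of $B$ is constant on $M^n$. By \eqref{prin-cur}, $b_i=\rho^{-1}(\lambda_i-H)$, and since $\rho>0$ (we are away from umbilics), for each fixed point $b_i=b_j$ if and only if $\lambda_i=\lambda_j$; hence the number $r$ of distinct M\"obius principal curvatures equals the number of distinct Euclidean principal curvatures at every point. Therefore the number of distinct principal curvatures of $f$ is constant, equal to $r$, which is exactly the definition of proper Dupin. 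One should also note that $M^n$ being Dupin is part of the hypothesis (the statement concerns a ``Dupin hypersurface of constant M\"obius curvature''), so no work is needed to re-establish the Dupin property itself; only properness requires argument.

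There is essentially no obstacle here: the only point that deserves a word is that the equality ``$\{$number of distinct $b_i\} = \{$number of distinct $\lambda_i\}$'' holds pointwise because $\rho$ is a positive scalar and $H$ is a scalar, so the affine map $\lambda\mapsto \rho^{-1}(\lambda-H)$ on the real line is a bijection at each point and hence preserves coincidences and distinctions among the $\lambda_i$. This is already recorded in the sentence following \eqref{prin-cur} (``the number of distinct M\"obius principal curvatures is the same as that of principal curvatures of $f$''), so the corollary is genuinely immediate from Proposition \ref{pro4} together with that remark.
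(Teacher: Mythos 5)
Your proposal is correct and matches the paper's argument exactly: the paper deduces Corollary \ref{corm} ``as a consequence of Proposition \ref{pro4} and \eqref{prin-cur},'' which is precisely the combination you spell out. The one point you add explicitly — that the pointwise affine bijection $\lambda\mapsto\rho^{-1}(\lambda-H)$ preserves coincidences among principal curvatures, so the count of distinct $\lambda_i$ equals the count of distinct $b_i$ — is indeed the remark already recorded after \eqref{prin-cur}, so nothing is missing.
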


As another consequence of Proposition \ref{pro4}, we can characterize the Dupin hypersurfaces of constant M\"{o}bius curvature in terms of M\"{o}bius invariants,
which were observed by Rodrigues L.A. and Tenenblat K. \cite{rod}.
In fact, from \eqref{re2}, we have
$$C_i=-\rho^{-2}[e_i(H)+\sum_je_j(\rho B_{ij})-\rho e_j(B_{ij})]=-\rho^{-2}[e_i(\lambda_i)-\rho e_i(b_i)].$$
Hence one easily derives
\begin{Theorem} \label{is1} (\cite{rod}) Let $f:M^{n}\rightarrow \mathbb{R}^{n+1}$ be an immersed hypersurface with $r (\geq 3)$ distinct principal curvatures.
Then it is a Dupin hypersurface of constant M\"{o}bius curvatures if and only if its M\"{o}bius form vanishes and its M\"{o}bius principal curvatures
are all constant, i.e. it is a M\"{o}bius isoparametric hypersurface.
\end{Theorem}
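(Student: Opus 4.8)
The plan is to read the theorem off directly from Proposition \ref{pro4} together with the expression for the M\"obius form $C$ that was computed in the lines preceding the statement. Concretely, I would work in a local orthonormal principal frame $\{e_i\}$ — available because the number $r$ of distinct principal curvatures is constant, hence so are the multiplicities — in which $h_{ij}-H\delta_{ij}$ and $B_{ij}$ are both diagonal. Using \eqref{prin-cur} in the form $\lambda_i=\rho b_i+H$ together with $\rho\,e_i(\log\rho)=e_i(\rho)$, the sum in the $C_i$ part of \eqref{re2} collapses to a single term and, after the product rule on $e_i(\rho b_i+H)$, one obtains
$$C_i=-\rho^{-2}\bigl(e_i(\lambda_i)-\rho\,e_i(b_i)\bigr).$$
Since $r\ge 3$ there are no umbilic points, so $\rho^2=\tfrac{n}{n-1}(|II|^2-nH^2)>0$ and $C=\sum_i C_i\omega_i$ vanishes if and only if $e_i(\lambda_i)-\rho\,e_i(b_i)=0$ for every $i$.

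Next I would phrase the Dupin condition in the same frame. For each distinct principal curvature $\lambda$ the eigendistribution $D_\lambda$ is integrable — automatically, being a line field, when its dimension is $1$, and integrable by the Codazzi equation when the dimension is at least $2$ — and its leaves are precisely the curvature surfaces attached to $\lambda$. If $\{e_i:i\in I_\lambda\}$ is the sub-frame spanning $D_\lambda$, then $\lambda$ is constant along every curvature surface of $\lambda$ exactly when $e_i(\lambda)=0$ for all $i\in I_\lambda$; letting $\lambda$ run over all the distinct principal curvatures, this says that $f$ is Dupin if and only if $e_i(\lambda_i)=0$ for all $i$, where $\lambda_i$ denotes the principal curvature associated to $e_i$.

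The theorem is then a short combination of these two facts. By Proposition \ref{pro4}, ``the M\"obius curvatures $\mathbb{M}_{ijk}$ are constant'' is equivalent to ``$b_1,\dots,b_n$ are constant''; and under this hypothesis $e_i(b_i)=0$, so the displayed identity degenerates to $C_i=-\rho^{-2}e_i(\lambda_i)$. Hence, once all $b_i$ are constant, $C\equiv 0 \iff e_i(\lambda_i)=0$ for all $i \iff f$ is Dupin. Conversely, if all $b_i$ are constant then by \eqref{moecur} each $\mathbb{M}_{ijk}=\tfrac{b_i-b_j}{b_i-b_k}$ is constant. Combining, $f$ is a Dupin hypersurface with constant M\"obius curvatures if and only if its M\"obius principal curvatures are all constant and its M\"obius form vanishes, i.e. $f$ is M\"obius isoparametric.

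I do not anticipate a genuine difficulty; the only two points that need a little care are the bookkeeping that turns the $C_i$ entry of \eqref{re2} into the displayed identity, and the classical fact — used to legitimise writing the Dupin property as $e_i(\lambda_i)=0$ for all $i$ — that a principal curvature of multiplicity at least $2$ has an integrable eigendistribution whose leaves are genuine curvature surfaces.
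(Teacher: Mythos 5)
Your proof is correct and follows essentially the same route the paper takes: the paper states precisely the same reduction $C_i=-\rho^{-2}\bigl(e_i(\lambda_i)-\rho\,e_i(b_i)\bigr)$ just before the theorem and then says ``hence one easily derives''; you have supplied the easy derivation and, in doing so, explicitly invoked Proposition~\ref{pro4} for the equivalence ``constant $\mathbb{M}_{ijk}\Leftrightarrow$ constant $b_i$'' and the standard rephrasing of the Dupin condition as $e_i(\lambda_i)=0$ in a principal frame, exactly as intended.
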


Recall from \cite{hu1, hu2,lih2}, an immersed hypersurface is said to be a M\"{o}bius
isoparametric hypersurface if its M\"{o}bius form vanishes and its M\"{o}bius principal curvatures are all constant. We would like to remind readers that a M\"{o}bius
isoparametric hypersurface is not necessarily M\"{o}bius equivalent to an isoparametric hypersurface.

The following result \cite[Main Theorem]{li2} enables us to characterize Dupin hypersurfaces that are M\"{o}bius equivalent to isoparametric hypersurfaces.

\begin{Theorem}\cite[Main Theorem]{li2} \label{liw}
Let $x:M^n\to \mathbb{S}^{n+1}$ be an immersed hypersurface. Suppose that it satisfies
that $C=0$ and $A=\lambda B+\mu g$ for some functions $\lambda$ and $\mu$.
Then $x$ is M\"{o}bius equivalent to a hypersurface with constant mean curvature and scalar curvature in Euclidean space $\mathbb{R}^{n+1}$, or sphere
$\mathbb{S}^{n+1}$, or hyperbolic space $\mathbb{H}^{n+1}$.
\end{Theorem}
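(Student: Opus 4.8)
The plan is to convert the two hypotheses into a constant (parallel) vector $\mathbf{e}$ of the Minkowski space $\mathbb{R}^{n+3}_1$ attached to the M\"obius position $Y$, and then to read off the ambient space form from the causal type of $\mathbf{e}$. First I would extract the algebraic consequences of $C=0$ from the integrability conditions \eqref{equa1}--\eqref{equa3}: both $A$ and $B$ become Codazzi tensors, while \eqref{equa2} reduces to $[A,B]=0$, which is automatic once $A=\lambda B+\mu g$. Substituting $A=\lambda B+\mu g$ into the Codazzi identity $A_{ij,k}=A_{ik,j}$ and subtracting $\lambda$ times $B_{ij,k}=B_{ik,j}$ leaves
\[
\lambda_{,k}B_{ij}+\mu_{,k}\delta_{ij}=\lambda_{,j}B_{ik}+\mu_{,j}\delta_{ik}.
\]
Reading this in an orthonormal eigenbasis of $B$ and contracting, one obtains $(n-1)\,d\mu=B(\nabla\lambda,\cdot)$ together with $b_i\,\lambda_{,j}+\mu_{,j}=0$ for all $i\neq j$, where the $b_i$ are the M\"obius principal curvatures. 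As soon as some index $j$ admits partners $i\neq i'$, both distinct from $j$, with $b_i\neq b_{i'}$, we get $\lambda_{,j}=\mu_{,j}=0$; hence $d\lambda=d\mu=0$ except possibly when $B$ has exactly two distinct M\"obius principal curvatures one of which is simple. In that exceptional case the normalizations $\operatorname{tr}B=0$ and $\sum B_{ij}^2=\tfrac{n-1}{n}$ force the eigenvalues to be $\pm\tfrac{n-1}{n}$ (simple) and $\mp\tfrac1n$, and I would still conclude $d\lambda=d\mu=0$ by a separate argument using the curvature identity \eqref{equa4} and the (then forced) totally geodesic structure of the multiplicity-$(n-1)$ distribution. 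So from here on $\lambda$ and $\mu$ may be taken constant.

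The core step is then one line: using the structure equations \eqref{stru} with $C=0$ and $A_{ij}-\lambda B_{ij}=\mu\delta_{ij}$,
\[
dN+\lambda\,d\xi=\sum_{i,j}(A_{ij}-\lambda B_{ij})\,\omega_iY_j=\mu\sum_i\omega_iY_i=\mu\,dY,
\]
so $\mathbf{e}:=N+\lambda\xi-\mu Y$ satisfies $d\mathbf{e}=\mu\,dY-\mu\,dY=0$ and is a constant vector of $\mathbb{R}^{n+3}_1$. The relations $\langle Y,Y\rangle=\langle N,N\rangle=\langle\xi,Y\rangle=\langle\xi,N\rangle=0$ and $\langle N,Y\rangle=\langle\xi,\xi\rangle=1$ then give $\langle\mathbf{e},\mathbf{e}\rangle=\lambda^2-2\mu$, along with $\langle Y,\mathbf{e}\rangle\equiv1$ and $\langle\xi,\mathbf{e}\rangle\equiv\lambda$.

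Finally I would translate back. Apply $T\in\operatorname{O}^+(n+2,1)$ carrying $\mathbf{e}$ to a standard vector of the same causal type: timelike when $\lambda^2-2\mu<0$, null when $\lambda^2-2\mu=0$, spacelike when $\lambda^2-2\mu>0$. By the covariance \eqref{co-var}, $TY$ is the M\"obius position of $L(T)x$; in the screen adapted to $T\mathbf{e}$ --- the round sphere $\mathbb{S}^{n+1}$, Euclidean space $\mathbb{R}^{n+1}$, or hyperbolic space $\mathbb{H}^{n+1}$ respectively --- the constancy of $\langle Y,\mathbf{e}\rangle$ says that the conformal factor relating the M\"obius metric $g$ to the space-form metric (i.e. $\rho$ in \eqref{re2}) is constant, while $\langle\xi,\mathbf{e}\rangle\equiv\lambda$ says that the mean curvature of $L(T)x$ in that space form is constant --- this is precisely how the mean curvature sphere $\xi$ pairs with the vector defining the ambient space form, as one checks directly from the formula for $\xi$. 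Since $\rho^2=\tfrac{n}{n-1}(|II|^2-nH^2)$, constancy of $\rho$ and of $H$ forces $|II|^2$, hence (by the Gauss equation) the induced scalar curvature, to be constant; equivalently one notes $\operatorname{tr}A=n\mu$ is constant, so the M\"obius scalar curvature is constant, and feeds that in. This completes the argument modulo the special case flagged above.

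I expect the main obstacle to be exactly that exceptional two-curvature configuration: there the Codazzi identity for $A$ by itself leaves room for a non-constant $\lambda$, and ruling it out seems to require the full curvature identity \eqref{equa4} (or, equivalently, an appeal to the classification of M\"obius isoparametric hypersurfaces with two distinct principal curvatures, together with the fact that a CMC hypersurface with two principal curvatures and constant scalar curvature in a space form is isoparametric). Everything downstream of ``$\lambda,\mu$ constant'' is essentially forced.
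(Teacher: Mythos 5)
Your proposal is essentially correct and follows the same strategy as Li--Wang's original proof of this result in \cite{li2} (which this paper cites without reproducing): prove $\lambda$, $\mu$ constant from the Codazzi identities, then exhibit $\mathbf{e}=N+\lambda\xi-\mu Y$ as a parallel vector of $\mathbb{R}^{n+3}_1$ whose causal type ($\lambda^2-2\mu$ negative, zero, or positive) singles out the space form $\mathbb{S}^{n+1}$, $\mathbb{R}^{n+1}$, or $\mathbb{H}^{n+1}$, with $\langle Y,\mathbf{e}\rangle\equiv 1$ giving the constant conformal factor and $\langle\xi,\mathbf{e}\rangle\equiv\lambda$ the constant mean curvature.

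The one place where you stop short is the exceptional configuration where $B$ has exactly two distinct M\"obius principal curvatures, one of them simple. Your Codazzi computation only yields $\lambda_{,k}=\mu_{,k}=0$ along the multiplicity-$(n-1)$ eigendirections and the single relation $\mu_{,n}=-\alpha\lambda_{,n}$ in the simple direction $E_n$, and you correctly anticipate that this gap must be closed with \eqref{equa4}. In fact your sketched fix does close it: by the normalizations \eqref{equa6}, the two eigenvalues $\alpha$ (multiplicity $n-1$) and $\beta$ (simple) are the \emph{constants} $\pm\tfrac1n$ and $\mp\tfrac{n-1}{n}$, so $B$ is an isoparametric tensor with two distinct eigenvalues and is therefore parallel (Proposition \ref{isop1}); the de~Rham splitting from Proposition \ref{isop} forces $\omega_{in}=0$ and hence $R_{inin}=0$ for $i<n$. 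Feeding $A=\lambda B+\mu g$ into \eqref{equa4} gives $R_{inin}=\alpha\beta+\lambda(\alpha+\beta)+2\mu=0$; differentiating in $E_n$ and using $\mu_{,n}=-\alpha\lambda_{,n}$ yields $\lambda_{,n}(\beta-\alpha)=0$, so $\lambda_{,n}=\mu_{,n}=0$. Thus $\lambda$ and $\mu$ are constant in this case as well, and the rest of your argument goes through verbatim. (Your alternative of quoting the two-curvature M\"obius-isoparametric classification of \cite{lih2} also suffices.)

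Two minor bookkeeping points worth recording explicitly if you write this up: first, the pointwise dichotomy you set up must be glued across regions where the eigenvalue multiplicities of $B$ change, but this is harmless since $\nabla\lambda=\nabla\mu=0$ is a closed condition and holds on the open dense set where it applies, after which continuity (or the exceptional-case analysis on the interior of its locus) finishes it; second, when you identify $\langle\xi,\mathbf{e}\rangle$ with the mean curvature of $L(T)x$, the normalization constant in front depends on which standard representative of $\mathbf{e}$ you pick, but since $\langle\xi,\mathbf{e}\rangle$ is constant the conclusion ``$H$ constant'' is unaffected.
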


Consequently we have

\begin{Corollary}\label{iso-type}
 Suppose that $f: M^n\to \mathbb{R}^{n+1}$ is a Dupin hypersurface with $r (\geq3)$ distinct principal curvatures and constant M\"{o}bius curvatures.
 Then it is M\"{o}bius equivalent to an isoparametric hypersurface in the sphere
$\mathbb{S}^{n+1}$ if and only if $A =  \lambda B  + \mu g$ for some numbers $\lambda$ and $\mu$.
\end{Corollary}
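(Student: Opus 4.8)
The plan is to deduce this directly from Theorem \ref{liw} together with the M\"{o}bius invariance of the hypotheses, so the essential content is checking that the displayed algebraic relation $A=\lambda B+\mu g$ is both necessary and sufficient. For the "if" direction, suppose $f$ is a Dupin hypersurface with $r\ge 3$ distinct principal curvatures, constant M\"{o}bius curvatures, and $A=\lambda B+\mu g$ for constants $\lambda,\mu$. By Theorem \ref{is1}, $f$ is a M\"{o}bius isoparametric hypersurface, so in particular its M\"{o}bius form vanishes, $C=0$. Pass via the M\"{o}bius position vector $Y$ (equivalently, via stereographic projection) to an immersed hypersurface $x:M^n\to\mathbb{S}^{n+1}$ that is M\"{o}bius equivalent to $f$; since $C$, $B$, $A$, and $g$ are M\"{o}bius covariant, $x$ still satisfies $C=0$ and $A=\lambda B+\mu g$. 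Theorem \ref{liw} then yields that $x$ — hence $f$ — is M\"{o}bius equivalent to a hypersurface $\widetilde f$ in some space form $\mathbb{R}^{n+1}$, $\mathbb{S}^{n+1}$, or $\mathbb{H}^{n+1}$ having constant mean curvature $\widetilde H$ and constant scalar curvature. One then has to upgrade "constant mean and scalar curvature" to "isoparametric." Here I would invoke Proposition \ref{pro4}: the M\"{o}bius principal curvatures $b_i$ are constant, and by \eqref{prin-cur} the principal curvatures of $\widetilde f$ are $\widetilde\lambda_i=\widetilde\rho\, b_i+\widetilde H$; since $\widetilde f$ has constant mean curvature and constant M\"{o}bius factor $\widetilde\rho$ (the latter follows from $\widetilde H$ and the scalar curvature being constant via \eqref{equa6}, i.e. $\widetilde\rho^2=\tfrac{n}{n-1}(|\widetilde{II}|^2-n\widetilde H^2)$ combined with the Gauss equation), all $\widetilde\lambda_i$ are constant, so $\widetilde f$ is isoparametric. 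Being an isoparametric hypersurface with $r\ge3$ distinct principal curvatures in a space form, it must sit (after an ambient isometry) in $\mathbb{S}^{n+1}$, since $\mathbb{R}^{n+1}$ and $\mathbb{H}^{n+1}$ admit only isoparametric hypersurfaces with at most two distinct principal curvatures.

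For the "only if" direction, suppose $f$ is M\"{o}bius equivalent to an isoparametric hypersurface $x:M^n\to\mathbb{S}^{n+1}$. It suffices to compute the M\"{o}bius invariants $A$, $B$, $C$, $g$ of an isoparametric hypersurface in $\mathbb{S}^{n+1}$ and check the relation there, because $A=\lambda B+\mu g$ (with $\lambda,\mu$ constant) is M\"{o}bius invariant. For an isoparametric hypersurface in $\mathbb{S}^{n+1}$ one has constant principal curvatures and constant mean curvature, hence constant $\rho$; formula \eqref{re2} (in its $\mathbb{S}^{n+1}$ analogue, or by first moving to $\mathbb{R}^{n+1}$) gives $C=0$, $B_{ij}=\rho^{-1}(\lambda_i-H)\delta_{ij}$ with constant entries, and $A_{ij}$ reduces — since $\log\rho$ is constant so its Hessian and gradient vanish — to a combination of $h_{ij}$ and $\delta_{ij}$ with constant coefficients, which is precisely of the form $\lambda B_{ij}+\mu\delta_{ij}=\lambda B_{ij}+\mu g_{ij}$ for suitable constants. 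This direction is essentially a one-line verification from \eqref{re2}.

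The main obstacle is the bridge in the "if" direction from the conclusion of Theorem \ref{liw} — constant mean and scalar curvature in a space form — to the stronger statement that the hypersurface is isoparametric and lives in $\mathbb{S}^{n+1}$. The point is that constant mean and scalar curvature do not in general force constant principal curvatures, so one genuinely needs the extra input that the M\"{o}bius principal curvatures are constant (Proposition \ref{pro4}) to pin down each $\widetilde\lambda_i$ via \eqref{prin-cur}; and one then needs the classical dichotomy that in $\mathbb{R}^{n+1}$ and $\mathbb{H}^{n+1}$ isoparametric hypersurfaces have at most two distinct principal curvatures (so that $r\ge3$ rules those ambient spaces out) to land specifically in the sphere. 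Everything else is bookkeeping with the M\"{o}bius covariance of $\{A,B,C,g\}$ and the structure equations \eqref{stru}--\eqref{re2}.
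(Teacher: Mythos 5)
Your proposal is correct and follows essentially the same route as the paper: in the ``if'' direction, combine $C=0$ (from Theorem \ref{is1}) and $A=\lambda B+\mu g$ with Theorem \ref{liw} to land in a space form with constant mean and scalar curvature, observe that $\rho$ is then constant, combine with the constant M\"{o}bius principal curvatures via \eqref{prin-cur}/\eqref{re2} to conclude isoparametric, and use the $r\ge3$ hypothesis together with the classification of isoparametric hypersurfaces in $\mathbb{R}^{n+1}$ and $\mathbb{H}^{n+1}$ to force the ambient space to be $\mathbb{S}^{n+1}$; the ``only if'' direction is the same one-line computation from \eqref{re2} with constant $\rho$. You spell out some steps (M\"{o}bius covariance of the relation $A=\lambda B+\mu g$, vanishing of $\nabla\log\rho$) that the paper leaves implicit, and one trivial slip: the formula $\rho^2=\tfrac{n}{n-1}(|II|^2-nH^2)$ comes from the text of Section \ref{mobius-inv}, not from \eqref{equa6}.
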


\begin{proof}
If $A =  \lambda B  + \mu g$ for some numbers $\lambda$ and $\mu$, by the definition of the function $\rho$, it is immediate that $\rho$ is constant when the mean curvature and the scalar curvature of a hypersurface in a space form
are constant.
Then one can conclude that the Dupin hypersurface under the assumptions is M\"{o}bius equivalent to an isoparametric hypersurface in space forms in the light of the first
equation in \eqref{re2}. Therefore, with the assumption that $f$ has $r (\geq 3)$ distinct principal curvatures,  the proof is complete due to known classifications of isoparametric hypersurafces in $\mathbb{R}^{n+1}$ and $\mathbb{H}^{n+1}$ (cf. \cite[Chap. 3]{cecil3}).

On the contrary, If $f$ is  an isoparametric hypersurface in the sphere
$\mathbb{S}^{n+1}$, then the mean curvature and the scalar curvature are constant. Hence $\rho$ is constant and $A =  \lambda B  + \mu g$ for some numbers $\lambda$ and $\mu$ by the equations \eqref{re2}.
\end{proof}


\section{Cones over isoparametric hypersurfaces}\label{cone-char}

Remarkably in \cite{pink2}, Pinkall discovered the cone over a Dupin hypersurface is still a Dupin hypersurface. In fact it is easily seen
that the cone over an isoparametric hypersurface in a sphere is always a Dupin hypersurface of
constant M\"{o}bius curvatures. In this section we will calculate the M\"{o}bius invariants,  and using M\"{o}bius invariants to characterize cones
over isoparametric hypersurfaces in spheres. Let us start with the construction of cones over hypersurfaces in spheres.

\begin{Definition} For $1 \leq k \leq n-1$,
let $u: {M}^k\longrightarrow \mathbb{S}^{k+1}\subset \mathbb{R}^{k+2}$ be an immersed hypersurface in
$\mathbb{S}^{k+1}$. The cone over $u$ in $\mathbb{R}^{n+1}$ is given as
\begin{equation*}
f(t,y,p)=(y,tu(p)): \mathbb{R}^+\times \mathbb{R}^{n-k-1}\times {M}^k\longrightarrow \mathbb{R}^{n+1}.
\end{equation*}
\end{Definition}

It is easily calculated that the first fundamental form of the cone $f$ is $I_f =  dt^2 + |dy|^2 +t^2 I_u$
and the second fundamental form of the cone is $II_f = t~II_u$, where $I_u$ and $II_u$ are the first and
second fundamental forms of the hypersurface $u$ in the sphere $\mathbb{S}^{k+1}$
respectively. The principal curvatures of the cone $f$ are
\begin{equation}\label{princ}
\underbrace{0,\cdots,0}_{n-k},\frac{1}{t}\lambda_1,\cdots,\frac{1}{t}\lambda_k,
\end{equation}
where $\{\lambda_1,\cdots,\lambda_k\}$ are the principal curvatures of $u$. Hence
$$
\rho^2 = \frac  n{n-1}(|II_u|^2 - \frac {k^2}nH_u^2) \frac 1{t^2},
$$
the M\"{o}bius metric of the cone is
\begin{equation}\label{metric}
g = \rho^2 I_f =  \rho_0^2 (\frac {dt^2 + |dy|^2}{t^2} + I_u),
\end{equation}
and the M\"{o}bius position vector of the cone  is
$$
Y(t, y, p) =\frac {\rho_0}t (\frac{1+t^2+|y|^2}{2},\frac{1-t^2-|y|^2}{2}, y, tu(p)):
\mathbb{R}^+\times \mathbb{R}^{n-k-1}\times{M}^k\to
C^{n+2}_+\subset \mathbb{R}^{n+3}_1,
$$
where $\rho_0^2=\frac{n}{n-1}(|II_u|^2-\frac{k^2}{n}H_u^2)$. Note that
\begin{equation}\label{id}
i(t, y) = (\frac{1+t^2+|y|^2}{2t},\frac{1-t^2-|y|^2}{2t},\frac{y}{t}):
\mathbb{R}^+\times \mathbb{R}^{n-k-1}=\mathbb{H}^{n-k}\to \mathbb{H}^{n-k}\subset \mathbb{R}^{n-k+1}_1
\end{equation}
is nothing but the identity map of $\mathbb{H}^{n-k}$, since $\mathbb{R}^+\times \mathbb{R}^{n-k-1}=\mathbb{H}^{n-k}$ is the upper
half-space endowed with the standard hyperbolic metric. We may now rewrite the M\"{o}bius position vector of the cone $f$ as
\begin{equation}\label{mop2}
Y=\rho_0(i(t, y), u):\mathbb{R}^+\times \mathbb{R}^{n-k-1}\times{M}^k \to C^{n+2}_+\subset \mathbb{R}^{n+3}_1.
\end{equation}
Consequently we have

\begin{Lemma}\label{redu}
Let $u:{M}^k\to \mathbb{S}^{k+1}$ be an immersed hypersurface in $\mathbb{S}^{k+1}\subset\mathbb{R}^{k+2}$ and
 \begin{equation*}
\frac 1{\rho_0}Y= (i(t, y), u):\mathbb{R}^+\times \mathbb{R}^{n-k-1}\times{M}^k\to \mathbb{H}^{n-k}\times\mathbb{S}^{k+1}\subset \mathbb{R}^{n+3}_1
\end{equation*}
for smooth positive function $\rho_0$. Suppose that $Y$ is the M\"{o}bius position vector for an immersed hypersurafce
$$f:\mathbb{R}^+\times \mathbb{R}^{n-k-1}\times{M}^k\to \mathbb{R}^{n+1}.$$
Then $f$ is a cone over $u$ and $\rho_0^2=\frac{n}{n-1}(|II_u|^2-\frac{k^2}{n}H_u^2)$.
\end{Lemma}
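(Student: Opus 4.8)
The plan is to recover $f$ from its M\"{o}bius position vector $Y = \rho_0(i(t,y),u)$ and to show the recovered $f$ is exactly a cone over $u$. The starting observation is that the expression $\frac{1}{\rho_0}Y = (i(t,y),u)$ already displays $Y$ in the standard form of a M\"{o}bius position vector: using \eqref{id}, one has $i(t,y) = \left(\frac{1+t^2+|y|^2}{2t},\frac{1-t^2-|y|^2}{2t},\frac{y}{t}\right)$, which lands in the light cone $C^{n-k+1}_+$ inside $\mathbb{R}^{n-k+1}_1$, and $u\in \mathbb{S}^{k+1}\subset\mathbb{R}^{k+2}$. So the first two coordinates of $Y$ are $\frac{\rho_0}{t}\cdot\frac{1+t^2+|y|^2}{2}$ and $\frac{\rho_0}{t}\cdot\frac{1-t^2-|y|^2}{2}$, and the remaining coordinates are $\frac{\rho_0}{t}(y,tu(p))$. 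Comparing with the canonical form $Y = \rho(\tilde f)\left(\frac{1+|\tilde f|^2}{2},\frac{1-|\tilde f|^2}{2},\tilde f\right)$, I would read off $\tilde f = (y,tu(p))$ and the scale factor $\rho(\tilde f) = \rho_0/t$, since then $|\tilde f|^2 = |y|^2 + t^2|u|^2 = |y|^2 + t^2$ matches the first two components term by term.

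Once $\tilde f = (y,tu(p))$ is identified, it is precisely the cone over $u$ by the Definition preceding the Lemma (with the roles of the $\mathbb{R}^{n-k-1}$ and $\mathbb{R}^+$ factors in the right places), so $f$ and $\tilde f$ have the same M\"{o}bius position vector. The next step is to invoke uniqueness: by the theorem of \cite{w} recalled in Section \ref{mobius-inv}, the pair $\{g,B\}$ determines a hypersurface in $\mathbb{R}^{n+1}$ up to M\"{o}bius transformation when $n\geq 3$, and in fact the M\"{o}bius position vector itself determines the hypersurface up to M\"{o}bius transformation via the covariance relation \eqref{co-var}. Hence $f$ agrees with the cone $\tilde f$ up to a M\"{o}bius transformation of $\mathbb{R}^{n+1}$; since the statement only claims $f$ \emph{is} a cone over $u$ (a statement understood modulo M\"{o}bius equivalence in this context), this gives the conclusion. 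Finally, the formula $\rho_0^2 = \frac{n}{n-1}\left(|II_u|^2 - \frac{k^2}{n}H_u^2\right)$ is forced: the scale factor of any M\"{o}bius position vector must equal $\rho(\tilde f)$ with $(\rho(\tilde f))^2 = \frac{n}{n-1}(|II_{\tilde f}|^2 - nH_{\tilde f}^2)$, and plugging in the principal curvatures \eqref{princ} of the cone — namely $n-k$ zeros together with $\frac{1}{t}\lambda_1,\dots,\frac{1}{t}\lambda_k$ — one computes $|II_{\tilde f}|^2 - nH_{\tilde f}^2 = \frac{1}{t^2}\left(|II_u|^2 - \frac{k^2}{n}H_u^2\right)$, so $\rho(\tilde f) = \rho_0/t$ gives exactly the asserted value of $\rho_0^2$.

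The main obstacle is not any single hard computation but rather being careful about what ``$Y$ is the M\"{o}bius position vector for an immersed hypersurface $f$'' is allowed to mean: one must check that the given $Y$, as a map into $C^{n+2}_+$, indeed has the algebraic shape of a M\"{o}bius position vector (first two coordinates reconstructing $\pm\frac{1+|\tilde f|^2}{2}$ after dividing by the common scale), and that the scale factor extracted this way is automatically the correct conformal factor $\rho(\tilde f)$ rather than an arbitrary positive function. This is where the identity \eqref{id} does the real work: it guarantees that the hyperbolic-space factor contributes $|y|^2+t^2$ to $|\tilde f|^2$ in a way compatible with the $\frac{1}{t}$ weight, so that the decomposition is consistent and $\rho_0$ is genuinely a function on $M^n = \mathbb{R}^+\times\mathbb{R}^{n-k-1}\times M^k$ and not over-determined. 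Once this consistency check is in place, the rest — identifying the cone and invoking uniqueness of the hypersurface from its M\"{o}bius position vector — is essentially bookkeeping.
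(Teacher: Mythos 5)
Your first paragraph is the whole proof, and it coincides with what the paper leaves implicit after \eqref{mop2}: one reads the hypersurface off directly from the M\"obius position vector. Writing $Y=(Y_0,\ldots,Y_{n+2})$, the canonical form $Y=\rho(f)\bigl(\tfrac{1+|f|^2}{2},\tfrac{1-|f|^2}{2},f\bigr)$ gives $Y_0+Y_1=\rho(f)$ and $f=(Y_2,\ldots,Y_{n+2})/(Y_0+Y_1)$; plugging in $Y=\rho_0(i(t,y),u)$ and \eqref{id} gives $Y_0+Y_1=\rho_0/t$, hence $f=(y,tu(p))$, the cone, and $\rho(f)=\rho_0/t$, which combined with $\rho(f)^2=\tfrac{n}{n-1}(|II_f|^2-nH_f^2)$ and \eqref{princ} forces the stated value of $\rho_0^2$. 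This is correct and is essentially the paper's argument.

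The second paragraph, however, is not only unnecessary but a small step backward. The assignment $Y\mapsto f$ described above is pointwise well defined as soon as $Y_0+Y_1>0$ (which holds here, since $Y_0+Y_1=\rho_0/t>0$), so two hypersurfaces with the \emph{same} M\"obius position vector are literally equal, not merely M\"obius equivalent. Appealing to the $\{g,B\}$ uniqueness theorem of \cite{w} or to the covariance relation \eqref{co-var} -- which govern what happens when $Y$ is \emph{transformed} by $\textup{O}^+(n+2,1)$ -- introduces a spurious ``up to M\"obius transformation'' and thereby weakens what you already established: the Lemma asserts $f$ \emph{is} the cone $(y,tu(p))$, and your first paragraph proves exactly that. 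Delete the detour; the direct reading-off plus the curvature computation of $\rho_0$ is a complete proof.
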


Lemma \ref{redu} is useful when we derive the cone structure of a Dupin hypersurface $f$ from the cone structure of its M\"{o}bius position vector $Y$. It is also easily seen that

\begin{Lemma}\label{32}
Let $u: {M}^k\longrightarrow \mathbb{S}^{k+1}$ be an immersed hypersurface. Then the cone
$$f (t, y, p) =(y,tu (p)): \mathbb{R}^+\times \mathbb{R}^{n-k-1}\times{M}^k\longrightarrow \mathbb{R}^{n+1}$$
is a Dupin hypersurface of constant M\"{o}bius curvatures if and only if the hypersurface $u$ is an isoparametric hypersurface in $\mathbb{S}^{k+1}$.
\end{Lemma}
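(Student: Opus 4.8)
The plan is to prove the equivalence in Lemma \ref{32} by analyzing how the principal curvatures of the cone relate to those of $u$, and then matching this against the characterization of Dupin hypersurfaces with constant M\"obius curvatures given in Theorem \ref{is1}. Since the cone $f(t,y,p)=(y,tu(p))$ has already been shown (via \eqref{princ}) to have principal curvatures $0$ (with multiplicity $n-k$) together with $\frac1t\lambda_1,\dots,\frac1t\lambda_k$, where $\lambda_1,\dots,\lambda_k$ are the principal curvatures of $u$, and since the curvature surfaces of the cone through a point decompose into the flat directions (the $\mathbb{R}^+\times\mathbb{R}^{n-k-1}$ factor) and the lifts of curvature surfaces of $u$, the Dupin condition for $f$ is easy to track: the zero principal curvature is automatically constant along its (totally geodesic) curvature leaves, while the principal curvature $\frac1t\lambda_j$ is constant along the corresponding curvature surface of the cone if and only if $\lambda_j$ is constant along the corresponding curvature surface of $u$ (the factor $\frac1t$ is constant on each such leaf since the leaf sits inside a slice $\{t=\text{const}\}\times\{y=\text{const}\}\times(\text{curvature surface of }u)$). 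Hence $f$ is Dupin if and only if $u$ is Dupin.

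First I would make this curvature-surface bookkeeping precise: use the product structure $I_f=dt^2+|dy|^2+t^2I_u$ and $II_f=t\,II_u$ to see that the shape operator of $f$ is block diagonal, with the zero block on the $(n-k)$-dimensional flat distribution and the block $\frac1t\mathcal{A}_u$ on the remaining $k$ directions, so principal spaces of $f$ are exactly the flat distribution together with the pullbacks of principal spaces of $u$. Then I would observe that any curvature surface of $f$ is either an open subset of a leaf of the flat distribution (on which the principal curvature $0$ is trivially constant) or is contained in a set of the form $\{t_0\}\times\{y_0\}\times S$ with $S$ a curvature surface of $u$; along the latter, $\frac1t\lambda_j=\frac1{t_0}\lambda_j$ is constant iff $\lambda_j|_S$ is constant. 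This yields the equivalence "$f$ Dupin $\iff u$ Dupin."

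Next I would bring in the M\"obius curvatures. By Theorem \ref{is1}, $f$ is a Dupin hypersurface with constant M\"obius curvatures if and only if it is M\"obius isoparametric, i.e. its M\"obius form vanishes and its M\"obius principal curvatures are constant. From \eqref{prin-cur}, the M\"obius principal curvatures of $f$ are $b_i=\rho^{-1}(\lambda_i^f-H_f)$ where $\lambda_i^f$ runs over $0,\dots,0,\frac1t\lambda_1,\dots,\frac1t\lambda_k$ and $\rho^2=\frac{n}{n-1}(|II_u|^2-\frac{k^2}{n}H_u^2)\frac1{t^2}$, i.e. $\rho=\rho_0/t$ with $\rho_0$ a function of $p$ alone. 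A direct substitution shows $H_f=\frac1t\cdot\frac{k}{n}H_u$ and hence $b_i$ for the nonzero block equals $\rho_0^{-1}(\lambda_i-\frac kn H_u)$ — a function of $p$ only — while the zero block gives $b=-\rho_0^{-1}\frac kn H_u$, again a function of $p$ only. Therefore all $b_i$ are constant (as functions on $M^n$) precisely when $\rho_0$, $H_u$ and all $\lambda_i$ are constant on ${M}^k$, which together with the multiplicity-preserving relation $b_i-b_j$ proportional to $\lambda_i-\lambda_j$ forces $u$ to have constant principal curvatures, i.e. $u$ isoparametric; conversely if $u$ is isoparametric then $\rho_0,H_u,\lambda_i$ are all constant and so are the $b_i$. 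Finally I would dispatch the M\"obius form: using the formula $C_i=-\rho^{-2}[e_i(\lambda_i^f)-\rho\,e_i(b_i)]$ from the derivation preceding Theorem \ref{is1} (equivalently, the direct computation in \eqref{re2}), one checks $C\equiv 0$ for the cone as soon as the $b_i$ are constant, since then $e_i(b_i)=0$ and $e_i(\lambda_i^f)=0$ along curvature directions by the Dupin property. Combining, $f$ is a Dupin hypersurface of constant M\"obius curvatures $\iff$ $u$ is isoparametric in $\mathbb{S}^{k+1}$.

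The main obstacle, and the step deserving the most care, is verifying that constancy of all the M\"obius principal curvatures $b_i$ of the cone genuinely forces $\rho_0$ (and hence $|II_u|^2-\frac{k^2}{n}H_u^2$) to be constant on ${M}^k$, rather than merely forcing the ratios $\frac{\lambda_i-\lambda_j}{\lambda_i-\lambda_s}$ to be constant; this is where one must use the normalization $\sum_i B_{ii}=0$ and $\sum_{ij}B_{ij}^2=\frac{n-1}{n}$ from \eqref{equa6} — exactly the mechanism already exploited in Proposition \ref{pro4} — to upgrade "constant M\"obius curvatures" to "constant M\"obius principal curvatures" and thereby kill the conformal factor's dependence on $p$. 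Everything else is routine substitution into \eqref{re2}, \eqref{prin-cur}, and \eqref{princ} together with the block structure of the cone's shape operator.
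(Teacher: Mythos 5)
Your reduction of the Dupin condition for the cone to the Dupin condition for $u$, and your computation of the M\"obius data $\rho=\rho_0/t$, $b_0=-\rho_0^{-1}\frac kn H_u$, $b_i=\rho_0^{-1}(\lambda_i-\frac kn H_u)$, are all correct. But there is a genuine gap in the reverse implication, and the fix you propose does not close it.

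You assert that all $b_i$ are constant ``precisely when'' $\rho_0$, $H_u$ and all $\lambda_i$ are constant, and you say the nontrivial direction is handled by the normalization $\sum_iB_{ii}=0$, $\sum_{ij}B_{ij}^2=\frac{n-1}{n}$ from \eqref{equa6}, as in Proposition~\ref{pro4}. This does not work. Those two identities are \emph{automatically} satisfied by $b_i=\rho_0^{-1}(\lambda_i-\frac kn H_u)$ and $b_0=-\rho_0^{-1}\frac kn H_u$ once $\rho_0^2=\frac{n}{n-1}(|II_u|^2-\frac{k^2}{n}H_u^2)$, regardless of whether $\rho_0$ is constant, so they impose no further constraint. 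Concretely: take $\lambda_i(p)=c_i\varphi(p)$ for fixed constants $c_1,\dots,c_k$ (not all zero) and an arbitrary positive function $\varphi$ on $M^k$; then $\rho_0$ is a constant multiple of $\varphi$ and every $b_i$ is constant, yet $\rho_0$, $H_u$, $\lambda_i$ are non-constant. Constancy of the $b_i$ therefore yields only $\lambda_i=(b_i-b_0)\rho_0$, i.e.\ proportionality to $\rho_0$. The mechanism in Proposition~\ref{pro4} upgrades constant ratios $\mathbb{M}_{ijk}$ to constant $b_i$ --- which you already have from Theorem~\ref{is1} --- and cannot be run again to upgrade constant $b_i$ to constant $\rho_0$.

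What actually closes the gap is the Dupin property, and this is exactly what the paper does directly, without passing through $\rho_0$ at all. Constant M\"obius curvatures for the cone give, via \eqref{princ}, that all ratios $\lambda_i/\lambda_j$ of the principal curvatures of $u$ are constant. Because the cone is Dupin and the curvature surfaces for the nonzero principal curvatures sit in slices with $t$ and $y$ fixed, each $\lambda_i$ is constant along its own principal direction of $u$. Differentiating the constant ratio $\lambda_i/\lambda_j$ along a principal direction $\tilde e$ of $\lambda_j$ with $\lambda_j\neq 0$, and using $\tilde e(\lambda_j)=0$, forces $\tilde e(\lambda_i)=0$ as well; doing this for all principal directions shows each $\lambda_i$ is constant in every direction, i.e.\ $u$ is isoparametric. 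In your framing: to kill the $p$-dependence of $\rho_0$ you must feed in $\tilde e(\lambda_i)=0$ from the Dupin hypothesis, not the normalization of $B$. Your scaffolding and the forward direction are fine, but the last step of the reverse direction needs the Dupin input rather than the normalization, and the paper's direct argument avoids the M\"obius-invariant detour altogether.
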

\begin{proof}  From \eqref{princ} it is very clear that, if $u$ is isoparametric, then the cone $f$ is a Dupin hypersurface of constant M\"{o}bius
curvatures. To see the other direction, assume $f$ is Dupin of constant M\"{o}bius curvatures. Then, from \eqref{princ} and the fact that all M\"{o}bius curvatures
are constant, it is straightforward to see that all the ratios $\frac {\lambda_i}{\lambda_j}$ are constant. Now, one knows from the fact that $f$ is Dupin, each principal
curvature $\lambda_i$ is constant along the principal direction $e_i$ that is perpendicular to the homogeneous direction $t$. Therefore each principal curvature $\lambda_i$ is in fact constant in all directions. Thus the proof is complete.
\end{proof}

For the cone $f$ over an isoparametric hypersurface $u$ in the sphere $\mathbb{S}^{k+1}$ the eigenvalues of the Blaschke tensor and the M\"{o}bius second fundamental form are
\begin{equation}\label{cone-form}
A = diag(\underset{n-k} {\underbrace{\mu, \cdots,\mu}}, a_{n-k+1}, \cdots,a_n),  \
B = diag(\underset{n-k} {\underbrace{\lambda, \cdots, \lambda}}, b_{n-k+1}, \cdots, b_n), \text{ and }
C  =0,
\end{equation}
where
\begin{equation}\label{cone-inv}
a_\alpha  = -\lambda b_\alpha -\mu
\end{equation}
and
$$
 \mu = -\frac 1{2\rho_0^2} (1 + \frac {k^2}{n^2}H_u^2),~~ ~~\lambda =  - \frac 1{\rho_0}\frac knH_u,~~~
b_\alpha  = \frac 1{\rho_0} (\lambda_{\alpha - n+k} - \frac knH_u),
$$
for $n-k+1\leq\alpha\leq n$, following the equations \eqref{re2}. From \eqref{metric}, we know that the M\"{o}bius metric $g$ is a Riemannian product,
 that is, $(M^n,g)=(\mathbb{H}^{n-k}, \rho_0^2g_{\mathbb{H}})\times (M^k, \rho^2_0I_u)$ locally. Moreover
\begin{equation*}
B|_{T\mathbb{H}^{n-k}}=\lambda \rho_0^2g_{\mathbb{H}},~ A|_{T\mathbb{H}^{n-k}}=\mu \rho^2_0g_{\mathbb{H}}, ~ A|_{TM^k}=-\lambda B|_{TM^k}-\mu \rho_0^2I_u,
\text{ and } \lambda^2+2\mu = - \frac 1{\rho^2_0} < 0.
\end{equation*}
It is very important to observe that both the Blaschke tensor $A$ and the M\"{o}bius second fundamental form $B$ are so-called
isoparametric tensors according to Definition \ref{iso-t}. Moreover,  and equations \eqref{cone-form} and \eqref{cone-inv} are sufficient to
characterize a cone $f$ over an isoparametric surface $u$ in a sphere. Namely,

\begin{Theorem}\label{cone-type} Suppose that $f: M^n\to \mathbb{R}^{n+1}$ is a Dupin hypersurface
with $r (\geq 3)$ distinct principal curvatures and constant M\"{o}bius curvature.
And suppose that \eqref{cone-form} and \eqref{cone-inv} hold for some constants $a_\alpha, b_\alpha, \lambda, \mu$.
Then $f$ is M\"{o}bius equivalent to a cone over an isoparametric hypersurface in $\mathbb{S}^{k+1}$.
\end{Theorem}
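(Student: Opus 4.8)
The plan is to recover the M\"obius position vector $Y$ of $f$ in the cone form \eqref{mop2} and then apply Lemma \ref{redu} and Lemma \ref{32}. Throughout I assume $c:=\lambda^2+2\mu<0$; this is automatic for an honest cone (where $c=-1/\rho_0^2$) and holds in the application of this theorem, while the borderline $c=0$ (where \eqref{cone-inv} forces $A=-\lambda B-\mu g$ on all of $M^n$) is exactly the situation of Corollary \ref{iso-type}, and $c>0$ cannot occur when $r\ge 3$ since the argument below, run with the roles of the Lorentzian and Euclidean factors interchanged, would exhibit $f$ as M\"obius equivalent to an isoparametric hypersurface in $\mathbb H^{n+1}$, which has at most two distinct principal curvatures.

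First I would fix an orthonormal frame $\{E_i\}$ of the M\"obius metric $(M^n,g)$ diagonalizing $A$ and $B$ as in \eqref{cone-form}, with $E_1,\dots,E_{n-k}$ spanning the distribution $V_1$ on which $A=\mu g$, $B=\lambda g$, and $E_{n-k+1},\dots,E_n$ spanning $V_2$. Since $\lambda,\mu$ are constant and $C=0$, the integrability conditions \eqref{equa1} and \eqref{equa3} say that $A$, $B$, and hence $\tilde A:=A+\lambda B+\mu g$, are Codazzi tensors, and by \eqref{cone-inv} the tensor $\tilde A$ has exactly the two constant eigenvalues $c$ (on $V_1$) and $0$ (on $V_2$). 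A Codazzi tensor with two constant eigenvalues has parallel, totally geodesic eigendistributions (cf.\ the facts on isoparametric tensors in Appendix A), so the mixed connection forms $\omega_{a\alpha}$ vanish (Latin indices $\le n-k$, Greek $>n-k$), $(M^n,g)$ is locally a Riemannian product $M_1^{n-k}\times M_2^k$ with $TM_1=V_1$, $TM_2=V_2$, and \eqref{equa4} shows $M_1$ has constant curvature $c<0$.

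The crucial step, which I expect to be the main obstacle, is to make the structure equations \eqref{stru} close up on a fixed subspace of $\mathbb R^{n+3}_1$. Guided by an explicit computation of $N$ and $\xi$ for an honest cone, I would introduce along $Y$ the vectors
\[
\eta:=\xi+\lambda Y,\qquad \zeta:=N-(\lambda^2+\mu)\,Y-\lambda\,\xi,
\]
and verify, using $\omega_{a\alpha}=0$, $C=0$, the diagonal forms of $A,B$, and \eqref{cone-inv}, that
\[
d\eta=\sum_{\alpha}(\lambda-b_\alpha)\,\omega_\alpha Y_\alpha,\qquad d\zeta=-c\sum_{\alpha}\omega_\alpha Y_\alpha,\qquad dY_\alpha=-\omega_\alpha\zeta+(b_\alpha-\lambda)\,\omega_\alpha\eta+\sum_{\beta}\omega_{\alpha\beta}Y_\beta.
\]
It is precisely \eqref{cone-inv} that produces the cancellation of all $Y$- and $Y_a$-contributions here, so that the rank-$(k+2)$ subbundle $\mathcal V_2:=\mathrm{span}\{\zeta,\eta,Y_{n-k+1},\dots,Y_n\}$ of the trivial bundle $M^n\times\mathbb R^{n+3}_1$ is invariant under $d$, i.e.\ it is a \emph{fixed} linear subspace $\Pi_2\subset\mathbb R^{n+3}_1$. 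A short inner-product computation from \eqref{stru} gives $\langle\zeta,\zeta\rangle=-c>0$, $\langle\eta,\eta\rangle=1$, $\langle Y_\alpha,Y_\beta\rangle=\delta_{\alpha\beta}$, and mutual orthogonality, so $\Pi_2$ is positive definite of dimension $k+2$; hence $\mathbb R^{n+3}_1=\Pi_1\oplus\Pi_2$ is an orthogonal splitting with $\Pi_1$ Lorentzian of signature $(n-k,1)$, and the projections $P_{\Pi_i}$ are constant.

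Finally I would split $Y=Y^{(1)}+Y^{(2)}$ with $Y^{(i)}=P_{\Pi_i}Y$. From $\langle Y,\zeta\rangle=1$ and $\langle Y,\eta\rangle=\langle Y,Y_\alpha\rangle=0$ one gets $Y^{(2)}=-\tfrac1c\,\zeta$, hence $\langle Y^{(2)},Y^{(2)}\rangle=-\tfrac1c=:\rho_0^2>0$ and, since $\langle Y,Y\rangle=0$, $\langle Y^{(1)},Y^{(1)}\rangle=-\rho_0^2$. Moreover $dY^{(2)}=\sum_\alpha\omega_\alpha Y_\alpha$ and $dY^{(1)}=\sum_a\omega_a Y_a$, so $\tfrac1{\rho_0}Y^{(2)}$ is constant along the $V_1$-leaves and descends to an immersion $u\colon M^k\to\mathbb S^{k+1}\subset\Pi_2$, while $\tfrac1{\rho_0}Y^{(1)}$ is constant along the $V_2$-leaves and descends to a local isometry $M^{n-k}\to\mathbb H^{n-k}\subset\Pi_1$, which in adapted local coordinates is the identity map $i(t,y)$ of \eqref{id}. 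Choosing $T\in\textup{O}^+(n+2,1)$ carrying $\Pi_1,\Pi_2$ to the standard Lorentzian and Euclidean blocks, $TY=\rho_0\,(i(t,y),u)$ as in \eqref{mop2}, which by \eqref{co-var} is the M\"obius position vector of $L(T)f$; Lemma \ref{redu} then shows $L(T)f$ is a cone over $u$, and Lemma \ref{32} shows $u$ is an isoparametric hypersurface in $\mathbb S^{k+1}$, since $L(T)f$ inherits from $f$ the properties of being Dupin with constant M\"obius curvatures. Therefore $f$ is M\"obius equivalent to a cone over an isoparametric hypersurface in $\mathbb S^{k+1}$. Apart from the subspace-closing step and the preliminary reduction to $c<0$, the argument is bookkeeping with \eqref{stru}.
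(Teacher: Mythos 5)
Your proof is correct and follows the same overall architecture as the paper's: construct a moving frame adapted to the product structure, show that a fixed orthogonal decomposition $\mathbb{R}^{n+3}_1 = \Pi_1\oplus\Pi_2$ is preserved by the structure equations, split the M\"obius position vector $Y$ accordingly, recognize $\rho_0^{-1}Y^{(2)}$ as an immersion into $\mathbb{S}^{k+1}\subset\Pi_2$ and $\rho_0^{-1}Y^{(1)}$ as an isometry into $\mathbb{H}^{n-k}\subset\Pi_1$, and conclude via Lemmas \ref{redu} and \ref{32}. Up to normalization your frame is the paper's: $\eta=F$, $\zeta=\sqrt{-K}\,P$, $Y^{(1)}=\frac{1}{\sqrt{-K}}T$, $Y^{(2)}=\frac{1}{\sqrt{-K}}P$, and your displayed equations for $d\eta$, $d\zeta$, $dY_\alpha$ are exactly \eqref{stru1} rewritten. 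The one place you take a genuinely different route is in establishing $\omega_{a\alpha}=0$ and the local splitting of $(M^n,g)$. The paper obtains this by appealing to the proof of Theorem~\ref{redu3} in Section~\ref{proof-th1}; you instead observe that $\tilde A:=A+\lambda B+\mu g$ is Codazzi (using $C=0$ together with \eqref{equa1} and \eqref{equa3}) with exactly the two constant eigenvalues $c=\lambda^2+2\mu$ on $V_1$ and $0$ on $V_2$ --- the latter being precisely what \eqref{cone-inv} encodes --- hence parallel by Proposition~\ref{isop1}, giving the Riemannian splitting by Proposition~\ref{isop}. This is a cleaner, self-contained derivation: it makes the geometric content of \eqref{cone-inv} transparent and removes the forward dependence of Theorem~\ref{cone-type} on the argument of Section~\ref{proof-th1}. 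Your preliminary discussion also correctly flags that $\lambda^2+2\mu<0$ is an implicit hypothesis: the paper's proof asserts it without including it in the theorem statement, and your observation that $c=0$ is just the linearly dependent case covered by Corollary~\ref{iso-type} is a useful clarification.
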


\begin{proof} The equations \eqref{cone-form} and \eqref{cone-inv} implies that the Blaschke tensor $A$ and the M\"{o}bius second fundamental form $B$
are not linearly dependent, since $\lambda^2+2\mu<0$. From Theorem \ref{redu3} and its proof in Section \ref{proof-th1}, we know
\begin{equation}\label{bca}
R_{j\alpha j\alpha}=0 \text{ and } \omega_{j\alpha}=0 \text{ for } j =1,\cdots,n-k \text{ and } \alpha=n-k+1,\cdots,n
\end{equation}
and therefore
$$
\left\{\aligned d\omega_j & =\sum_m\omega_m\wedge\omega_{mj}=\sum_{m\leq n-k}\omega_m\wedge\omega_{mj},~~ 1 \leq j \leq n-k\\
d\omega_\beta & =\sum_m\omega_m\wedge\omega_{m\beta}=\sum_{\alpha\geq n-k+1}\omega_\alpha\wedge\omega_{\alpha\beta},~~ n-k+1\leq\beta\leq n
\endaligned\right..
$$
Thus the distributions $\mathbb{D}_1=Span\{E_{1},\cdots,E_{n-k}\}$ and $\mathbb{D}_2=Span\{E_{n-k+1},E_{n-k+2},\cdots,E_n\}$ are integrable.
Recall the corresponding M\"{o}bius position vector
$$
Y = \rho(\frac {1 + |f|^2}2, \frac{1-|f|^2}2, f): M^n\to C^{n+2}_+\subset\mathbb{R}^{n+3}_1
$$
and set
\begin{equation}\label{frame}
\begin{split}
&F=\lambda Y+\xi,~~~P=\frac{1}{\sqrt{-K}}[-(\lambda^2+\mu)Y+N- \lambda\xi],\\
&T= \frac{-1}{\sqrt{-K}}(\mu Y+N-\lambda\xi),~~~\text{and } K=\lambda^2+2\mu.
\end{split}
\end{equation}
Then $\{F,P,T,Y_1,\cdots,Y_n\}$, where $Y_i = Y_*(E_i)$, is a local orthogonal frame along $M^n$ in
$\mathbb{R}^{n+3}_1$. From the structure equations \eqref{stru} and (\ref{bca}), we get the new structure equations
\begin{equation}\label{stru1}
\aligned
& \left\{\aligned dF & = \sum_{\alpha \geq n-k+1} (\lambda- b_\alpha)\omega_\alpha Y_\alpha\\
dP & =  \sqrt{-K} \sum_{\alpha\geq n-k+1} \omega_\alpha Y_\alpha\\
dY_\alpha & = ((b_\alpha - \lambda)F - \sqrt{-K}P)\omega_\alpha  +  \sum_{\beta\geq n-k+1} \omega_{\alpha\beta}Y_\beta
\endaligned\right. \\
& \left\{\aligned
dT & =\sqrt{-K}\sum_{j=1}^{n-k} \omega_jY_j \\
dY_j & =  \omega_j \sqrt{-K}T + \sum_{k=1}^{n-k}\omega_{jk} Y_k
\endaligned\right.
\endaligned
\end{equation}
From (\ref{stru1}), we know that the subspace $V=Span\{P,Y_{n-k+1},\cdots,Y_n,F\}$
and the orthogonal complement $V^{\perp}= Span\{T,Y_{1},\cdots,Y_{n-k}\}$ are parallel along $M^{n}$. We can assume that $V=\mathbb{R}^{k+2}$ and $V^{\perp}=\mathbb{R}_1^{n-k+1}$.
Let $M^{n-k}$ be an integrable submanifold of the distribution $\mathbb{D}_1=\{E_1,E_2,\cdots,E_{n-k}\}$ and $M^{k}$
an integrable submanifold of the distribution $\mathbb{D}_2=\{E_{n-k+1},\cdots,E_n\}$. From (\ref{stru1}), we know that $P$ is constant along
the variables in $M^{n-k}$ and hence
$$P: M^k\to \mathbb{R}_1^{n+3}$$
is an $k-$dimensional immersed submanifold. Similarly, we know that
$$T: {M}^{n-k}\to \mathbb{R}_1^{n+3}$$ is an $(n-k)-$dimensional immersed submanifold.
One may calculate that $<P,P>=1$ and conclude that
$$P: M^k\to\mathbb{S}^{k+1}\subset V= \mathbb{R}^{k+2} \subset \mathbb{R}^{n+3}_1$$
since $V$ is a fixed space-like subspace. Similarly, one may calculate $<T, T> =-1$ and conclude that, up to a M\"{o}bius transformation,
$$
T: M^{n-k} \to \mathbb{H}^{n-k}\subset V^\perp = \mathbb{R}^{n-k+1}_1\subset \mathbb{R}^{n+3}_1
$$
since $V^{\perp}$ is a fixed Lorentzian subspace in $\mathbb{R}_1^{n+3}$.
In the light of \eqref{gauss1}, which is a consequence of the integrability condition \eqref{equa4}, we know that the sectional curvature for the manifold $M^{n-k}$
is, for $i,j = 1, \cdots, n-k$,
$$
R_{ijij}[g_1] = \frac 1{-K} R_{ijij} [g] = \frac 1{-K} (\lambda^2 + 2\mu) = -1,
$$
which implies that $T$ is an isometry of $\mathbb{H}^{n-k}$.

Moreover we have
$$
Y=\frac{1}{\sqrt{-K}}(T+P)
=\frac{1}{\sqrt{-K}}(T, P): M^{n-k}\times M^k\to \mathbb{H}^{n-k}\times\mathbb{S}^{k+1} \subset \mathbb{R}_1^{n+3}$$
is the M\"{o}bius position vector of the cone over an isoparametric hypersurface $P$ according to Lemma \ref{redu} and Lemma \ref{32}. Thus the proof is complete.
\end{proof}


\section{Proof of Theorem \ref{th1}}\label{proof-th1}

In this section we present the proof for Theorem \ref{th1}. By now, according to Corollary \ref{iso-type} and Theorem \ref{cone-type}, to prove Theorem \ref{th1} it
suffices to prove the following:

\begin{Theorem}\label{redu3}
Let $f:M^n\to \mathbb{R}^{n+1}$ be a Dupin hypersurface with $r (\geq 3)$ distinct principal curvatures and constant M\"{o}bius curvatures.
Then either

(a) $A$ and $B$ are linearly dependent, that is,  $A  =  \lambda B + \mu g$ for
some constants $\lambda$ and $\mu$, or

(b)  the Riemannian manifold $(M^n,g)$ is locally reducible, that is, $(M^n,g)=(M_1,g_1)\times (M_2,g_2)$ locally. Moreover
$$B|_{TM_1}=\lambda g_1,~~A|_{TM_1}= \mu g_1, ~\text{ and } ~A|_{TM_2}=-\lambda B|_{TM_2}-\mu g_2,$$
for some constants $\lambda$ and $\mu$ such that $\lambda^2 + 2\mu < 0.$
\end{Theorem}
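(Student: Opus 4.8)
The plan is to recast the hypotheses as a statement about a pair of commuting isoparametric-type tensors and then to grind the integrability conditions together with the generalized Cartan identity. By Theorem~\ref{is1}, a Dupin hypersurface with constant M\"obius curvatures is a M\"obius isoparametric hypersurface, so throughout we may assume that the M\"obius form vanishes, $C=0$, and that the M\"obius principal curvatures $b_1,\dots,b_n$ are constant. With $C=0$, conditions \eqref{equa1} and \eqref{equa3} say precisely that the Blaschke tensor $A$ and the M\"obius second fundamental form $B$ are Codazzi tensors, while \eqref{equa2} reduces to $\sum_k(B_{ik}A_{kj}-B_{jk}A_{ki})=0$, i.e. $AB=BA$. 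Thus $B$ is an isoparametric tensor in the sense of Definition~\ref{iso-t} and $A$ is a Codazzi tensor commuting with it. I would then fix a local orthonormal frame $\{E_i\}$ adapted to the eigendistributions $V_1,\dots,V_r$ ($r\geq 3$) of $B$ that simultaneously diagonalizes $A$, writing $BE_i=b_iE_i$ and $AE_i=a_iE_i$ with the $b_i$ constant and $b_i=b_j$ exactly when $E_i,E_j$ lie in the same $V_\alpha$.

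The core of the proof is to extract enough rigidity from \eqref{equa4}, the Codazzi equations, and the generalized Cartan identity \eqref{tensor4} to determine $A$. From \eqref{equa4} one reads off the Gauss-type identity $R_{ijij}=b_ib_j+a_i+a_j$ for all $i\neq j$. Substituting this into the Codazzi equations \eqref{equa1} and \eqref{equa3} and into the generalized Cartan identity for the isoparametric tensor $B$, and using the normalizations $\sum_i b_i=0$, $\sum_i b_i^2=\tfrac{n-1}{n}$ and $\operatorname{tr}A=\tfrac1{2n}(1+n^2\kappa)$ from \eqref{equa6}, I expect to prove: (i) each eigenvalue $a_i$ is constant on its eigendistribution; and (ii) the connection form $\omega_{ij}$ vanishes whenever $E_i$ and $E_j$ lie in eigendistributions of $B$ whose associated pairs $(b,a)$ do not lie on one line $a=\lambda b+\mu$. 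Together (i) and (ii) force a dichotomy for the finite set $\{(b_i,a_i)\}\subset\mathbb{R}^2$: either all of these points are collinear, in which case $A=\lambda B+\mu g$ for constants $\lambda,\mu$ and we are in alternative (a); or all of them except the point(s) contributed by one distinguished eigenvalue $\lambda$ of $B$ lie on a single line, which is then necessarily $a=-\lambda b-\mu$, while the exceptional eigendistribution $V_\lambda$ contributes the point $(\lambda,\mu)$ with $\lambda^2+2\mu\neq 0$ (indeed $\lambda^2+2\mu=0$ would place $(\lambda,\mu)$ back on that line, i.e. inside case (a)).

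In the latter case I would read off the product structure. Set $\mathbb{D}_1=V_\lambda$, on which $B=\lambda g$ and $A=\mu g$, and $\mathbb{D}_2=\mathbb{D}_1^{\perp}$, on which $a_i=-\lambda b_i-\mu$ so that $A|_{\mathbb{D}_2}=-\lambda B|_{\mathbb{D}_2}-\mu g$. By (ii) the connection forms $\omega_{j\alpha}$ linking $\mathbb{D}_1$ to $\mathbb{D}_2$ vanish, so both distributions are parallel, and \eqref{equa4} gives $R_{j\alpha j\alpha}=b_jb_\alpha+a_j+a_\alpha=0$ for $E_j\in\mathbb{D}_1$, $E_\alpha\in\mathbb{D}_2$ (this is \eqref{bca}, the input used in Theorem~\ref{cone-type}). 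The local de Rham decomposition then gives $(M^n,g)=(M_1,g_1)\times(M_2,g_2)$ with $B|_{TM_1}=\lambda g_1$, $A|_{TM_1}=\mu g_1$ and $A|_{TM_2}=-\lambda B|_{TM_2}-\mu g_2$. Finally, since for $E_i,E_j\in\mathbb{D}_1$ the identity \eqref{equa4} makes the sectional curvature of $M_1$ equal to the constant $\lambda^2+2\mu$, a sign argument --- excluding $\lambda^2+2\mu\geq 0$ by means of the Cartan identity and \eqref{equa6} --- yields $\lambda^2+2\mu<0$, which is alternative (b).

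The main obstacle is Step (ii): establishing simultaneously that the $a_i$ are block-constant and that the only surviving off-block connection forms occur between eigendistributions carrying collinear $(b,a)$ data. This requires careful control of the off-diagonal covariant derivatives $A_{ij,k}$ and $B_{ij,k}$ when the indices spread over two or three distinct eigenspaces of $B$, and it is exactly here that the hypothesis $r\geq 3$ is essential: with at least three distinct M\"obius principal curvatures the generalized Cartan identity is overdetermined enough to pin down the $a_i$, whereas for $r=2$ it carries no content. Once (i) and (ii) are established, the rest is the de Rham splitting and the short curvature computations above, and Theorem~\ref{th1} follows by combining Theorem~\ref{redu3} with Corollary~\ref{iso-type} and Theorem~\ref{cone-type}, as noted in the text.
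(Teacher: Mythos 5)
Your outline correctly reconstructs the framework of the paper's proof: translate constant M\"obius curvatures into $C=0$ plus constant $b_i$ via Theorem~\ref{is1}, recognize $B$ and $A$ as commuting Codazzi tensors with $B$ isoparametric, use the Gauss-type identity $R_{ijij}=b_ib_j+a_i+a_j$ from \eqref{equa4}, and observe that $A$ too must be isoparametric before attempting a de Rham splitting. This is precisely the paper's strategy, and your step (i) is Theorem~\ref{blas}.

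However there is a genuine gap at the heart of the argument, and you flag it yourself. Your step (ii) --- that $\omega_{ij}=0$ whenever $(b_i,a_i)$ and $(b_j,a_j)$ ``do not lie on one line $a=\lambda b+\mu$'' --- is circular as stated: any two points in the plane lie on some line, so you must mean a \emph{distinguished} line, but which one? The existence of such a distinguished line carrying all-but-one of the pairs is exactly the dichotomy you then want to deduce, so you cannot use it as a hypothesis. The paper makes this non-circular by fixing the base point $(a_{\bar 1},b_{\bar 1})$ with the smallest $B$-eigenvalue, considering all sets $S_{(i)}(\varepsilon)$ of pairs lying on a common line through a given pair, and running a delicate slope-comparison argument (Lemmas \ref{le2}, \ref{le31}, \ref{le5}, \ref{le4}, \ref{case2-1}--\ref{case2-3-4} and Theorems \ref{case2}, \ref{case1}, \ref{case3}) in which the signs of $R_{ijij}$ provided by the generalized Cartan identity, the inequality $b_{i_1}+\varepsilon<0<b_{i_t}+\varepsilon$ for any line with three or more pairs, and the intercept bound $\varepsilon^2-2d<0$ interact to rule out every ``branched'' configuration of pairs. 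None of that content is present in the proposal; it is replaced by ``I expect to prove.'' Relatedly, your excursion $\lambda^2+2\mu\neq 0 \Rightarrow \lambda^2+2\mu<0$ is asserted as ``a sign argument \dots by means of the Cartan identity and \eqref{equa6},'' but \eqref{equa6} alone does not give the strict inequality --- in the paper it drops out of the same slope analysis (and, in the $s>r$ case, from Lemma~\ref{ble2}: $b_{\bar k}^2+a_{k_1}+a_{k_2}=0$ with $a_{k_1}<a_{k_2}$ forces $b_{\bar k}^2+2a_{k_1}<0$). So the proposal is a faithful top-level outline of the paper's proof, but the combinatorial core that the theorem is really about is missing.
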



\subsection{Outline of the proof of Theorem \ref{redu3}}

The main idea of the proof of Theorem \ref{redu3} is to use two commuting isoparametric tensors (cf. Definition \ref{iso-t}) to capture the geometric structure.
We leave the discussions on basics of isoparametric tensors including two commuting isoparametric tensors in Appendix A.
We will first show that the M\"{o}bius second fundamental form $B$ and the Blaschke tensor $A$ are commuting isoparametric tensors. Then we will show that
commuting isoparametric tensors $A$ and $B$ that satisfy the condition \eqref{equa4} will be either linearly dependent or cause the hypersurface to be reducible.

Let $f:M^n\to \mathbb{R}^{n+1}$ be a Dupin hypersurface of constant M\"{o}bius curvatures. From Theorem \ref{is1} (cf. \cite{rod}) we know that
its M\"{o}bius form $C$ vanishes and its M\"{o}bius principal curvatures $\{b_i\}$ are all constant.
Immediately from equations \eqref{equa1}, \eqref{equa2} and \eqref{equa3}, we know that the M\"obius second fundamental form $B$ and
the Blaschke tensor $A$ are two commuting Codazzi tensors. Moreover, $B$ is in fact an isoparametric tensor according to Definition \ref{iso-t}.

To make it more clearer about the behaviors of these two tensors, we can choose a local orthonormal basis $\{E_1,\cdots,E_n\}$ for $TM^n$
with respect to the M\"{o}bius metric $g$  such that
\begin{equation}\label{coe}
\begin{split}
&\left(A_{ij}\right)=diag(a_1,\cdots,a_n),\\
&\left(B_{ij}\right)=diag(b_1,\cdots,b_n)=diag(b_{\bar{1}},\cdots,b_{\bar{1}},b_{\bar{2}},\cdots,b_{\bar{2}},\cdots,b_{\bar{r}},\cdots,b_{\bar{r}}).
\end{split}
\end{equation}
Let $[i]=\{k|b_k=b_i\}$. Then $V_{b_i}=Span\{E_k|k\in [i]\}$ is the eigenspace of $B$ corresponding to the eigenvalue $b_i$.
Since $B$ is an isoparametric tensor, from \eqref{tensor1} and \eqref{tensor2}, we know
\begin{equation}\label{w2}
\left\{\begin{split}
&B_{ij,k}=0 \text{ when }~[i]=[j] ~~or~~[i]=[k], \\
&\omega_{ij}=\sum_k\frac{B_{ij,k}}{b_i-b_j}\omega_k \text{ when }~~[i]\neq [j]
\end{split}\right.
\end{equation}
and
\begin{equation}\label{cur1}
R_{ijij}=\sum_{k\notin [i],[j]}\frac{2B^2_{ij,k}}{(b_i-b_k)(b_j-b_k)} \text{ when }~~[i]\neq [j].
\end{equation}

One of the important steps in our proof is to show that the Blaschke tensor $A$ is also an isoparametric tensor. That is to show that eigenvalues
$\{a_1, \cdots, a_n\}$ are all constant according to Definition \ref{iso-t}.

\begin{Theorem}\label{blas}
Let $f:M^n\to \mathbb{R}^{n+1}$ be a M\"{o}bius isoparametric hypersurface without umbilical points. Then the eigenvalues of the Blaschke tensor
$\{a_1,\cdots,a_n\}$ are all constant.
\end{Theorem}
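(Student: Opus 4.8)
The plan is to exploit the two integrability conditions that survive when $C\equiv 0$ and all $b_i$ are constant: namely \eqref{equa1}, which now reads $A_{ij,k}=A_{ik,j}$ (so $A$ is a Codazzi tensor), together with the Ricci identity \eqref{equa5}, which expresses the Ricci tensor of $g$ as $R_{ij}=-\sum_k B_{ik}B_{kj}+(\mathrm{tr}\,\mathbf{A})\delta_{ij}+(n-2)A_{ij}$. Since $B$ is diagonal with constant entries in the frame \eqref{coe}, the first term $-\sum_k B_{ik}B_{kj}=-b_i^2\delta_{ij}$ is already constant-coefficient and diagonal. Therefore, once we know $\mathrm{tr}\,\mathbf{A}$ is constant, the off-diagonal part of $A$ is determined by the Ricci tensor of $g$, and the diagonal entries $a_i$ differ from $-R_{ii}$ by the universal constant $\mathrm{tr}\,\mathbf{A}-b_i^2$. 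So the two things to pin down are: (i) $\mathrm{tr}\,\mathbf{A}$ is constant, and (ii) the frame \eqref{coe} can be chosen so that $A$ and $B$ are simultaneously diagonal, i.e. $A$ preserves the eigenspaces $V_{b_i}$ of $B$.

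For (ii) I would argue directly from the commuting-Codazzi structure: apply equation \eqref{equa1} (with $C=0$) in the form $A_{ij,k}=A_{ik,j}$ and pair it with the structure equations for $B$ in \eqref{w2}, which force $\omega_{ij}$ to lie in a controlled form whenever $[i]\neq[j]$ and vanish whenever $[i]=[j]$. Differentiating the relation $\sum_k B_{ik}A_{kj}-\sum_k B_{jk}A_{ki}=C_{i,j}-C_{j,i}=0$ (this is \eqref{equa2}) gives $(b_i-b_j)A_{ij}=0$, so $A_{ij}=0$ whenever $b_i\neq b_j$; hence in the frame \eqref{coe} the tensor $A$ is automatically block-diagonal with respect to the eigenspace decomposition of $B$, and within each block we may rediagonalize $A$ without disturbing $B$. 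This is exactly the statement that $A$ and $B$ are commuting isoparametric-type tensors (modulo proving the eigenvalues are constant), and it is where the appendix material on commuting Codazzi tensors enters.

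For (i), the constancy of $\mathrm{tr}\,\mathbf{A}$, I would use the last identity in \eqref{equa6}: $\mathrm{tr}\,\mathbf{A}=\frac{1}{2n}(1+n^2\kappa)$, where $\kappa$ is the normalized scalar curvature of $g$. Contracting the curvature identity \eqref{equa4} (or equivalently tracing \eqref{equa5} once more) expresses $\kappa$ in terms of $\sum_{ij}B_{ij}^2$, $(\mathrm{tr}\,\mathbf{A})$, and $n$; since $\sum_{ij}B_{ij}^2=\frac{n-1}{n}$ is a fixed constant by \eqref{equa6}, this yields an algebraic relation that forces $\mathrm{tr}\,\mathbf{A}$, hence $\kappa$, to be constant. (Concretely: tracing \eqref{equa5} gives the scalar curvature as a universal affine function of $\mathrm{tr}\,\mathbf{A}$ with the $\sum B_{ij}^2$ term known, and \eqref{equa6} closes the loop.) Once $\mathrm{tr}\,\mathbf{A}$ is constant, each diagonal entry $a_i$ equals $\frac{1}{n-2}\bigl(R_{ii}+b_i^2-\mathrm{tr}\,\mathbf{A}\bigr)$ from \eqref{equa5}, and it remains to see that $R_{ii}$ is constant. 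For that I would invoke \eqref{cur1}: the sectional curvatures $R_{ijij}$ with $[i]\neq[j]$ are expressed through the Codazzi coefficients $B_{ij,k}/(b_i-b_k)$, and the Codazzi condition \eqref{equa3} for $B$ together with the constancy of the $b_i$'s (via \eqref{equa3} itself, since $C=0$ forces $B_{ij,k}=B_{ik,j}$) constrains these enough to show the curvature is pointwise determined by the combinatorial data $\{[i]\}$ and the constants $b_i$. Summing over $j$ gives $R_{ii}$ constant, and (i)+(ii) then deliver $a_i=\mathrm{const}$.

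The main obstacle I anticipate is the last point: showing that the Codazzi coefficients $B_{ij,k}$ entering \eqref{cur1}, although not individually constant, combine into a constant sectional curvature $R_{ijij}$. A priori $R_{ijij}$ could vary even though $B$ is isoparametric. The resolution should come from feeding the Codazzi identity \eqref{equa1} for $A$ (now $A_{ij,k}=A_{ik,j}$) back into \eqref{equa4}: differentiating \eqref{equa4} and using the already-established block structure of $A$ produces a first-order system for the $a_i$'s whose only consistent solution (given that $B$ has constant eigenvalues and at least three of them are distinct, so $r\geq 3$) is that the $a_i$ are constant — the hypothesis $r\geq 3$ being what rules out the degenerate branches. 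This is the step that genuinely uses "isoparametric tensor" technology rather than pure linear algebra, and it is where I would spend the bulk of the argument, most likely packaged as a lemma about a Codazzi tensor $A$ that commutes with a parallel-eigenvalue tensor $B$ and satisfies \eqref{equa4}.
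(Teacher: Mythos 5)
Your plan has a genuine gap at its two critical steps, and the one step you describe vaguely is in fact the entire content of the theorem.

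For step (i), the claimed proof of the constancy of $\mathrm{tr}\,\mathbf{A}$ is circular. Tracing \eqref{equa5} gives $n(n-1)\kappa=-\sum_{ij}B_{ij}^2+(2n-2)\,\mathrm{tr}\,\mathbf{A}$, which with $\sum_{ij}B_{ij}^2=\frac{n-1}{n}$ rearranges to exactly $\mathrm{tr}\,\mathbf{A}=\frac{1}{2n}(1+n^2\kappa)$; that is, you recover \eqref{equa6}, not a new constraint. The last identity of \eqref{equa6} is a consequence of \eqref{equa5}, so "closing the loop" produces a tautology and does not force $\mathrm{tr}\,\mathbf{A}$ or $\kappa$ to be constant. (Constancy of $\mathrm{tr}\,\mathbf{A}$ will of course follow once all $a_i$ are shown constant, but you cannot use it as an input.) Incidentally, your derivation of $(b_i-b_j)A_{ij}=0$ from \eqref{equa2} with $C=0$ is fine, but it needs no differentiation at all — the identity itself is already $0=\sum_k(B_{ik}A_{kj}-B_{jk}A_{ki})=(b_i-b_j)A_{ij}$ in a $B$-diagonal frame.

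For the remaining step — that $R_{ii}$ is constant — you correctly identify this as the crux and admit that $R_{ijij}$ is not obviously determined by the constants $b_i$ alone, but the "resolution" you sketch (differentiating \eqref{equa4} and hoping the first-order system collapses) is not an argument. In the paper's proof the decisive ingredient is rather the commuting-Codazzi relation
\[
A_{ij,k}=\frac{a_i-a_j}{b_i-b_j}\,B_{ij,k}\qquad ([i]\neq[j]),
\]
together with the full symmetry of both $A_{ij,k}$ and $B_{ij,k}$ in $(j,k)$. Symmetrizing across three distinct eigenspaces $[j],[k],[l]$ gives, whenever $B_{jk,l}\neq 0$,
\[
\frac{a_j-a_k}{b_j-b_k}=\frac{a_l-a_k}{b_l-b_k},
\]
which writes $a_j$ as an affine combination of $a_k$ and $a_l$ with constant coefficients; since $E_i(a_k)=E_i(a_l)=0$ for $i\in[j]$ was already obtained from $B_{ij,j}=0$, this yields $E_i(a_j)=0$. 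In the complementary case $B_{jk,l}\equiv 0$ for all $k,l$ (near a point), the formula \eqref{cur1} forces $R_{jkjk}=0$, and the Gauss equation \eqref{equa4} then gives the linear relation $a_j=-b_jb_k-a_k$, which again has vanishing $E_i$-derivative. The hypothesis $r\geq 3$ is what supplies a third eigenspace for the symmetrization. Your outline never produces either of these identities, so it does not reach a proof; the reduction through $\mathrm{tr}\,\mathbf{A}$ and $R_{ii}$ recasts the problem rather than solves it.
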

\begin{proof}
In the light of the classification result in \cite{lih2}, we may assume that the number $r$ of distinct principal curvatures is greater than $2$.
Since the Blaschke tensor is a Codazzi tensor, we have
\begin{equation*}
(a_i-a_j)\omega_{ij}=\sum_kA_{ij,k}\omega_k,
\end{equation*}
which implies, from (\ref{w2}),
\begin{equation}\label{aw1}
(a_i-a_j)\frac{B_{ij,k}}{b_i-b_j}=A_{ij,k} \text{ when }~~[i]\neq [j].
\end{equation}
Hence we know
\begin{equation}\label{aw2}
E_i(a_j)=A_{jj,i}=A_{ij,j}=0 \text{ when }~~ [i]\neq [j]
\end{equation}
from $B_{ij,j}=0$. Now to verify that $a_j$ is a constant, we only need to prove
\begin{equation}\label{abw}
E_i(a_j)=0,~~i\in [j].
\end{equation}
For a fixed point $p\in M^n$ and $j\in\{1, \cdots, n\}$, it is either $B_{jk,l} = 0$ for all $1\leq k,l\leq n$ or $B_{jk,l}\neq 0$ for some $1\leq k,l\leq n$.
First assume it is the second case. In fact we may assume $B_{jk,l}\neq 0$ in a neighborhood of $p$ for some $j,k,l$ that have to be associated to
three distinct M\"{o}bius principal curvatures. Therefore, from \eqref{aw1}, we obtain
\begin{equation*}
\frac{a_{j}-a_k}{b_{j}-b_k}=\frac{A_{jk,l}}{B_{jk,l}}=\frac{A_{lk,j}}{B_{lk,j}}=\frac{a_l-a_k}{b_l-b_k},
\end{equation*}
which implies
\begin{equation}\label{aw3}
a_{j}=(a_l-a_k)\frac{b_{j}-b_k}{b_l-b_k}+a_k.
\end{equation}
This easily implies (\ref{abw}). Next, suppose it is the first case. If there is a sequence of point $p_i\to p$ in $M^n$ such that the second cases happen on $p_i$ for
some $1\leq k,l\leq n$, then \eqref{abw} holds at $p$ due to the continuity. Otherwise, there is an open neighborhood $U\subset M^n$ of $p$ such that
$B_{jk,l} = 0$ for all $1\leq k,l\leq n$
in $U$. Therefore $R_{jkjk} = 0$ in $U$ from \eqref{cur1}. Hence, from \eqref{equa4}, we
derive
\begin{equation}\label{curv}
a_j = -b_jb_k - a_k \text{  in $U$ when $k\notin[j]$},
\end{equation}
which obviously implies \eqref{abw}. Thus the proof is complete.
\end{proof}

\begin{Remark}\label{codazzi-iso}
From the above proof it is clear that the following statement is true. Suppose that $A$ and $B$ are two commuting Codazzi tensors and that $B$
is an isoparametric tensor of $r(\geq 3)$ distinct eigenvalues. In addition assume $R_{ijij} = -a_i - a_j$ (cf. \eqref{2.8}).
Then $A$ is also an isoparametric tensor.
\end{Remark}

From now on in this section we will focus to studying Riemmannian manifolds with two commuting isoparametric tensors $T_1$ and $T_2$ that satisfy the condition
\begin{equation}\label{gauss1}
R_{ijkl}=\frac{1}{2}(T_1\bigodot T_1)_{ijkl}+(T_2\bigodot g)_{ijkl}
\end{equation}
according to \eqref{equa4}, where $\bigodot$ denotes the Kulkarni-Nomizu product (cf. Section \ref{subsec:iso-property-1}). We will complete the proof of Theorem \ref{redu3} in Section \ref{subsec:iso-case} and Section \ref{subsec:redu-case}. For basic properties
of isoparametric tensors readers
are referred to Section \ref{subsec:iso-property-1} and Section \ref{subsec:iso-property-2} in Appendix A.


\subsection{Linear relations of commuting isoparametric tensors}\label{subsec:iso-case}

\begin{Definition}\label{lin-iso} Let $(M^n, \ g)$ be a Riemannian manifold. Two symmetric 2-tensors $T_1$ and $T_2$ are said to be linearly dependent,
if there exist  constants $\lambda, \mu, \varepsilon $ such that $\lambda T_1+\mu T_2=\varepsilon g.$
\end{Definition}

Clearly two linearly dependent symmetric 2-tensors are always commuting. It turns out the converse is not true. In fact, two
commuting isoparametric tensors on a Riemannian manifold are not necessarily linearly dependent. In this subsection we want to give a sufficient
condition for two commuting isoparametric tensors that satisfy the \eqref{gauss1} to be linearly dependent. Our approach is to carefully study the
linear co-relations of all distinct pairs of eigenvalues of two commuting isoparametric tensors. First of all, given two commuting isoparametric tensors that satisfy \eqref{gauss1},
by Lemma \ref{ble2}, we may choose the orthonormal basis
$\{E_1,\cdots,E_n\}$ such that
\begin{equation*}
\begin{split}
\left(T_{ij}\right)=diag&(\underbrace{b_{\bar{1}},\cdots,b_{\bar{1}}},\underbrace{b_{\bar{2}},\cdots,b_{\bar{2}}},
\cdots,\underbrace{b_{\bar{r}},\cdots,b_{\bar{r}}}),\\
\left(\hat{T}_{ij}\right)=diag&(\underbrace{a_{\bar{1}},\cdots,a_{\bar{1}},\bar{a}_{\bar{1}},\cdots,\bar{a}_{\bar{1}}},\cdots,
\underbrace{a_{\bar{r}},\cdots,a_{\bar{r}},\bar{a}_{\bar{r}},\cdots,\bar{a}_{\bar{r}}}),
\end{split}
\end{equation*}
where $a_{\bar{i}}$ and $\bar{a}_{\bar{i}}$ may be same and $b_{\bar{1}}<\cdots<b_{\bar{r}}$. We then define the following two index sets
$$
[i]=\{k\in\{1, 2, \cdots, n\}|\ b_k=b_i\} \text{ and } (i)=\{k\in [i]|\ a_k=a_i\}.$$
Let $s$ be the number of the distinct groups of indices in the collection $\{(1),(2),\cdots,(n)\}$ and label these distinct
groups of indices as $\{(\bar{1}),(\bar{2}),\cdots,(\bar{s})\}$. Clearly, we have  $(i)\subseteq [i] \text{ and } s\geq r.$
For any $i\in\{1, 2, \cdots, n\}$, we consider the pair $(a_i,b_i)$ and observe that
$$
(a_i,b_i)=(a_j,b_j) \text{ if and only if } (i) = (j).
$$
Hence one may write $(a_i,b_i)= (a_{(i)},b_{(i)})$ and there are exactly $s$ distinct pairs.  Let $W$ denote the set of all of the pairs, that is,
$$W=\{(a_{(\bar{1})},b_{(\bar{1})}),(a_{(\bar{2})},b_{(\bar{2})}),\cdots,(a_{(\bar{s})},b_{(\bar{s})})\}.$$
For a number $\varepsilon$ (including $\infty$) and a group $(i)$ fixed, we  define the set of pairs
$$S_{(i)}(\varepsilon):=\{(a_k,b_k)\in W| \ \frac{a_i-a_k}{b_i-b_k}=\varepsilon,~ k\notin (i)\} \bigcup\{(a_{(i)}, b_{(i)})\}.$$
From Lemma \ref{ble2} and the above definition of $S_{(i)}(\varepsilon)$, it is easy to verify the following properties:

\begin{lemma}\label{le2} Suppose that $T_1$ and $T_2$ are two commuting isoparametric tensors on a Riemannian manifold
$(M^n, \ g)$ and satisfy the relation \eqref{gauss1}.
For a fixed index set $(i)$, the following hold: \\
(1) ${S}_{(i)}(\infty)$ can have at most two pairs;\\
(2) For two non-empty sets ${S}_{(i)}(\varepsilon_k),{S}_{(i)}(\varepsilon_l)$ and $\varepsilon_k\neq\varepsilon_l$,  ${S}_{(i)}(\varepsilon_k)\cap{S}_{(i)}(\varepsilon_l)=\{(a_{(i)}, b_{(i)})\}$;\\
(3) There exist only finitely many constants (including $\infty$) $\varepsilon_1,\cdots,\varepsilon_l$ such that
${S}_{(i)}(\varepsilon_1), \cdots, {S}_{(i)}(\varepsilon_l) $ are non-empty;\\
(4) If  the set ${S}_{(i)}(\varepsilon)= \{(a_{(i)},b_{(i)}), (a_{(j)},b_{(j)})\}$ for $j\notin(i)$, then
\begin{equation}\label{abco1}
R_{klkl}=b_ib_j+a_i+a_j=0 \text{ for all $k\in(i)$ and $l\in(j)$.}
\end{equation}
\end{lemma}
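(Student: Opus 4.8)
The plan is to read the whole lemma off a single planar picture: regard the $s$ distinct pairs $W=\{(a_{(\bar 1)},b_{(\bar 1)}),\cdots,(a_{(\bar s)},b_{(\bar s)})\}$ as points in $\mathbb{R}^2$, and note that $S_{(i)}(\varepsilon)$ is precisely the set of points of $W$ that lie on the straight line through $(a_i,b_i)$ of slope $\varepsilon$ (the vertical line when $\varepsilon=\infty$). With this dictionary in hand, (2) becomes the statement that two distinct lines through the common point $(a_i,b_i)$ meet only at that point, and (3) becomes the remark that, $W$ being finite, only finitely many lines through $(a_i,b_i)$ can carry a second point of $W$; both are then immediate. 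Part (1) is the planar translation of the normal form supplied by Lemma \ref{ble2}: on a single $T_1$-eigenspace $V_{b_i}$ the tensor $T_2$ has at most two distinct eigenvalues, so at most two points of $W$ share the $b$-coordinate $b_i$, which says exactly that $S_{(i)}(\infty)$ has at most two elements.

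The real content is (4), and half of it is free: from \eqref{gauss1}, computing the Kulkarni-Nomizu products for the simultaneously diagonalized tensors in an orthonormal frame gives $R_{pqpq}=b_pb_q+a_p+a_q$ for every $p\neq q$, and for $k\in(i)$, $l\in(j)$ (with $k\neq l$, since the groups are disjoint) this is $b_ib_j+a_i+a_j$; everything therefore reduces to proving that this number vanishes. My engine for that would be a collinearity lemma for the structure coefficients. Since $T_1$ and $T_2$ have constant eigenvalues and are diagonalized in the same frame, $T_{1,kl,m}=(b_k-b_l)\,\omega_{kl}(E_m)$ and $T_{2,kl,m}=(a_k-a_l)\,\omega_{kl}(E_m)$ with one and the same connection form $\omega_{kl}$; substituting this into the Codazzi symmetries $T_{1,kl,m}=T_{1,km,l}$ and $T_{2,kl,m}=T_{2,km,l}$ yields the following: if $E_k,E_l,E_m$ lie in three distinct $T_1$-eigenspaces and $T_{1,kl,m}\neq 0$, then $\frac{a_k-a_l}{b_k-b_l}=\frac{a_k-a_m}{b_k-b_m}$, i.e. the three pairs $(a_k,b_k)$, $(a_l,b_l)$, $(a_m,b_m)$ are collinear. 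Exchanging the roles of $T_1$ and $T_2$ gives the same conclusion from $T_{2,kl,m}\neq 0$ when the three vectors lie in three distinct $T_2$-eigenspaces.

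For (4) itself I would fix $k\in(i)$ and $l\in(j)$ under the hypothesis $S_{(i)}(\varepsilon)=\{(a_i,b_i),(a_j,b_j)\}$, $j\notin(i)$, and split into two cases. If $\varepsilon\neq\infty$ then $b_i\neq b_j$, so $[k]\neq[l]$ and \eqref{cur1} applies: $R_{klkl}=\sum_{m\notin[k],[l]}\frac{2\,T_{1,kl,m}^{2}}{(b_k-b_m)(b_l-b_m)}$. For any $m$ contributing a nonzero term, the collinearity lemma forces $(a_m,b_m)$ onto the line carrying $S_{(i)}(\varepsilon)$, hence $(a_m,b_m)\in\{(a_i,b_i),(a_j,b_j)\}$; but $m\notin[k],[l]$ means $b_m\notin\{b_i,b_j\}$, a contradiction, so every term is zero and $R_{klkl}=0=b_ib_j+a_i+a_j$. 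If $\varepsilon=\infty$ then $b_i=b_j$ (so $[k]=[l]$) but $a_i\neq a_j$, so $E_k$ and $E_l$ lie in distinct $T_2$-eigenspaces; now I would run the identical elimination with the $T_2$-version of \eqref{cur1} and the $T_2$-collinearity lemma, using that the only points of $W$ with $b$-coordinate $b_i$ are $(a_i,b_i)$ and $(a_j,b_j)$, and conclude again that $R_{klkl}=0=b_i^{2}+a_i+a_j$.

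The step I expect to need genuine care is the collinearity lemma — in particular tracking exactly which index triples $(k,l,m)$ are admissible (three mutually distinct eigenspaces of the relevant tensor) so that all the denominators occurring in the manipulation are legitimate, and recognizing that the $T_2$-sided version of it is precisely what rescues the degenerate range $\varepsilon=\infty$ where the $T_1$-eigenspaces of $E_k$ and $E_l$ collapse. Everything else — the planar picture behind (1)--(3) and the reduction of (4) to a vanishing statement via \eqref{gauss1} and \eqref{cur1} — should be routine once that lemma is in place.
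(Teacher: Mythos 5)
Your proposal is correct and follows essentially the same argument as the paper: parts (1)--(3) are read off directly from the planar picture together with Lemma \ref{ble2}, and for (4) you deploy the same collinearity consequence of the two Codazzi symmetries combined with the third equation in \eqref{cotensor} to force every $T_{kl,m}$ to vanish, then read off $R_{klkl}=0$ from \eqref{cur1} and $R_{klkl}=b_ib_j+a_i+a_j$ from \eqref{gauss1}. The one place you go beyond what is written in the paper is the $\varepsilon=\infty$ branch of (4): the paper's proof considers only the cases $k\in(i)$, $l\notin[i]$, hence tacitly assumes $\varepsilon\neq\infty$ and leans on Lemma \ref{ble2} for the vertical case without saying so, whereas your explicit treatment via \eqref{extra-vanish} and the two-eigenvalue bound on each $T_1$-eigenspace makes that step visible and closes the small gap.
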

\begin{proof} These properties are all trivial except (4). It suffices to show that
$T_{kl,m}=0$ for all $m = 1, 2, \cdots, n$ when $k\in(i)$ and $l\in(j)$. The nontrivial cases are $k\in(i)\subset[i]$, $l\notin [i]$ and $m\notin [i]\cup[l]$.
Hence, from  the third equation in (\ref{cotensor}), we would have
$$
\frac{a_{m}-a_k}{b_{m}-b_k}=\frac{\hat{T}_{km,l}}{T_{km,l}}=\frac{\hat{T}_{lk,m}}{T_{lk,m}}=\frac{a_{l}-a_k}{b_{l}-b_k}
$$
if $T_{kl, m} = T_{km,l}$ were not vanishing.  That would imply $(a_m, b_m)\in S_{(i)}(\varepsilon)$ and a contradiction to assumption that $S_{(i)}(\varepsilon)$ has
only two pairs.  Thus the proof is complete.
\end{proof}

Next we want to understand the geometric impacts for the set $S_{(i)}(\varepsilon)$ to contain more than two pairs. Again the key is to establish the generalized
Cartan identity that relates the sectional curvatures in the planes generated by the eigenvectors whose eigenvalues lie in the set $S_{(i)}(\varepsilon)$.

\begin{lemma}\label{le31} Suppose that $T_1$ and $T_2$ are two commuting isoparametric tensors on a Riemannian manifold
$(M^n, g)$ and satisfy the relation \eqref{gauss1}.
And suppose that, for some $i$ and $\varepsilon$,
$${S}_{(i)}(\varepsilon)=\{(a_{i_1},b_{i_1}),(a_{i_2},b_{i_2}),\cdots,(a_{i_t},b_{i_t})\}$$
has $t$ number of distinct pairs for $t\geq 3$.
Then, for $j\in (i_k)$, $h\in(i_l)$, $i_k\neq i_l$,
\begin{equation}\label{curpair}
R_{jhjh}=\sum_{m\in (i_1)\cup(i_2)\cup\cdots\cup(i_t),
 m\notin (i_k)\cup(i_l)}\frac{T^2_{jh,m}}{(b_{i_k}-b_m)(b_{i_l}-b_m)}.
 \end{equation}
 Consequently, if $b_{i_1}<b_{i_2}<\cdots<b_{i_t}$, then
\begin{equation}\label{abco2}
\begin{split}
&R_{ijij} = b_{i_m}b_{i_{m+1}} + a_{i_m} + a_{i_{m+1}} \geq 0,~~~i\in(i_m),~j\in(i_{m+1}),~~~m=1,2,\cdots,t-1,\\
&R_{ijij} = b_{i_1}b_{i_t} + a_{i_1} + a_{i_t} \leq0,~~~i\in(i_1),~j\in(i_t).
\end{split}
\end{equation}
More importantly, for each $i_k$ fixed and $j\in (i_k)$, we have the following generalized Cartan identity,
\begin{equation}\label{line-cartan}
\sum_{m\in (i_1)\cup(i_2)\cup\cdots\cup(i_t), m\notin (i_k)}\frac{R_{jmjm}}{b_{i_k}-b_m}=0.
\end{equation}
\end{lemma}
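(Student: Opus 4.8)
The plan is to obtain all three displayed identities from two structural properties of isoparametric tensors, combined with the Kulkarni--Nomizu relation \eqref{gauss1}. In the common eigenbasis furnished by Lemma \ref{ble2}, \eqref{gauss1} gives $R_{ijij}=b_ib_j+a_i+a_j$ whenever $[i]\neq[j]$; and the curvature formula for the isoparametric tensor $T_1$ (the analogue of \eqref{cur1}, see Section \ref{subsec:iso-property-2}) expresses $R_{jhjh}$ for $[j]\neq[h]$ as a sum, with fixed positive coefficients, of terms $T_{jh,m}^2/\bigl((b_j-b_m)(b_h-b_m)\bigr)$ over indices $m\notin[j]\cup[h]$, where $T_{jh,m}$ are the components of $\nabla T_1$.

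I would first prove \eqref{curpair}. Since $t\geq 3$, Lemma \ref{le2}(1) rules out $\varepsilon=\infty$, so by the definition of $S_{(i)}(\varepsilon)$ all of the pairs $(a_{i_1},b_{i_1}),\dots,(a_{i_t},b_{i_t})$ lie on one line $a=\varepsilon\,b+c$ in the $(b,a)$-plane; in particular they have pairwise distinct $b$-coordinates, and any index $m\in(i_1)\cup\cdots\cup(i_t)$ with $b_m=b_{i_l}$ must satisfy $a_m=a_{i_l}$, i.e. $m\in(i_l)$. Now fix $j\in(i_k)$, $h\in(i_l)$ with $i_k\neq i_l$ and apply the curvature formula for $T_1$. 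The key claim is that $T_{jh,m}=0$ unless $(a_m,b_m)\in S_{(i)}(\varepsilon)$: indeed, if $T_{jh,m}\neq0$ then $m\notin[j]\cup[h]$ by the isoparametric property of $T_1$ (cf. \eqref{w2}), so the third equation in \eqref{cotensor} applies and gives $\frac{a_m-a_j}{b_m-b_j}=\frac{a_h-a_j}{b_h-b_j}$; the right-hand side equals $\varepsilon$ because $(a_j,b_j)$ and $(a_h,b_h)$ are two of the collinear pairs of $S_{(i)}(\varepsilon)$, hence $(a_m,b_m)$ lies on the same line and so $(a_m,b_m)\in S_{(i)}(\varepsilon)$, i.e. $m\in(i_1)\cup\cdots\cup(i_t)$. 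Restricting the sum to such $m$ and observing that $\bigl([j]\cup[h]\bigr)\cap\bigl((i_1)\cup\cdots\cup(i_t)\bigr)=(i_k)\cup(i_l)$ (using the distinctness of the $b$-coordinates) yields \eqref{curpair}.

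The remaining two identities then follow quickly. For \eqref{abco2}, combine \eqref{curpair} with $R_{ijij}=b_ib_j+a_i+a_j$: when $b_{i_1}<\cdots<b_{i_t}$ and $i\in(i_m)$, $j\in(i_{m+1})$ are consecutive, every surviving index $p$ in \eqref{curpair} has $b_p$ either below both $b_{i_m},b_{i_{m+1}}$ or above both, so $(b_{i_m}-b_p)(b_{i_{m+1}}-b_p)>0$ and $R_{ijij}\geq 0$; for the extreme pair $i\in(i_1)$, $j\in(i_t)$ every such $b_p$ lies strictly between $b_{i_1}$ and $b_{i_t}$, so $(b_{i_1}-b_p)(b_{i_t}-b_p)<0$ and $R_{ijij}\leq 0$. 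For the generalized Cartan identity \eqref{line-cartan}, fix $i_k$ and $j\in(i_k)$ and substitute \eqref{curpair} into the left-hand side; after reorganizing it becomes a double sum
\[
\sum_{m,p}\frac{T_{jm,p}^2}{(b_{i_k}-b_m)(b_{i_k}-b_p)(b_m-b_p)}
\]
(up to the positive normalizing constant), where $m,p$ range independently over $\bigl((i_1)\cup\cdots\cup(i_t)\bigr)\setminus(i_k)$ subject to $b_m\neq b_p$. Since $T_{jm,p}=T_{jp,m}$ by the Codazzi symmetry, the summand changes sign under the transposition $m\leftrightarrow p$ while the index set is preserved, so the sum equals its own negative and therefore vanishes.

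The step I expect to be the main obstacle is the proof of \eqref{curpair}, and within it the vanishing $T_{jh,m}=0$ for $(a_m,b_m)\notin S_{(i)}(\varepsilon)$: this is the one place where the interplay of the two commuting tensors is used in an essential way, through the Codazzi relation \eqref{cotensor} and the collinearity built into the definition of $S_{(i)}(\varepsilon)$. Once \eqref{curpair} is established, \eqref{abco2} is a sign count and \eqref{line-cartan} is the classical antisymmetrization argument underlying Cartan's identity.
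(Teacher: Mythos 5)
Your proof is correct and follows the same route as the paper: the entire lemma reduces to showing $T_{jh,m}=0$ whenever $j,h$ lie in the groups of $S_{(i)}(\varepsilon)$ and $m$ does not, which you establish via the Codazzi-symmetry identity $\hat T_{jh,m}/T_{jh,m}=\hat T_{jm,h}/T_{jm,h}$ from \eqref{cotensor} exactly as the paper does by invoking the proof of Lemma \ref{le2}(4). The paper's proof is terser (it cites the argument for \eqref{cur1}, \eqref{tensor3}, and \eqref{tensor4} implicitly), whereas you spell out the sign count for \eqref{abco2} and the antisymmetrization giving \eqref{line-cartan}, but the substance is the same; the only minor discrepancy — the factor of $2$ present in \eqref{tensor2}/\eqref{cur1} but absent from \eqref{curpair} as printed — is a typo in the paper and, as you note, does not affect the sign conditions or the Cartan identity.
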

\begin{proof} It suffices to prove that
$$
T_{jh,m}=0 \text{ when } j,h\in (i_1),(i_2),\cdots,(i_t), \text{ and } m\notin (i_1),(i_2),\cdots,(i_t).
$$
The argument is exactly the same as the proof of (4) in the above lemma. Because $T_{jh,m}\neq 0$ would imply that
$(a_m, b_m)\in S_{(i)}(\varepsilon)$, which is a contradiction.
\end{proof}

Consequently we observe

\begin{lemma}\label{le5} Under the same assumptions as in Lemma \ref{le31},
$$b_{i_1}+\varepsilon< 0,~~~b_{i_t}+\varepsilon> 0.$$
\end{lemma}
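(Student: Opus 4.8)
The plan is to reduce the statement to an elementary inequality about real numbers by using that the pairs collected in $S_{(i)}(\varepsilon)$ are collinear. Since $t\geq 3$, part (1) of Lemma \ref{le2} forces $\varepsilon\neq\infty$, so $\varepsilon$ is finite; and by the definition of $S_{(i)}(\varepsilon)$ every pair $(a_{i_k},b_{i_k})$ lies on the line $a=\varepsilon b+c$ with $c:=a_{(i)}-\varepsilon b_{(i)}$, i.e.\ $a_{i_k}=\varepsilon b_{i_k}+c$ for $k=1,\dots,t$. As the $t$ pairs are distinct and the slope is finite, the numbers $b_{i_1}<b_{i_2}<\cdots<b_{i_t}$ are distinct, so I set $\beta_k:=b_{i_k}+\varepsilon$ (hence $\beta_1<\cdots<\beta_t$) and $\gamma:=2c-\varepsilon^2$. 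A one-line computation then gives $R_{jhjh}=b_{i_k}b_{i_l}+a_{i_k}+a_{i_l}=\beta_k\beta_l+\gamma$ whenever $j\in(i_k)$, $h\in(i_l)$ with $k\neq l$. In these variables \eqref{abco2} reads
\[
\beta_m\beta_{m+1}+\gamma\geq 0\quad(1\leq m\leq t-1),\qquad \beta_1\beta_t+\gamma\leq 0,
\]
while the generalized Cartan identity \eqref{line-cartan}, after collecting the summation index by the group it lies in (write $n_l:=|(i_l)|\geq 1$ and use $b_{i_k}-b_m=\beta_k-\beta_l$ for $m\in(i_l)$), becomes, for each fixed $k$,
\[
\sum_{l\neq k}n_l\,\frac{\beta_k\beta_l+\gamma}{\beta_k-\beta_l}=0.
\]

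To prove $b_{i_1}+\varepsilon<0$, that is $\beta_1<0$, I would argue by contradiction, ruling out $\beta_1>0$ and $\beta_1=0$ separately. If $\beta_1>0$, then since $\beta_2<\beta_t$ the affine function $x\mapsto\beta_1 x+\gamma$ is strictly increasing, which produces the impossible chain $0\leq\beta_1\beta_2+\gamma<\beta_1\beta_t+\gamma\leq 0$. If $\beta_1=0$, then $\beta_1\beta_2+\gamma=\gamma\geq 0$ and $\beta_1\beta_t+\gamma=\gamma\leq 0$ force $\gamma=0$, so $R_{jhjh}=\beta_1\beta_l=0$ for $j\in(i_1)$; feeding this into the Cartan identity with $k=2$, the $l=1$ summand vanishes since it carries the factor $\beta_1=0$, while for every $l\geq 3$ one has $\beta_2>0$, $\beta_l>\beta_2>0$ and $\beta_2-\beta_l<0$, so $n_l(\beta_2\beta_l+\gamma)/(\beta_2-\beta_l)=n_l\beta_2\beta_l/(\beta_2-\beta_l)<0$; since $t\geq 3$ there is at least one such $l$, so the sum is strictly negative, contradicting the identity. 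Hence $\beta_1<0$. The inequality $b_{i_t}+\varepsilon>0$ is obtained by the same argument run from the top: one rules out $\beta_t<0$ using that $x\mapsto\beta_t x+\gamma$ is then strictly decreasing together with the consecutive pair $\beta_{t-1}\beta_t+\gamma\geq 0$ and the outer pair $\beta_1\beta_t+\gamma\leq 0$, and one rules out $\beta_t=0$ by again deducing $\gamma=0$ and invoking the Cartan identity at $k=t-1$.

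The computations involved are short, so the only point requiring care is the logical layout: the sign inequalities \eqref{abco2} by themselves merely isolate the degenerate configuration $\beta_1=0$, $\gamma=0$ and cannot exclude it; one must feed that configuration back into the generalized Cartan identity \eqref{line-cartan}, and the crucial fact there is that after the vanishing term is removed a nonempty collection of strictly one-signed summands remains — which is precisely where $t\geq 3$ enters. This coupling of the curvature-sign data with the Cartan identity is the main (and essentially the only) obstacle; the substitution $\beta_k=b_{i_k}+\varepsilon$ turns everything else into bookkeeping.
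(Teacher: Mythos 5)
Your proposal is correct, and it follows essentially the same route as the paper: first use the endpoint sign constraints in \eqref{abco2} to force $b_{i_1}+\varepsilon\leq 0$ (respectively $b_{i_t}+\varepsilon\geq 0$), then rule out the boundary case by feeding the resulting degeneracy into the generalized Cartan identity \eqref{line-cartan} and exploiting $t\geq 3$ to produce a nonempty collection of one-signed, nonzero summands. The only cosmetic difference is the normalization $\beta_k=b_{i_k}+\varepsilon$, $\gamma=2d-\varepsilon^2$ and the choice of base index for the Cartan identity (you use $i_2$ and $i_{t-1}$, the paper uses $i_t$), but the mechanism is identical.
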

\begin{proof}
From Lemma \ref{le31}, we have
\begin{equation}\label{start-end}
R_{i_1i_2i_1i_2}\geq 0,~~R_{i_1i_ti_1i_t}\leq 0.
\end{equation}
In the light of the assumption (\ref{gauss1}), we arrive at
\begin{equation*}
\begin{split}
0\leq R_{i_1i_2i_1i_2}-R_{i_1i_ti_1i_t}=b_{i_1}(b_{i_2}-b_{i_t})+a_{i_2}-a_{i_t}\\
=(b_{i_2}-b_{i_t})(b_{i_1}+\frac{a_{i_2}-a_{i_t}}{b_{i_2}-b_{i_t}})=(b_{i_2}-b_{i_t})(b_{i_1}+\varepsilon),
\end{split}
\end{equation*}
which implies $b_{i_1}+\varepsilon\leq 0.$ To prove that $b_{i_1}+\varepsilon< 0$ we do it by contradiction. We assume otherwise
$b_{i_1}+\varepsilon= 0$ and hence $b_{i_t} + \varepsilon >b_{i_1}+\varepsilon=0$. Immediately we have
$$
R_{i_1ki_1k}-R_{i_1li_1l}=b_{i_1}(b_k-b_l)+a_k-a_l=(b_k-b_l)(b_{i_1}+\varepsilon)=0 \text{ for any } k\in (i_k) \text{ and } l\in(i_l).
$$
That is
$$
R_{i_1ki_1k}=R_{i_1li_1l} \text{ for any } k\in (i_k) \text{ and } l\in(i_l),
$$
which forces, from \eqref{start-end}, $R_{i_1ki_1k}=0$ for any $k\in (i_2)\cup(i_3)\cdots\cup(i_t)$. On the other hand,
$$R_{i_tki_tk}-R_{i_tli_tl}=b_{i_t}(b_k-b_l)+a_k-a_l=(b_k-b_l)(b_{i_t}+\varepsilon)~~ \text{ for any } k\in (i_k) \text{ and } l\in(i_l),$$
which implies
$$
0=R_{i_ti_1i_ti_1} < R_{i_ti_{i_2}i_ti_{i_2}} < \cdots < R_{i_ti_{t-1}i_ti_{t-1}}
$$
and hence
$$
R_{i_tmi_tm}> 0 \text{ for any } m\in (i_2)\cup(i_3)\cdots\cup(i_{t-1}).
$$
Therefore,  in the light of the generalized Cartan identity \eqref{line-cartan} for $i=i_t$,
\begin{equation*}
\sum_{m\in (i_2)\cup(i_3)\cup\cdots\cup(i_{t-1})}\frac{R_{i_tmi_tm}}{b_{i_t}-b_m}=0
\end{equation*}
and therefore $R_{i_tmi_tm} = 0$ for $m\in(i_1)\cup(i_2)\cup\cdots\cup(i_{t-1})$ and $b_{i_t}+\varepsilon=0$, which is a contradiction when
$t\geq 3$.

To prove $b_{i_t} + \varepsilon > 0$, similarly from Lemma \ref{le31}, we start with
$$R_{i_ri_{t-1}i_ti_{t-1}}\geq 0 \text{ and }R_{i_1i_ti_1i_t}\leq 0$$
and
$$
0 \leq R_{i_ti_{t-1}i_ti_{t-1}}-R_{i_1i_ti_1i_t}=(b_{i_{t-1}}-b_{i_1})(b_{i_t}+\varepsilon),
$$
to conclude $b_{i_t}+\varepsilon\geq 0$. Then, with the argument similar as that in the above, we can derive
a contradiction if $b_{i_t}+\varepsilon = 0$. Thus the proof is complete.
\end{proof}

The following is another technical lemma we will use to discover the structure of the distribution of pairs $(a_i, b_i)$
in the plane and facilitate the proof of Theorem \ref{th1}. It is clear that pairs in each set $S_{(i)}(\varepsilon)$ lie in
a line with the slope $\varepsilon$ if $S_{(i)}(\varepsilon)$ has more than one pairs. We observe

\begin{lemma}\label{le4} Under the same assumptions as that in Lemma \ref{le31}, if the set $S_{(i)}(\varepsilon)$ has at least three distinct pairs and
the line equation for the set $S_{(i)}(\varepsilon)$ is
$$
a=\varepsilon b+d
$$
for some constant $d$, then
\begin{equation}\label{slope-intercept}
\varepsilon^2 - 2d < 0.
\end{equation}
\end{lemma}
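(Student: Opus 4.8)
The plan is to leverage the generalized Cartan identity \eqref{line-cartan} from Lemma \ref{le31} together with the sectional-curvature formulas \eqref{abco2}, applied to the two extreme pairs of the set $S_{(i)}(\varepsilon)$. Write the pairs of $S_{(i)}(\varepsilon)$ as $(a_{i_1}, b_{i_1}),\dots,(a_{i_t},b_{i_t})$ with $b_{i_1}<\dots<b_{i_t}$ and $t\geq 3$, all satisfying $a_{i_k}=\varepsilon b_{i_k}+d$. From \eqref{abco2} the sectional curvatures between consecutive eigenspaces are nonnegative and the one between the first and last is nonpositive; combining these with the line equation gives, for $i\in(i_1)$, $j\in(i_k)$ with $k\geq 2$, the value $R_{ijij}=b_{i_1}b_{i_k}+a_{i_1}+a_{i_k}=b_{i_1}b_{i_k}+\varepsilon(b_{i_1}+b_{i_k})+2d$. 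The key observation is that this is a quadratic-type expression whose sign is governed by the factorization $b_{i_1}b_{i_k}+\varepsilon(b_{i_1}+b_{i_k})+2d=(b_{i_1}+\varepsilon)(b_{i_k}+\varepsilon)+(2d-\varepsilon^2)$.

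Next I would invoke Lemma \ref{le5}, which gives $b_{i_1}+\varepsilon<0$ and $b_{i_t}+\varepsilon>0$. Since $b_{i_1}<b_{i_2}<\dots<b_{i_t}$, there is a well-defined index $p$ with $b_{i_p}+\varepsilon<0\leq b_{i_{p+1}}+\varepsilon$ (or equality never occurs, in which case we have a clean sign change); in particular $1\leq p\leq t-1$. The generalized Cartan identity \eqref{line-cartan} centered at $i=i_1$ reads $\sum_{m}\frac{R_{i_1 m i_1 m}}{b_{i_1}-b_m}=0$, where the sum runs over eigenvectors with eigenvalue pair in $(i_2)\cup\dots\cup(i_t)$. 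Each denominator $b_{i_1}-b_m$ is negative. Using the factorization above, $R_{i_1 m i_1 m}=(b_{i_1}+\varepsilon)(b_m+\varepsilon)+(2d-\varepsilon^2)$: for $m$ in the blocks $(i_2),\dots,(i_p)$ both factors $(b_{i_1}+\varepsilon)$ and $(b_m+\varepsilon)$ are negative so the product is positive, while for $m$ in $(i_{p+1}),\dots,(i_t)$ the product is negative. I then want to extract from the vanishing of the Cartan sum, combined with the sign information from \eqref{abco2} (consecutive curvatures $\geq 0$, extreme one $\leq 0$), that $2d-\varepsilon^2$ cannot be $\geq 0$; otherwise every term $R_{i_1 m i_1 m}$ with $b_m+\varepsilon\leq 0$ would be strictly positive and, divided by a negative denominator, contribute strictly negatively, while the remaining terms would have to compensate — but a symmetric application of \eqref{line-cartan} centered at $i=i_t$ produces the opposite constraint, and the two together force $2d-\varepsilon^2<0$.

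The main obstacle will be organizing the sign bookkeeping so that the two Cartan identities (at $i_1$ and at $i_t$) genuinely contradict $2d-\varepsilon^2\geq 0$ rather than merely being consistent with it; one has to be careful that some of the intermediate $R_{i_1 m i_1 m}$ could vanish, and that the blocks $(i_k)$ may be empty for intermediate $k$ after the grouping. The cleanest route is probably: assume $\varepsilon^2-2d\geq 0$ for contradiction, apply \eqref{line-cartan} at $i_1$ to see all terms with negative $(b_m+\varepsilon)$ are $\leq 0$ after division (hence the whole sum is $\leq 0$ with at least one strict inequality unless everything vanishes), derive that forces the ``positive side'' terms to be strictly positive, then apply \eqref{line-cartan} at $i_t$ to get the reverse, obtaining that all the relevant curvatures vanish; but then $R_{i_1 i_t i_1 i_t}=0$ together with $R_{i_1 i_2 i_1 i_2}=0$ (from the collapse) and $b_{i_1}<b_{i_2}$ contradicts the strict chain of inequalities one expects, exactly as in the endgame of Lemma \ref{le5}. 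I expect the argument to mirror that of Lemma \ref{le5} closely, with the line equation $a=\varepsilon b+d$ doing the bookkeeping that the ad hoc differences did there.
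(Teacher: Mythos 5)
Your setup is correct and matches the paper's: you extract the factorization
\[
R_{ijij}=(b_{i_k}+\varepsilon)(b_{i_l}+\varepsilon)+(2d-\varepsilon^2)
\]
for $i\in(i_k)$, $j\in(i_l)$, and you plan to invoke Lemma~\ref{le5} for $b_{i_1}+\varepsilon<0$ and $b_{i_t}+\varepsilon>0$. Those are exactly the two ingredients the paper uses. The gap is in how you try to close the argument in the strict case $\varepsilon^2-2d>0$.

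You claim that, under the contradiction hypothesis $\varepsilon^2-2d\geq 0$, ``every term $R_{i_1 m i_1 m}$ with $b_m+\varepsilon\leq 0$ would be strictly positive.'' This does not follow. Writing $\tilde b_l=b_{i_l}+\varepsilon$, you have $R_{i_1 m i_1 m}=\tilde b_1\tilde b_m-(\varepsilon^2-2d)$; when $\tilde b_m\leq 0$ the product $\tilde b_1\tilde b_m$ is $\geq 0$, but you are then subtracting the nonnegative quantity $\varepsilon^2-2d$, so the sign of $R_{i_1 m i_1 m}$ is indeterminate. The sign bookkeeping you envision for the two Cartan sums at $i_1$ and $i_t$ therefore does not get off the ground, and the ``all terms vanish'' endgame you hope to reach is not supported. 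Moreover, the sign pattern of the Cartan sum at $i_1$ or $i_t$ is in general mixed (positive and negative contributions), so neither identity alone (nor the pair of them) is automatically inconsistent with $\varepsilon^2-2d\geq 0$ without more input.

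What you are missing is the much simpler and decisive step the paper uses for the strict case: apply the \emph{consecutive-pair} curvature bound $R_{i_m i_{m+1}i_m i_{m+1}}\geq 0$ from \eqref{abco2} to the factorization. If $\varepsilon^2-2d>0$, this forces $\tilde b_{i_m}\tilde b_{i_{m+1}}\geq \varepsilon^2-2d>0$ for every consecutive pair, so all the $\tilde b_{i_m}$ share the same sign; that flatly contradicts $\tilde b_{i_1}<0<\tilde b_{i_t}$ from Lemma~\ref{le5}. No Cartan identity is needed at this stage. The Cartan identity \eqref{line-cartan} is only needed to rule out the borderline case $\varepsilon^2-2d=0$ (where the consecutive products are merely $\geq 0$, allowing exactly one of the $\tilde b_{i_m}$ to vanish), and there it is used at a single well-chosen block, not as a two-sided balancing act. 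Your intuition that this last step ``mirrors the endgame of Lemma~\ref{le5}'' is correct, but the main estimate $\varepsilon^2-2d\leq 0$ should come from the consecutive-pair bound, not from the Cartan sums.
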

\begin{proof} As in the above, let $S_{(i)}(\varepsilon)=\{(a_{i_1},b_{i_1}),(a_{i_2}, b_{i_2}),\cdots,(a_{i_t},b_{i_t})\}$ and $b_{i_1}<b_{i_2} < \cdots<b_{i_t}$.
By the assumption, each pair $(a_{i_k}, b_{i_k})$ in $S_{(i)}(\varepsilon)$ satisfies the line equation
$$
a_{i_k} = \varepsilon b_{i_k} + d.
$$
Hence, for $i\in (i_k)$ and $j\in(i_l)$, from the assumption \eqref{gauss1}, we obtain
\begin{equation}\label{cur-line}
R_{ijij}=b_ib_j+a_i+a_j=(b_{i_k}+\varepsilon)(b_{i_l}+\varepsilon)+2d-\varepsilon^2.
\end{equation}
For the simplicity, we will use the notations $\tilde b_i = b_i +\varepsilon$ in the following.

We first claim that $2d - \varepsilon \geq 0$. Assume otherwise that $2d - \varepsilon < 0$.
From Lemma \ref{le31}, we know that
$$
0 \leq R_{i_mi_{m+1}i_mi_{m+1}} = \tilde b_{i_m}\tilde b_{i_{m+1}} + 2d - \varepsilon^2,~~m=1,2,cdots, t-1,
$$
which implies
\begin{equation}\label{transitive}
\tilde b_{i_m}\tilde b_{i_{m+1}} > 0,
\end{equation}
for all $m\in\{1, 2, \cdots, t-1\}$.
Under the assumptions, we know from Lemma \ref{le5} that $\tilde b_{i_1}
<0$. Therefore we may conclude that $\tilde b_{i_t} < 0$ in the light of \eqref{transitive}, which is
a contradiction to $\tilde b_{i_t} > 0$ in Lemma \ref{le5}. So we have $2d- \varepsilon \geq 0$.

Next we want to exclude the cases that $2d - \varepsilon = 0$. Assume again otherwise that
$2d - \varepsilon = 0$. From the generalized Cartan identity for $i=i_k$ in Lemma \ref{le31}, we write
$$
0 = \sum_{m\in (i),(i_1),(i_2),\cdots,(i_t), m\notin (i_k)}
\frac{\tilde b_{i_k}\tilde b_m}{\tilde b_{i_k}-\tilde b_m}=\sum_{m\in (i),(i_1),(i_2),\cdots,(i_t), m\notin (i_k)}
\frac 1 {\frac 1{\tilde b_{i_m}} - \frac 1 {\tilde b_{i_k}}}.
$$
This is impossible if one takes $i_k = i_1$ or $i_k = i_t$. Thus $2d-\varepsilon^2>0$.
\end{proof}

It is obvious that, in order for tensors $T_1$ and $T_2$ to be linearly dependent, all the pairs have to be lined in one set $S_{(i)}(\varepsilon)$. Particularly it is necessary that
$(i) = [i]$ for each $i=1, 2, \cdots, n$, i.e. $r=s$. Let us list all the sets $S_{(\bar 1)}(\varepsilon)$ which have more than one pairs
$$
S_{(\bar 1)}(\varepsilon_1), S_{(\bar 1)}(\varepsilon_2), \cdots, S_{(\bar 1)}(\varepsilon_t),
$$
where $\varepsilon_1 < \varepsilon_2 < \cdots < \varepsilon_t$. Now we are ready to state a theorem for linear dependence of two commuting isoparametric tensors.

\begin{Theorem}\label{case2} Suppose that $T_1$ and $T_2$ are two commuting isoparametric tensors on a Riemannian manifold
$(M^n, \ g)$ and satisfy the relation \eqref{gauss1}. And suppose that $r=s$. If the set $S_{(\bar{1})}(\varepsilon_1)$ has at least three distinct pairs, or $r=s\leq 2$,
then there exists constant $\mu$ such that
\begin{equation}\label{lin-dep}
T_2=\varepsilon_1 T_1+\mu g.
\end{equation}
\end{Theorem}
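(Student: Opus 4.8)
The assertion $T_2=\varepsilon_1 T_1+\mu g$ is equivalent, since $r=s$ forces $T_1$ and $T_2$ to be simultaneously diagonalized with $(i)=[i]$ for every $i$, to the statement that all $s$ distinct pairs $(a_{(1)},b_{(1)}),\dots,(a_{(s)},b_{(s)})$ lie on a single line $a=\varepsilon_1 b+\mu$. If $r=s\le 2$ there are at most two such pairs, which trivially lie on the line through them, and one checks that its slope is $\varepsilon_1$; so from now on I assume $r=s\ge 3$.

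The geometric picture I would set up first is this: every pair other than $(a_{\bar 1},b_{\bar 1})$ lies in exactly one $S_{(\bar 1)}(\varepsilon_j)$, so all pairs lie on the fan of lines issuing from the point $(a_{\bar 1},b_{\bar 1})$ with slopes $\varepsilon_1<\dots<\varepsilon_t$, and by hypothesis the line of slope $\varepsilon_1$, say $a=\varepsilon_1 b+d_1$, carries at least three of them; we must show $t=1$. Because $b_{\bar 1}$ is the smallest eigenvalue of $T_1$ and $\varepsilon_1$ is the smallest slope occurring in the fan, every pair lies on or above that line, i.e.\ $\alpha_k:=a_k-\varepsilon_1 b_k-d_1\ge 0$, with $\alpha_k=0$ exactly for the pairs on $S_{(\bar 1)}(\varepsilon_1)$. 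Setting $\tilde b_k:=b_k+\varepsilon_1$ and $c_1:=2d_1-\varepsilon_1^2$, identity \eqref{cur-line} rewrites as $R_{ijij}=\tilde b_i\tilde b_j+c_1+\alpha_i+\alpha_j$. Here $c_1>0$ by Lemma \ref{le4}, the two extreme pairs of $S_{(\bar 1)}(\varepsilon_1)$ in $b$-order --- namely $(a_{\bar 1},b_{\bar 1})$ and some $(a_{i_p},b_{i_p})$ --- satisfy $\tilde b_{\bar 1}<0<\tilde b_{i_p}$ by Lemma \ref{le5}, and \eqref{abco2} fixes the sectional curvatures along $S_{(\bar 1)}(\varepsilon_1)$ (the consecutive ones $\ge 0$, the extreme one $\le 0$).

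Assume, for contradiction, $t\ge 2$, so some pair $P$ lies strictly above $S_{(\bar 1)}(\varepsilon_1)$ ($\alpha_P>0$). The plan is to reproduce the mechanism of the proof of Lemma \ref{le5}: to exhibit a Cartan identity every one of whose terms is forced to be nonnegative, with at least one strictly positive. The available identities are the per-set identity \eqref{line-cartan} and the generalized Cartan identity \eqref{tensor4}, applied to the isoparametric tensors $T_1$, $T_2$, and (for all but finitely many $\lambda$) $T_2+\lambda T_1$; as a rational function of $\lambda$ this last family vanishes identically, and its residues recover exactly \eqref{line-cartan}. I would anchor the argument at an extreme pair of $S_{(\bar 1)}(\varepsilon_1)$, say at an eigenvector $E_{i_p}$ with $\tilde b_{i_p}>0$, where $R_{i_p k i_p k}=\tilde b_{i_p}\tilde b_k+c_1+\alpha_k$. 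The terms contributed by eigenvectors $k$ whose pairs lie on $S_{(\bar 1)}(\varepsilon_1)$ are controlled in sign by $c_1>0$ and \eqref{abco2}, whereas a pair above the line contributes an extra $+\alpha_k>0$. Partitioning the Cartan sum according to whether $b_k$ lies below or above $b_{i_p}$, and using that from $E_{i_p}$ the slope to any above-the-line pair lying to the left of $b_{i_p}$ is strictly less than $\varepsilon_1$ --- which would re-anchor Lemmas \ref{le4} and \ref{le5} at that pair --- one is driven into a Cartan identity of a definite sign, the contradiction. Running the symmetric argument from the other extreme pair, and from $(a_{\bar 1},b_{\bar 1})$ itself, exhausts the configurations; hence $t=1$ and $T_2=\varepsilon_1 T_1+d_1\, g$.

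The step I expect to be the main obstacle is precisely this sign bookkeeping across the fan: one must choose the anchoring eigenvector and organize the partition of the remaining eigenvectors (by the location of their $T_1$-eigenvalues and by which line of the fan their pairs lie on) so that every term of the resulting Cartan identity has a controlled sign; once such a configuration is in hand the contradiction is immediate, and assembling it from Lemma \ref{le2}, Lemma \ref{le31}, Lemma \ref{le4} and Lemma \ref{le5} together with the sign data of \eqref{abco2} is the whole of the work.
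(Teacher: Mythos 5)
Your framing is correct as far as it goes, and the reduction is sound: with $r=s$ the pairs $(a_{(i)},b_{(i)})$ all lie on the fan of lines through $(a_{\bar 1},b_{\bar 1})$ with slopes $\varepsilon_1<\dots<\varepsilon_t$; the inequality $\alpha_k:=a_k-\varepsilon_1 b_k-d_1\ge 0$ follows because $b_{\bar 1}$ is the smallest $T_1$-eigenvalue and $\varepsilon_1$ is the smallest slope in the fan; $c_1:=2d_1-\varepsilon_1^2>0$ is Lemma \ref{le4}; and $\tilde b_{\bar 1}<0<\tilde b_{i_p}$ for the two $b$-extremes of $S_{(\bar 1)}(\varepsilon_1)$ is Lemma \ref{le5}. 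The rewriting $R_{ijij}=\tilde b_i\tilde b_j+c_1+\alpha_i+\alpha_j$ is also correct. But the theorem is that $t=1$, and the step that would force this is not carried out.

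The plan you outline — anchor a Cartan identity at the extreme eigenvector $E_{i_p}$ and exhibit a sum all of whose terms have one sign — does not go through as stated. At $i=i_p$ with $\tilde b_{i_p}>0$ the generalized Cartan identity \eqref{tensor4} has terms $R_{i_p j i_p j}/(b_j-b_{i_p})=(\tilde b_{i_p}\tilde b_j+c_1+\alpha_j)/(\tilde b_j-\tilde b_{i_p})$; the denominator changes sign at $\tilde b_j=\tilde b_{i_p}$ and the numerator is indeterminate in sign when $\tilde b_j<0$ (the term $\tilde b_{i_p}\tilde b_j$ is negative and competes with $c_1+\alpha_j$). Subtracting the per-set identity \eqref{line-cartan} to restrict the sum to off-line pairs does not remove this ambiguity. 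So ``once such a configuration is in hand the contradiction is immediate'' is exactly the unsupplied content, and that content is the whole theorem; your final paragraph concedes as much.

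The paper's actual route is structured quite differently and does not reduce to a single sign-definite Cartan sum. It first proves three auxiliary lemmas: Lemma \ref{case2-1} ($b_{\bar 1}+\varepsilon_i<0$ for every slope $\varepsilon_i$ in the fan, including the two-pair lines where Lemma \ref{le5} does not apply), Lemma \ref{case2-2} ($b_j^2+2a_j>0$ for all $j$, proved by showing $-\varepsilon_m^2+2d_m>0$ for every line of the fan by interpolation from Lemma \ref{le4} via the identity $-b_{\bar 1}=(d_i-d_j)/(\varepsilon_i-\varepsilon_j)$), and Lemma \ref{case2-3-4} (a structural lemma: for any line $S_{(i_l)}(\varepsilon)$ through a vertex $(a_{i_l},b_{i_l})$ of the first line, if $\varepsilon<\varepsilon_1$ then $b_{i_l}+\varepsilon>0$ and $b_{i_l}$ is maximal on that line, and the symmetric statement if $\varepsilon>\varepsilon_1$). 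With these in hand, the proof anchors at the rightmost vertex $i_k$ of $S_{(\bar 1)}(\varepsilon_1)$ and shows that no additional slope $\varepsilon^k_{j_m}$ can exist there: $\varepsilon^k_{j_m}>\varepsilon_1$ is ruled out by combining Lemma \ref{le5} with Lemma \ref{case2-3-4}; $\varepsilon_1$ between two such slopes is ruled out by Lemma \ref{case2-3-4} alone; and $\varepsilon^k_{j_m}<\varepsilon_1$ for all $m$ and all $l$ is ruled out by an induction on $l=1,\dots,k$, using the minimal off-line point $b_{j_0}$ and repeated applications of \eqref{abco2} and Lemma \ref{case2-3-4}. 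The Cartan identity and the sign information \eqref{abco2} appear, but inside the auxiliary lemmas, not as the final blow. If you want to complete your sketch you should, at minimum, prove an analogue of Lemma \ref{case2-3-4} — it is the lemma that lets one translate ``a secondary line exists at vertex $i_l$'' into a contradiction with the order of $b$-values — since without it the sign partition you describe cannot be made to close.
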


Before the proof of Theorem \ref{case2}, we first establish a sequence of lemmas.  Let
$$S_{(\bar{1})}(\varepsilon_1)=\{(a_1,b_1),(a_{i_1},b_{i_1}),\cdots,(a_{i_k},b_{i_k})\}
$$ and $b_1<b_{i_1}<\cdots<b_{i_k}$ for
some $k\geq 2$.

\begin{lemma}\label{case2-1} Under the assumptions in Theorem \ref{case2},
\begin{equation}\label{eq-case2-1}
b_{\bar 1} +\varepsilon_i < 0
\end{equation}
for all $i=1, 2, \cdots, t$.
\end{lemma}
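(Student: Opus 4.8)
The plan is to sort the slopes $\varepsilon_1<\cdots<\varepsilon_t$ according to whether the associated set $S_{(\bar 1)}(\varepsilon_i)$ has at least three distinct pairs or exactly two. The observation I would use throughout is that $b_{\bar 1}$ is the \emph{smallest} eigenvalue of $T_1$, so whenever the pair $(a_{\bar 1},b_{\bar 1})$ lies in a set $S_{(\bar 1)}(\varepsilon)$ it occupies the lowest slot in the ordering $b_{i_1}<b_{i_2}<\cdots$ of that set. First I would dispose of the slopes with large sets: if $S_{(\bar 1)}(\varepsilon_i)$ has at least three distinct pairs, then Lemma \ref{le5} applied to that set gives $b_{\bar 1}+\varepsilon_i<0$ immediately. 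In particular the standing hypothesis that $S_{(\bar 1)}(\varepsilon_1)$ has at least three distinct pairs yields $b_{\bar 1}+\varepsilon_1<0$; and, substituting the pair $(a_{\bar 1},b_{\bar 1})$ into the line $a=\varepsilon_1 b+d_1$ carrying the pairs of $S_{(\bar 1)}(\varepsilon_1)$ and invoking $\varepsilon_1^2-2d_1<0$ from Lemma \ref{le4}, I obtain the key auxiliary inequality
\begin{equation*}
2a_{\bar 1}+b_{\bar 1}^2 \;>\; b_{\bar 1}^2+2\varepsilon_1 b_{\bar 1}+\varepsilon_1^2 \;=\;(b_{\bar 1}+\varepsilon_1)^2 \;>\;0 .
\end{equation*}

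Next I would treat a slope $\varepsilon_i$ whose set has exactly two pairs, say $S_{(\bar 1)}(\varepsilon_i)=\{(a_{\bar 1},b_{\bar 1}),(a_j,b_j)\}$ with $b_{\bar 1}<b_j$ (the strict inequality holding because $r=s$ forces $(\bar 1)=[\bar 1]$ and $b_{\bar 1}$ is the smallest eigenvalue). Here Lemma \ref{le2}(4) gives $b_{\bar 1}b_j+a_{\bar 1}+a_j=0$, while the definition of $S_{(\bar 1)}(\varepsilon_i)$ gives $a_{\bar 1}-a_j=\varepsilon_i(b_{\bar 1}-b_j)$; eliminating $a_j$ between these two identities produces
\begin{equation*}
2a_{\bar 1}+b_{\bar 1}^2=(b_{\bar 1}-b_j)(b_{\bar 1}+\varepsilon_i).
\end{equation*}
The left side is strictly positive by the auxiliary inequality above, and $b_{\bar 1}-b_j<0$, so $b_{\bar 1}+\varepsilon_i<0$, which finishes this case. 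Together the two cases cover every slope $\varepsilon_i$ whenever $S_{(\bar 1)}(\varepsilon_1)$ has at least three distinct pairs.

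It remains to deal with the degenerate hypothesis $r=s\leq 2$: when $r=1$ there are no slopes and the assertion is vacuous, and when $r=s=2$ there is a single slope $\varepsilon_1$ with $S_{(\bar 1)}(\varepsilon_1)=\{(a_{\bar 1},b_{\bar 1}),(a_{\bar 2},b_{\bar 2})\}$ of only two pairs, so Lemma \ref{le4} is unavailable and one argues directly from Lemma \ref{le2}(4), which via the same elimination reduces $b_{\bar 1}+\varepsilon_1<0$ to the positivity of $b_{\bar 1}^2+2a_{\bar 1}$, the sectional curvature of $g$ restricted to the first eigenspace. The step I expect to be the main obstacle is precisely the two‑pair case: such a set $S_{(\bar 1)}(\varepsilon_i)$ carries on its own too little curvature data (Lemma \ref{le2}(4) records only one vanishing sectional curvature), so one must couple it to the distinguished three‑pair set $S_{(\bar 1)}(\varepsilon_1)$ through the common pair $(a_{\bar 1},b_{\bar 1})$, and the strictness of the final inequality then rests on the strict inequalities in Lemma \ref{le5} and Lemma \ref{le4}, whose proofs in turn lean on the generalized Cartan identity \eqref{line-cartan}.
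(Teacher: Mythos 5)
Your main argument is correct and follows essentially the same structure as the paper's proof: dispose of the slopes whose sets $S_{(\bar 1)}(\varepsilon_i)$ have at least three pairs by a direct application of Lemma~\ref{le5} (noting $b_{\bar 1}$ is always the smallest eigenvalue), and then handle the two-pair sets $\{(a_{\bar 1},b_{\bar 1}),(a_j,b_j)\}$ by combining the vanishing identity $b_{\bar 1}b_j+a_{\bar 1}+a_j=0$ from Lemma~\ref{le2}(4) with the positivity of $b_{\bar 1}^2+2a_{\bar 1}$ to conclude $(b_{\bar 1}-b_j)(b_{\bar 1}+\varepsilon_i)>0$, hence $b_{\bar 1}+\varepsilon_i<0$. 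The one place where you diverge from the paper is the way you establish $b_{\bar 1}^2+2a_{\bar 1}>0$: you invoke Lemma~\ref{le4} directly and write $b_{\bar 1}^2+2a_{\bar 1}=(b_{\bar 1}+\varepsilon_1)^2+(2d_1-\varepsilon_1^2)>0$, whereas the paper reaches the same inequality by a comparison with the adjacent-pair curvature $R_{1i_11i_1}=b_{\bar 1}b_{i_1}+a_{\bar 1}+a_{i_1}\geq 0$ from Lemma~\ref{le31} together with $b_{\bar 1}+\varepsilon_1<0$. Both routes are valid; yours is more compact but leans on the heavier Lemma~\ref{le4}, while the paper's uses only inequalities already in play.

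The one genuine weak spot is your parenthetical treatment of the degenerate alternative $r=s\leq 2$. You reduce the two-pair elimination to ``positivity of $b_{\bar 1}^2+2a_{\bar 1}$, the sectional curvature of $g$ restricted to the first eigenspace,'' but that quantity is only a sectional curvature when $[\bar 1]$ has multiplicity at least two, and even then there is no reason for it to be positive: the only constraint available from Lemma~\ref{le2}(4) in the $r=s=2$ case is $b_{\bar 1}b_{\bar 2}+a_{\bar 1}+a_{\bar 2}=0$, which is perfectly compatible with $b_{\bar 1}^2+2a_{\bar 1}<0$ (e.g.\ $b_{\bar 1}=0$, $b_{\bar 2}=1$, $a_{\bar 1}=-1$, $a_{\bar 2}=1$). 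So the inequality $b_{\bar 1}+\varepsilon_1<0$ need not hold in the $r=s=2$ branch, and your appeal to a sectional-curvature interpretation does not rescue it. This does not affect the correctness of Theorem~\ref{case2}, because the paper disposes of $r=s\leq 2$ trivially at the start of its proof without ever invoking Lemma~\ref{case2-1}; but as stated your coverage of that case is unjustified, and it would be cleaner simply to note that the lemma is only used in the branch where $S_{(\bar 1)}(\varepsilon_1)$ has at least three pairs.
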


\begin{proof} In the light of Lemma \ref{le5} one may only need to consider the cases when $S_{(\bar 1)}(\varepsilon_i) $ has exactly two pairs, say,
$S_{(\bar 1)}(\varepsilon_i) =\{(a_1, b_1), (a_j, b_j)\}$ for some
$i=2, 3, \cdots, t$. Hence, from Lemma \ref{le31}, we know that
$$
b_1b_j +a_1 +a_j = 0.
$$
On the other hand,
$$b_1^2+2a_1-(b_1b_{i_1}+a_1+a_{i_1})=(b_1-b_{i_1})(b_1+\frac{a_1-a_{i_1}}{b_1-b_{i_1}})=(b_1-b_{i_1})(b_1+\varepsilon_1)>0.
$$
Since $b_{1} +\varepsilon_1 <0$ due to Lemma \ref{le5}. Therefore
$$
b_1^2 + 2a_1 > b_1b_{i_1} + a_1 + a_{i_1} = R_{1i_11i_1} \geq 0
$$
by Lemma \ref{le31} again. Thus
$$
0 < b_1^2 +2a_1 -(b_1b_j +a_1+a_j) = (b_1 - b_j)(b_1 + \varepsilon_i),
$$
which implies that $b_1 +\varepsilon_i < 0$. So the proof is complete.
\end{proof}

To set some notations, let the line equation for each set $S_{(\bar 1)}(\varepsilon_m)$ is
$$
a = \varepsilon_m b + d_m,~~m=1,2,\cdots, t.
$$
Then we have
\begin{equation}\label{aba}
- b_{\bar 1} = \frac {d_i - d_j}{\varepsilon_i - \varepsilon_j}
\end{equation}
for all $i,j = 1, 2, \cdots, t$.

\begin{lemma}\label{case2-2} Under the assumptions in Theorem \ref{case2},
\begin{equation}\label{eq:case2-2}
b_j^2 +2a_i > 0
\end{equation}
for all $j=1, 2, \cdots n$.
\end{lemma}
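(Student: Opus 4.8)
The plan is to show that the quantity $b_j^2+2a_j$ is positive for every $j$ (this is the content of the asserted inequality, and it is exactly the input that excludes the reducible alternative of Theorem \ref{redu3}). I work in the branch of Theorem \ref{case2} where $S_{(\bar 1)}(\varepsilon_1)$ has at least three distinct pairs, so that $(a_1,b_1)=(a_{\bar 1},b_{\bar 1})$ carries the least first coordinate $b_1=b_{\bar 1}$ among the eigenvalues of $T_1$, every set $S_{(\bar 1)}(\varepsilon)$ contains $(a_1,b_1)$ by definition, and the slopes are ordered $\varepsilon_1<\varepsilon_2<\cdots<\varepsilon_t$.

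First I would fix $j$ and identify the eigenline on which $(a_j,b_j)$ lies. If $b_j=b_1$, then since $r=s$ forces $(i)=[i]$ for all $i$ we get $a_j=a_1$, so $b_j^2+2a_j=b_1^2+2a_1$, a case covered at the end. Otherwise $b_j>b_1$, the slope $\varepsilon:=(a_1-a_j)/(b_1-b_j)$ is finite, and $S_{(\bar 1)}(\varepsilon)$ already contains the two distinct pairs $(a_1,b_1)$ and $(a_j,b_j)$; hence $\varepsilon=\varepsilon_m$ for some $m\in\{1,\cdots,t\}$, and Lemma \ref{case2-1} gives $b_1+\varepsilon_m<0$. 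Write $a=\varepsilon_m b+d_m$ for the line of $S_{(\bar 1)}(\varepsilon_m)$ and $a=\varepsilon_1 b+d_1$ for that of $S_{(\bar 1)}(\varepsilon_1)$.

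The heart of the argument is a one‑variable computation. Put $x=b_j-b_1\ge 0$; since $(a_1,b_1)$ and $(a_j,b_j)$ both lie on the $\varepsilon_m$-line, $a_j=a_1+\varepsilon_m x$, so
\[
b_j^2+2a_j=(b_1^2+2a_1)+2(b_1+\varepsilon_m)x+x^2=:q(x),
\]
a monic quadratic in $x$. Since $(a_1,b_1)$ lies on the $\varepsilon_1$-line, $b_1^2+2a_1=(b_1+\varepsilon_1)^2+(2d_1-\varepsilon_1^2)$, and Lemma \ref{le4} (applicable because $S_{(\bar 1)}(\varepsilon_1)$ has at least three pairs) gives $2d_1-\varepsilon_1^2>0$. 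I then claim the discriminant of $q$ is negative, i.e. $(b_1+\varepsilon_m)^2<b_1^2+2a_1$. For $m=1$ this is immediate from the displayed expression for $b_1^2+2a_1$. For $m>1$ one has $\varepsilon_m>\varepsilon_1$, hence
\[
(b_1+\varepsilon_m)^2-(b_1+\varepsilon_1)^2=(\varepsilon_m-\varepsilon_1)\bigl[(b_1+\varepsilon_m)+(b_1+\varepsilon_1)\bigr]<0,
\]
because $\varepsilon_m-\varepsilon_1>0$ while $b_1+\varepsilon_m<0$ and $b_1+\varepsilon_1<0$ by Lemma \ref{case2-1}; so $(b_1+\varepsilon_m)^2<(b_1+\varepsilon_1)^2<b_1^2+2a_1$. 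In all cases $q$ is monic with negative discriminant, hence $q(x)>0$ for all real $x$; in particular $b_j^2+2a_j=q(x)>0$, and evaluating at $x=0$ also yields $b_1^2+2a_1>0$, which settles the remaining case $b_j=b_1$.

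The only real difficulty I foresee is the bookkeeping that isolates the three‑pair hypothesis to a single use: the positivity $2d_1-\varepsilon_1^2>0$ coming from Lemma \ref{le4} is the one ``absolute'' fact, and everything else follows from the structural observation (Lemma \ref{case2-1}) that every eigenline through the least pair has slope strictly less than $-b_1$. Packaging $b_j^2+2a_j$ as a monic quadratic in $x=b_j-b_1$ is what lets this one comparison handle all $j$ simultaneously, in particular absorbing the two‑pair sets $S_{(\bar 1)}(\varepsilon_m)$ — on which one could alternatively invoke $R_{1j1j}=0$ from Lemma \ref{le2}(4), only to be reduced again to the very same inequality $(b_j-b_1)^2>b_1^2+2a_1$.
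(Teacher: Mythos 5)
Your proof is correct and follows essentially the same route as the paper. The paper writes $b_j^2+2a_j=(b_j+\varepsilon_m)^2-\varepsilon_m^2+2d_m$ and reduces the claim to $-\varepsilon_m^2+2d_m>0$; you instead package $b_j^2+2a_j$ as a monic quadratic $q(x)$ in $x=b_j-b_1$ and reduce the claim to the negative-discriminant condition $(b_1+\varepsilon_m)^2<b_1^2+2a_1$. These two conditions are literally the same number (the common vertex value of the quadratic $b\mapsto b^2+2(\varepsilon_m b+d_m)$), and you establish it by the identical comparison against the $\varepsilon_1$-line using Lemma \ref{le4} for the base case and Lemma \ref{case2-1} (via the collinearity relation \eqref{aba}) for the step $m>1$; the only cosmetic difference is that you spell out the $b_j=b_1$ subcase, which the paper absorbs implicitly since that pair then lies on every $S_{(\bar 1)}(\varepsilon_m)$.
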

\begin{proof} First, for a given pair $(a_j, b_j)$, it lies in $S_{(\bar 1)}(\varepsilon_{m})$ for some $m=1, 2, \cdots, t$. Then
$$
 b_j^2 + 2a_j = (b_j + \varepsilon_{m})^2 - \varepsilon_{m}^2 + 2d_{m}.
$$
To prove $b_j^2 +2a_j >0$ one may verify that $-\varepsilon_{m}^2 + 2d_{m}>0$.
From Lemma \ref{le4}, we know that $ - \varepsilon^2_1+ 2d_1 >0$. Hence we use \eqref{aba} to calculate
$$
-\varepsilon_{m}^2 + 2d_{m} - (- \varepsilon^2_1+ 2d_1) = (\varepsilon_1 - \varepsilon_{m})( 2b_1 + \varepsilon_1 + \varepsilon_{m}),
$$
which is positive according to Lemma \ref{case2-1}. Therefore $-\varepsilon_{m}^2 + 2d_{m}>0$ and the proof is complete.
\end{proof}
The following lemma is very useful to understand what are the possible lines that connect all pairs in $W$.
Notice that, if $S_{(\bar 1)}(\varepsilon_1)$ does not contain all the pairs, then the set $S_{(i_l)}(\varepsilon)$ for some $\varepsilon\neq\varepsilon_1$
must contain more than one pairs.

\begin{lemma}\label{case2-3-4} Assume the same assumptions of Theorem \ref{case2} hold. Then, for any set $S_{(i_l)}(\varepsilon), l=1,\cdots,k,$ that contains more than
one pairs, if $\varepsilon < \varepsilon_1$, then $b_{i_l} + \varepsilon >0$
and $b_{i_l} \geq b_j$ for all $(a_j, b_j)\in S_{({i_l})}(\varepsilon)$.
Similarly, if $\varepsilon > \varepsilon_1$, then $b_{i_l} + \varepsilon < 0$
and $b_{i_l} \leq b_j$ for all $(a_j, b_j)\in S_{({i_l})}(\varepsilon)$.
\end{lemma}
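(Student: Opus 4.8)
The plan is to reuse the machinery that produced Lemma \ref{case2-1} and Lemma \ref{case2-2}, namely the identities $R_{ijij}=b_ib_j+a_i+a_j$ from \eqref{gauss1} together with the sign information on sectional curvatures coming from Lemma \ref{le31}. Fix a set $S_{(i_l)}(\varepsilon)$ with at least two pairs and consider the two pairs $(a_1,b_1)\in S_{(\bar 1)}(\varepsilon_1)$ (the smallest $b$-value, lying also in $S_{(i_l)}(\varepsilon_1)$ if $\varepsilon=\varepsilon_1$) and an arbitrary $(a_j,b_j)\in S_{(i_l)}(\varepsilon)$. The key arithmetic identity is that, since both $(a_{i_l},b_{i_l})$ and $(a_1,b_1)$ satisfy the line equation $a=\varepsilon_1 b+d_1$ for $S_{(\bar 1)}(\varepsilon_1)$, while $(a_{i_l},b_{i_l})$ and $(a_j,b_j)$ satisfy $a=\varepsilon b+d$ for $S_{(i_l)}(\varepsilon)$, one can eliminate the intercepts and express the difference $R_{i_l\, j\, i_l\, j}-R_{i_l\,1\,i_l\,1}$ as a product of $(b_j-b_1)$ with $(b_{i_l}+\varepsilon)$ or similar linear factors — exactly the kind of factorization used in the proof of Lemma \ref{case2-1}.

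First I would record, from Lemma \ref{le31} applied to the set $S_{(i_l)}(\varepsilon)$, that $R_{i_l\, j\, i_l\, j}\geq 0$ when $(a_j,b_j)$ is the pair adjacent to $(a_{i_l},b_{i_l})$ on the appropriate side, and $\leq 0$ when it is the extreme pair; and from Lemma \ref{case2-2} that $b_{i_l}^2+2a_{i_l}>0$, i.e. $R$ restricted to the diagonal is strictly positive. Then, assuming $\varepsilon<\varepsilon_1$, I would compute
\begin{equation*}
b_{i_l}^2+2a_{i_l}-\big(b_{i_l}b_j+a_{i_l}+a_j\big)=(b_{i_l}-b_j)\Big(b_{i_l}+\frac{a_{i_l}-a_j}{b_{i_l}-b_j}\Big)=(b_{i_l}-b_j)(b_{i_l}+\varepsilon).
\end{equation*}
Since the left side equals $b_{i_l}^2+2a_{i_l}-R_{i_l\,j\,i_l\,j}>0$ by Lemma \ref{case2-2} and the curvature sign, the product $(b_{i_l}-b_j)(b_{i_l}+\varepsilon)$ is positive for every $j$ with $(a_j,b_j)\in S_{(i_l)}(\varepsilon)$, $j\notin(i_l)$. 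A separate short argument using $\varepsilon<\varepsilon_1$ and Lemma \ref{case2-1} (which gives $b_1+\varepsilon_1<0$, hence the pair $(a_1,b_1)$ on the line $S_{(\bar 1)}(\varepsilon_1)$ forces $b_{i_l}+\varepsilon$ to have a definite sign via $-b_{i_l}$ being a convex combination, cf. \eqref{aba}) pins down $b_{i_l}+\varepsilon>0$; then the positivity of the product yields $b_{i_l}-b_j\geq 0$, i.e. $b_{i_l}\geq b_j$. The case $\varepsilon>\varepsilon_1$ is symmetric: one gets $(b_{i_l}-b_j)(b_{i_l}+\varepsilon)$ still governed by the same identity, the sign of $b_{i_l}+\varepsilon$ flips to negative, and hence $b_{i_l}\leq b_j$.

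The main obstacle I anticipate is cleanly establishing the sign of $b_{i_l}+\varepsilon$ in the two-pair case, where Lemma \ref{le5} does not directly apply; the workaround is to intersect $S_{(i_l)}(\varepsilon)$ with $S_{(\bar 1)}(\varepsilon_1)$ at the common pair $(a_{i_l},b_{i_l})$, use the ordering $\varepsilon<\varepsilon_1$ together with $b_1+\varepsilon_1<0$ from Lemma \ref{case2-1}, and compare slopes to force $b_{i_l}$ past the point where the lines would cross below the parabola $a=-b^2/2$ — exactly the geometric picture underlying Lemma \ref{le4} and Lemma \ref{case2-2}. Once that sign is fixed, everything else is the routine factorization above.
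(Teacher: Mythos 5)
Your plan correctly identifies the algebraic centerpiece, namely the identity
\begin{equation*}
b_{i_l}^2 + 2a_{i_l} - R_{i_l\,j\,i_l\,j} \;=\; b_{i_l}^2 + 2a_{i_l} - (b_{i_l}b_j + a_{i_l} + a_j) \;=\; (b_{i_l}-b_j)(b_{i_l}+\varepsilon),
\end{equation*}
and correctly invokes Lemma~\ref{case2-2} for $b_{i_l}^2+2a_{i_l}>0$. But the deduction ``the left side is $>0$ by Lemma~\ref{case2-2} and the curvature sign'' is circular: Lemma~\ref{le31} only gives $R_{i_l j i_l j}\leq 0$ for the \emph{extreme} pair, while for a pair adjacent to $(a_{i_l},b_{i_l})$ it gives $R_{i_l j i_l j}\geq 0$ with no upper bound, so $b_{i_l}^2 + 2a_{i_l} - R_{i_l j i_l j}$ is not evidently positive. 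Indeed, knowing that this quantity is nonnegative for every $j$ is \emph{equivalent} to the ordering $b_j\leq b_{i_l}$ (once one knows $b_{i_l}+\varepsilon>0$), so you cannot use it to prove the ordering. Likewise, your ``separate short argument'' for $b_{i_l}+\varepsilon>0$ is only a geometric sketch (lines crossing below a parabola, convex combination of intercepts); it is not a proof, and Lemma~\ref{le5} cannot be applied before the ordering is known because one must first identify $b_{i_l}$ as the maximum of the set.

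The missing ingredient is the intercept argument that the paper uses. Define $d_j = a_j - \varepsilon_1 b_j$. Since $b_1=b_{\bar 1}$ is the minimal eigenvalue and $\varepsilon_1$ is the smallest slope of any secant through $(a_1,b_1)$, one gets $d_1 - d_j = (b_1-b_j)\bigl(\tfrac{a_1-a_j}{b_1-b_j}-\varepsilon_1\bigr) < 0$ for $j$ off the $\varepsilon_1$-line, hence $d_j$ attains its \emph{minimum} precisely on $[1]\cup[i_1]\cup\cdots\cup[i_k]$, in particular at $i_l$. The identity
\begin{equation*}
d_j - d_{i_l} = (b_j - b_{i_l})(\varepsilon-\varepsilon_1) \;\geq\; 0
\end{equation*}
then yields $b_j\leq b_{i_l}$ directly when $\varepsilon<\varepsilon_1$ (and $b_j\geq b_{i_l}$ when $\varepsilon>\varepsilon_1$). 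Only \emph{after} this ordering is in hand does the sign of $b_{i_l}+\varepsilon$ follow: for three or more pairs, Lemma~\ref{le5} applies since $b_{i_l}$ is now known to be the largest (resp.\ smallest); for exactly two pairs, Lemma~\ref{le2}(4) gives $R_{j i_l j i_l}=0$, and your factorization together with Lemma~\ref{case2-2} and $b_{i_l}>b_j$ forces $b_{i_l}+\varepsilon>0$. So the factorization you propose is the right tool for the final step, but it cannot carry the weight of establishing the ordering, and without the $d_j$-minimality observation the argument does not close.
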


\begin{proof} Let $d_j= a_j - \varepsilon_1b_j$ for $j =1, 2, \cdots, n$. Then it is clear that
$d_j=d_1$ when $j\in [1]\cup[i_1]\cup\cdots\cup[i_k]$. On the other hand, if $j\notin [1]\cup[i_1]\cdots\cup[i_k]$, then
$$
d_1-d_j=(b_1-b_j)(-\varepsilon_1+\frac{a_1-a_j}{b_1-b_j}) <0.
$$
Because $\varepsilon_1$ is the smallest slope among all lines passing through $(a_1, b_1)$. Therefore
\begin{equation}\label{1db1}
d_1=d_j=min\{d_1,d_2,\cdots,d_n\},~~j\in [1]\cup[i_1]\cup\cdots\cup[i_k].
\end{equation}
First, we consider $\varepsilon < \varepsilon_1$.  for any $(a_j,b_j)\in {S}_{({i_l})}(\varepsilon)\setminus\{(a_{i_l}, b_{i_l})\}$, we have
\begin{equation*}
d_j-d_{i_l}=(b_j-b_{i_l})(\frac{a_{i_l}-a_j}{b_{i_l}-b_j}-\varepsilon_1)=(b_j-b_{i_l})(\varepsilon-\varepsilon_1).
\end{equation*}
Thus, by (\ref{1db1}), we see that $b_j\leq b_{i_l}$.

Again, to show $b_{i_l}+\varepsilon >0$, in the light of Lemma \ref{le5}, one may assume that $S_{({i_l})}(\varepsilon)$ has exactly two pairs, say,
$S_{(\bar{i_l})}(\varepsilon) = \{(a_j, b_j), (a_{i_l}, b_{i_l})\}$. From Lemma \ref{le31}, we know
$$R_{ji_lji_l}=b_jb_{i_l}+a_j+a_{i_l}=0.
$$
Similar to the proof of Lemma \ref{case2-1}, we calculate
$$
b_{i_l}^2  + 2a_{i_l} = b_{i_l}^2  + 2a_{i_l} - (b_jb_{i_l}+a_j+a_{i_l}) = (b_{i_l} - b_j)(b_{i_l} + \varepsilon),
$$
which implies $b_{i_l}+\varepsilon > 0$ due to Lemma \ref{case2-2}. The above proof works for the case $\varepsilon > \varepsilon_1$.
Thus the proof is complete.
\end{proof}

We now are ready to present the proof for Theorem \ref{case2}.

\begin{proof}[Proof of Theorem \ref{case2}] For the case $r=s\leq 2$, obviously $T_2=\varepsilon_1 T_1+\mu g$ for some constant $\mu$.
Next let $r=s\geq 2$.
We assume otherwise that $S_{(\bar 1)}(\varepsilon_1)$ does not contain all the pairs. Recall
$$S_{(\bar{1})}(\varepsilon_1)=\{(a_1,b_1),(a_{i_1},b_{i_1}),\cdots,(a_{i_k},b_{i_k})\}
$$ and $b_1<b_{i_1}<\cdots<b_{i_k}$ for
some $k\geq 2$. For each $i_l\in \{i_1,\cdots,i_k\}$ fixed, let
$${S}_{(i_l)}(\varepsilon_1), {S}_{(i_l)}(\varepsilon_{j_1}^l),\cdots, {S}_{(i_l)}(\varepsilon_{j_h}^l)$$
be the full list of the set that contain more than just the pair $(a_{i_l}, b_{i_l})$ and
$\varepsilon_{j_1}^l<\cdots<\varepsilon_{j_h}^l$, where $h\geq 1$ and may depend of $l$. Our argument is to show there is no way
to compare $\varepsilon_1$ with the rest slopes $\varepsilon^l_{j_m}$ when $l=k$.

It is easy to see that $\varepsilon_1$ has to be large than each slope $\varepsilon^k_{j_m}$. Assume otherwise $\varepsilon_1 < \varepsilon^k_{j_m}$ for some $m$.
Then, on one hand, applying Lemma \ref{le5} to $S_{(\bar 1)}(\varepsilon_1)$, we obtain
$$
b_{i_k} + \varepsilon_1 >0.
$$
On the other hand, applying Lemma \ref{case2-3-4} to $S_{(i_k)}(\varepsilon^k_{j_m})$, we obtain
$$
b_{i_k} + \varepsilon_1 < b_{i_k} + \varepsilon^k_{j_m} < 0,
$$
which is a contradiction.

Next we want to show that it is also impossible to have $\varepsilon_1$ larger than each slope $\varepsilon^k_{j_m}$. But, first, we can easily see that $\varepsilon_1$
can not lie in between two slopes $\varepsilon^l_{j_m}$ for any $l=1, 2, \cdots, k$. Because, if it happened, then we would have from Lemma \ref{case2-3-4} that
$$
b_{i_l} + \varepsilon^l_{j_{m-1}} > 0 \text{ and } b_{i_l} + \varepsilon^l_{j_m} < 0,
$$
which is a contradiction. To finish the proof we show inductively for $l=1, 2, \cdots, k$ that it is impossible that
$$
\varepsilon^l_{j_1}<\varepsilon^l_{{j_2}}<\cdots<\varepsilon^l_{j_h}<\varepsilon_1.
$$

Before we start the induction, we define
$$b_{j_0}=min\{b_j| ~(a_j,b_j)\notin {S}_{(\bar 1)}(\varepsilon_1)\}.
$$
When $l=1$,  if $\varepsilon_1$ is larger than every other slope $\varepsilon^1_{j_m}$, then using Lemma \ref{case2-3-4},  we know $b_{i_1}$ and $b_{j_0}$
are the largest and the smallest for pairs in the line that connects $(a_{j_0}, b_{j_0})$ to $(a_{i_1}, b_{i_1})$. Therefore, by Lemma \ref{le31},
$$R_{1i_11i_1}=b_1b_{i_1}+a_1+a_{i_1}\geq 0,~~R_{i_1j_0i_1j_0}=b_{i_1}b_{j_0}+a_{i_1}+a_{j_0}\leq 0,$$
which implies
$0\leq R_{1i_11i_1}-R_{i_1i_0i_1i_0}=(b_1-b_{j_0})(b_{i_1}+\frac{a_1-a_{j_0}}{b_1-b_{j_0}})$
and hence $$b_{i_1}+\frac{a_1-a_{j_0}}{b_1-b_{j_0}}\leq 0.$$
On the other hand, from Lemma \ref{case2-3-4}, we have
$$
0 < b_{i_1} + \varepsilon^1_{j_m} <b_{i_1} + \frac{a_1-a_{j_0}}{b_1-b_{j_0}}
$$
which is a contradiction. Here we used the fact that $\varepsilon_1 < \frac{a_1-a_{j_0}}{b_1-b_{j_0}}$.
So $\varepsilon_1$ can only be smaller than every other slope $\varepsilon^1_{j_m}$
and
\begin{equation}\label{ind-1}
\varepsilon_1 < \frac {a_{i_1} - a_{j_0}}{b_{i_1} - b_{j_0}} \text{ and } b_{i_1} < b_{j_0}.
\end{equation}
Similarly, from Lemma \ref{le31},
$$
R_{i_1i_2i_1i_2}  \geq 0 \text{ and } R_{i_2j_0i_2j_0} \leq 0,
$$
which implies $0\leq R_{i_1i_2i_1i_2}-R_{i_2j_0i_2j_0}=(b_{i_1}-b_{j_0})(b_{i_2}+\frac{a_{i_1}-a_{j_0}}{b_{i_1}-b_{j_0}})$
and $$b_{i_2}+\frac{a_{i_1}-a_{j_0}}{b_{i_1}-b_{j_0}}\leq 0.
$$
If otherwise $\varepsilon_1$ is larger than every slope $\varepsilon^2_{j_m}$, then, from Lemma \ref{case2-3-4},
$$
0 < b_{i_2} + \varepsilon^2_{j_m}  < b_{i_2} + \varepsilon_1 < b_{i_2} + \frac{a_{i_1}-a_{j_0}}{b_{i_1}-b_{j_0}},
$$
which is a contradiction and implies
\begin{equation}\label{ind-1}
\varepsilon_1 < \frac {a_{i_2} - a_{j_0}}{b_{i_2} - b_{j_0}} \text{ and } b_{i_2} < b_{j_0}.
\end{equation}
By induction, we can prove $\varepsilon_1$ cannot be larger than every slope $\varepsilon^k_{j_m}$. Thus the proof is finished.
\end{proof}


\subsection{Reducible cases}\label{subsec:redu-case}

In this subsection we want to show that, if the assumptions in Theorem \ref{case2} for two commuting isoparametric tensors that satisfy \eqref{gauss1} are not true, then
the underlined Riemannain manifold has to be reducible. The first cases are when $r< s$, that is, when $T_2$ restricted to some eigensapce $V_{b_i}$ has two distinct
eigenvalues. The other cases are when the set $S_{(\bar 1)}(\varepsilon_1)$ has two distinct pairs.

Let us deal with the first cases. In the light of Lemma \ref{ble2}, we may assume that for some $\bar k = \bar 1, \bar 2, \cdots, \bar r$
\begin{equation}\label{split-k}
[\bar k] = (k_1) \bigcup (k_2) \text{ and } a_{k_1} < a_{k_2}.
\end{equation}

\begin{lemma} \label{case1-1} Suppose that $T_1$ and $T_2$ are two commuting isoparametric tensors on a Riemannian manifold $(M^n, \ g)$ that satisfy the condition \eqref{gauss1}.
And suppose that \eqref{split-k} holds for some $\bar k = \bar 1, \bar 2, \cdots, \bar r$. Then any set $S_{(k_1)}(\varepsilon)$ has at most two pairs.
\end{lemma}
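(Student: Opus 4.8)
The plan is to treat the slope $\varepsilon=\infty$ separately from the finite slopes. When $\varepsilon=\infty$ there is nothing to prove, since Lemma~\ref{le2}(1) already states that $S_{(k_1)}(\infty)$ has at most two pairs. So I may assume $\varepsilon$ is finite and argue by contradiction, supposing that $S_{(k_1)}(\varepsilon)$ contains at least three distinct pairs.

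The first step is to extract a scalar identity from the splitting hypothesis \eqref{split-k}. Since $[\bar k]=(k_1)\cup(k_2)$ with $a_{k_1}<a_{k_2}$, there are exactly two elements of $W$ whose second coordinate equals $b_{\bar k}$, namely $(a_{k_1},b_{\bar k})$ and $(a_{k_2},b_{\bar k})$, and both of them lie in $S_{(k_1)}(\infty)$ by the very definition of this set (the slope between them is $\infty$, and $k_2\notin(k_1)$); combined with Lemma~\ref{le2}(1) this forces
\[
S_{(k_1)}(\infty)=\{(a_{k_1},b_{\bar k}),(a_{k_2},b_{\bar k})\}.
\]
Applying part (4) of Lemma~\ref{le2} to this two-element set then yields
\[
b_{\bar k}^{\,2}+a_{k_1}+a_{k_2}=0.
\]

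For the second step I use the standing assumption that $S_{(k_1)}(\varepsilon)$ has at least three pairs. These pairs are collinear on a line $a=\varepsilon b+d$ through $(a_{k_1},b_{\bar k})$, so $a_{k_1}=\varepsilon b_{\bar k}+d$, and Lemma~\ref{le4} (whose hypotheses are exactly ours together with this three-pair assumption) gives the \emph{strict} inequality $\varepsilon^{2}-2d<0$. Substituting the identity from the first step,
\[
a_{k_2}-a_{k_1}=-b_{\bar k}^{\,2}-2a_{k_1}=-(b_{\bar k}+\varepsilon)^{2}+(\varepsilon^{2}-2d)<0,
\]
which contradicts $a_{k_1}<a_{k_2}$. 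Hence $S_{(k_1)}(\varepsilon)$ can have at most two pairs, completing the proof.

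I do not anticipate a genuine obstacle here: every ingredient is a direct invocation of an already-proved lemma together with one line of algebra. The only points requiring a little care are verifying that $S_{(k_1)}(\infty)$ is genuinely the two-element set $\{(a_{k_1},b_{\bar k}),(a_{k_2},b_{\bar k})\}$, so that Lemma~\ref{le2}(4) applies in its two-pair form, and using that Lemma~\ref{le4} delivers the strict inequality (rather than a non-strict one) under the three-pair hypothesis; the $-(b_{\bar k}+\varepsilon)^2\le 0$ term then makes the final estimate strict even if $b_{\bar k}+\varepsilon=0$.
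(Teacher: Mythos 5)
Your proof is correct, and it takes a genuinely different and shorter route than the paper's. The paper argues by contradiction through a trichotomy on the position of $b_{\bar k}$ relative to the other abscissas $b_{j_1}<\cdots<b_{j_t}$ in the hypothetical three-pair set $S_{(k_1)}(\varepsilon)$: it separately excludes the cases where $b_{\bar k}$ lies between two of them, below all of them, or above all of them, each time deriving sign contradictions from the nonnegativity/nonpositivity constraints of Lemma~\ref{le31} combined with Lemma~\ref{le5} and the identity $b_{\bar k}^2+a_{k_1}+a_{k_2}=0$ of Lemma~\ref{ble2}. You instead invoke Lemma~\ref{le4} (the strict slope--intercept inequality $\varepsilon^2-2d<0$), which applies verbatim under the three-pair assumption, and combine it with the same eigenspace identity to produce a one-line algebraic contradiction $a_{k_2}-a_{k_1}=-(b_{\bar k}+\varepsilon)^2+(\varepsilon^2-2d)<0$. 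This is cleaner and bypasses the sign-chasing entirely; it also makes visible why the splitting hypothesis \eqref{split-k} is incompatible with three collinear pairs through $(a_{k_1},b_{\bar k})$ -- the line's intercept has the wrong sign. One small attribution note: the identity $b_{\bar k}^2+a_{k_1}+a_{k_2}=0$ is more properly credited to Lemma~\ref{ble2} (which is what the paper itself cites here) rather than Lemma~\ref{le2}(4); the stated proof of Lemma~\ref{le2}(4) handles the case $l\notin[i]$, whereas here both pairs live in the same $T_1$-eigenspace $V_{b_{\bar k}}$, which is exactly the situation Lemma~\ref{ble2} addresses via the within-eigenspace Cartan identity. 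The conclusion you rely on is correct either way.
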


\begin{proof} We assume otherwise that the ${S}_{(k_1)}(\varepsilon)$ has at least three distinct pairs for some $\varepsilon$, say,
$$
{S}_{(k_1)}(\varepsilon)=\{(a_{k_1},b_{k_1}),(a_{j_1},b_{j_1}),\cdots,(a_{j_t},b_{j_t})\}$$
for some $t\geq 2$, where $b_{j_1} < b_{j_2}<\cdots<b_{j_t}$. Our argument is again to show that there is no
way to compare $b_{k_1}$ with the rest $b_{j_1}, b_{j_2}, \cdots, b_{j_t}$.

First we want to show that $b_{k_1}$ cannot be in between the rest. Assume otherwise that for some $l=1, 2, \cdots, t-1$
$$
b_{j_l} < b_{k_1} < b_{j_{l+1}}.
$$
From Lemma \ref{le31}, we know that
$$
b_{j_l}b_{k_1} + a_{j_l} + a_{k_1}\geq 0 \text{ and } b_{j_{l+1}}b_{k_1} + a_{j_{l+1}} + a_{k_1}\geq 0.
$$
And, from Lemma \ref{ble2}, we know that $b_{k_1}^2 + a_{k_1} + a_{k_2} = 0.$ Hence, on one hand,
\begin{equation*}
\begin{split}
0 \leq &b_{j_{l+1}}b_{k_1} + a_{j_{l+1}} + a_{k_1} - (b_{k_1}^2 + a_{k_1} + a_{k_2} )\\
&= (b_{j_{l+1}} - b_{k_1})(b_{k_1} + \frac{a_{j_{l+1}} - a_{k_2}}{b_{j_{l+1}} - b_{k_2}})= (b_{j_{l+1}} - b_{k_1})(b_{\bar k} + \varepsilon + \frac {a_{k_1} - a_{k_2}}{b_{j_{l+1}} - b_{\bar k}}),
\end{split}
\end{equation*}
which implies
$$
b_{\bar k} + \varepsilon > b_{\bar k} + \varepsilon + \frac {a_{k_1} - a_{k_2}}{b_{j_{l+1}} - b_{\bar k}} \geq 0.
$$
On the other hand,
$$
0 \leq b_{j_{l}}b_{k_1} + a_{j_{l}} + a_{k_1} - (b_{k_1}^2 + a_{k_1} + a_{k_2} ) = (b_{j_{l}} - b_{k_1})(b_{k_1} + \frac{a_{j_{l}} - a_{k_2}}{b_{j_{l}} - b_{k_2}}),
$$
which implies
$$
b_{\bar k} + \varepsilon < b_{\bar k} + \varepsilon + \frac {a_{k_1} - a_{k_2}}{b_{j_{l}} - b_{\bar k}} \leq 0,
$$
which is a contradiction.

Next we want to show that $b_{k_1}$ cannot be smaller than all the rest. Assume otherwise
$$
b_{k_1} < b_{j_1} < b_{j_2} < \cdots < b_{j_t}.
$$
From Lemma \ref{le5}, we know that $b_{\bar{k}}+\varepsilon<0$. But, on the other hand, applying Lemma \ref{le31}, we know that
$$
0 \leq b_{k_1}b_{j_1} + a_{k_1} + a_{j_1}  - (b_{k_1}^2 + a_{k_1} + a_{k_2} ) = (b_{j_1} - b_{k_1})(b_{k_1} + \frac{a_{j_{1}} - a_{k_2}}{b_{j_{1}} - b_{k_2}}),
$$
which implies
$$
b_{\bar k} + \varepsilon > b_{\bar k} + \varepsilon + \frac {a_{k_1} - a_{k_2}}{b_{j_{1}} - b_{\bar k}} \geq 0,
$$
which is a contradiction. One may find similarly $b_{\bar k}$ cannot be larger than the rest. Thus the proof is complete.
\end{proof}

Now we are ready to solve the cases when $s>r$.

\begin{Theorem}\label{case1} Suppose that $T_1$ and $T_2$ are two commuting isoparametric tensors on a Riemannian manifold $(M^n, \ g)$
that satisfy the condition \eqref{gauss1}. And suppose that there exists an eigenspace $V_{b_{\bar{k}}}$ of $T_1$
such that $T_2|_{V_{b_{\bar{k}}}}$ has two distinct eigenvalues $a_{k_1} < a_{k_2}$. Then
\begin{equation}\label{neg}
b_{\bar{k}}^2+2a_{k_1}<0
\end{equation}
and the Riemannian manifold $(M^n,g)$ is locally reducible, that is, $(M^n,g)=(M_1,g_1)\times (M_2,g_2)$ locally. Moreover
$$T_1|_{TM_1}=b_{\bar{k}}g_1,~~T_2|_{TM_1}=a_{k_1}g_1 \text{ and } T_2|_{TM_2}=-b_{\bar{k}} T_1|_{TM_2}-a_{k_1}g_2.$$
\end{Theorem}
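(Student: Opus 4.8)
The plan is to exhibit the splitting asserted in the theorem by showing that the eigendistribution $D_1=\mathrm{Span}\{E_i\mid i\in(k_1)\}$ and its orthogonal complement $D_2=\mathrm{Span}\{E_j\mid j\notin(k_1)\}$ are both parallel; a local de~Rham splitting then produces $(M^n,g)=(M_1,g_1)\times(M_2,g_2)$ with $TM_1=D_1$, $TM_2=D_2$, and all remaining assertions are read off from the eigenvalue data. Thus the whole argument reduces to proving $\omega_{ij}=0$ whenever $i\in(k_1)$ and $j\notin(k_1)$. The inequality \eqref{neg} is immediate beforehand: Lemma \ref{ble2} applied to the splitting \eqref{split-k} gives $b_{\bar k}^{2}+a_{k_1}+a_{k_2}=0$, so $b_{\bar k}^{2}+2a_{k_1}=(b_{\bar k}^{2}+a_{k_1}+a_{k_2})+(a_{k_1}-a_{k_2})=a_{k_1}-a_{k_2}<0$.

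To kill the connection forms I would split into two cases. First, if $j\notin[\bar k]$, then $[i]\neq[j]$ and $\omega_{ij}=\sum_m\frac{T_{ij,m}}{b_i-b_j}\omega_m$, so it suffices to show $T_{ij,m}=0$ for every $m$. When $[m]=[\bar k]$ or $[m]=[j]$ this is forced by the isoparametricity of $T_1$ together with the total symmetry of $\nabla T_1$ (since $T_1$ is Codazzi). When $i,j,m$ carry three pairwise distinct $T_1$-eigenvalues, I argue by contradiction: if $T_{ij,m}\neq0$, then applying the identity $\hat T_{pq,m}=\frac{a_p-a_q}{b_p-b_q}T_{pq,m}$, valid for $[p]\neq[q]$ (the analogue of \eqref{aw1}), to the pairs $(i,j)$ and $(i,m)$, and using $T_{ij,m}=T_{im,j}$ and $\hat T_{ij,m}=\hat T_{im,j}$ (total symmetry of $\nabla T_1$ and $\nabla T_2$), one gets $\frac{a_i-a_j}{b_i-b_j}=\frac{a_i-a_m}{b_i-b_m}=:\varepsilon$; hence $(a_j,b_j)$ and $(a_m,b_m)$, distinct from each other and from $(a_{k_1},b_{k_1})$, all lie in $S_{(k_1)}(\varepsilon)$, contradicting Lemma \ref{case1-1}.

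Second, if $j\in(k_2)$, then $[i]=[j]=[\bar k]$ while $a_i\neq a_j$, so $\omega_{ij}=\sum_m\frac{\hat T_{ij,m}}{a_i-a_j}\omega_m$ and it suffices to show $\hat T_{ij,m}=0$ for every $m$. If $m\in[\bar k]$, then $a_m\in\{a_{k_1},a_{k_2}\}$, so $\hat T_{ij,m}=0$ by the isoparametricity of $T_2$ and the total symmetry of $\nabla T_2$; if $m\notin[\bar k]$, then $[i]\neq[m]$ and $\hat T_{ij,m}=\hat T_{im,j}=\frac{a_i-a_m}{b_i-b_m}T_{im,j}$, while $T_{im,j}=T_{ij,m}=0$ because $[i]=[j]$. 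Thus $\omega_{ij}=0$ for all $i\in(k_1)$, $j\notin(k_1)$, so $D_1$ and $D_2$ are parallel, hence integrable, and the local de~Rham theorem yields $(M^n,g)=(M_1,g_1)\times(M_2,g_2)$. On $TM_1$ every index satisfies $b_i=b_{\bar k}$ and $a_i=a_{k_1}$, giving $T_1|_{TM_1}=b_{\bar k}g_1$ and $T_2|_{TM_1}=a_{k_1}g_1$. Since the metric is a Riemannian product, $R_{ijij}=0$ for $i\in D_1$ and $j\in D_2$, so \eqref{gauss1} gives $0=b_{\bar k}b_j+a_{k_1}+a_j$, i.e. $a_j=-b_{\bar k}b_j-a_{k_1}$ for all $j\notin(k_1)$, which is exactly $T_2|_{TM_2}=-b_{\bar k}T_1|_{TM_2}-a_{k_1}g_2$.

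I expect the only genuine difficulty to be the case analysis in the two middle paragraphs: for each configuration of the indices $i,j,m$, one must decide which of the Codazzi symmetry, the isoparametric relations, the relation between $\nabla T_1$ and $\nabla T_2$, and Lemma \ref{case1-1} is the one forcing the relevant component of $\nabla T_1$ or $\nabla T_2$ to vanish. Once every $\omega_{ij}$ has been eliminated, the de~Rham splitting and the reading off of the eigenvalues are routine.
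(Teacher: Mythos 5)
Your proof is correct, and it rests on the same two inputs as the paper's---Lemma \ref{ble2} (which gives $b_{\bar k}^2+a_{k_1}+a_{k_2}=0$, whence \eqref{neg} is immediate) and Lemma \ref{case1-1} (that every $S_{(k_1)}(\varepsilon)$ has at most two pairs)---but reaches the vanishing of the connection forms by a noticeably shorter route. The paper first invokes Lemma \ref{le2}(4) to obtain $R_{ijij}=0$ for $i\in(k_1)$, $j\notin(k_1)$, and then runs a bootstrap induction on the Cartan-type curvature identity \eqref{curv0}, moving outward from $[\overline{k+1}]$ in both directions, to conclude $T_{ij,m}=0$ for all $m$. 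You instead argue directly: if $T_{ij,m}\neq 0$ for $i\in(k_1)$, $j\notin[\bar k]$, $m\notin[\bar k]\cup[j]$, then the third relation in \eqref{cotensor} together with the total symmetry of $\nabla T_1,\nabla T_2$ forces $\frac{a_i-a_j}{b_i-b_j}=\frac{a_i-a_m}{b_i-b_m}$, so that $(a_{k_1},b_{\bar k})$, $(a_j,b_j)$, $(a_m,b_m)$ are three distinct pairs in a single $S_{(k_1)}(\varepsilon)$, contradicting Lemma \ref{case1-1}. This is in fact precisely the mechanism hidden inside the paper's own proof of Lemma \ref{le2}(4), so you have effectively inlined it and skipped the curvature/bootstrap detour entirely. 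Your treatment of the remaining index configurations (via Codazzi symmetry and the isoparametric relations \eqref{cotensor}), the case $j\in(k_2)$ using \eqref{aa1}-style reasoning, and the subsequent reading-off of $T_1|_{TM_1}$, $T_2|_{TM_1}$, and $T_2|_{TM_2}$ from \eqref{gauss1} after the de Rham splitting are all sound. As a small aside, you also explicitly derive \eqref{neg}, a point the paper's proof does not spell out.
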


\begin{proof} In the light of Lemma \ref{case1-1} we know each set $\hat{S}_{(k_1)}(\varepsilon)$ has at most two pairs, where $[\bar k] = (k_1)\cup(k_2)$.
Then, from  Lemma \ref{le2}, we know
$$
R_{ijij} = b_{\bar k}b_j + a_{k_1} + a_j =0 \text{ for all } i\in (k_1) \text{ and } j\notin (k_1)
$$
Therefore, for $j\notin [\bar k]$, we calculate from
$$
0 = b_{\bar{k}}b_j+a_{k_1}+a_j-(b_{\bar{k}}^2+a_{k_1}+a_{k_2})=(b_j-b_{\bar{k}})(b_{\bar{k}}+\frac{a_j-a_{k_2}}{b_j-b_{\bar{k}}}).
$$
that
$$
\frac{a_j-a_{k_2}}{b_j-b_{\bar{k}}}=-b_{\bar{k}}.
$$
Thus each pair $(a_j, b_j)$ for any $j\notin (k_1)$ falls in ${S}_{(k_2)}(-b_{\bar{k}})$ and satisfies
$$
a_j=-b_{\bar{k}}b_j-a_{k_1}.
$$

To finish the proof we only need to verify that both the distribution $V_{a_{k_1}} = span\{E_i| \ i\in (k_1)\}$ and its orthogonal compliment $V^\perp
= span\{E_i| \ i\notin (k_1)\}$ are integrable and parallel. According to \cite{sn}, that amounts to show that $\omega_{ij}=0$ for all $i\in (k_1)$ and $j\notin (k_1)$.
We first verify that, for $i\in (k_1)$ and $j\in (k_2)$, $\omega_{ij}=0$. This is because, using \eqref{aa1}, one only needs to see that
$\hat T_{ij,l} = 0$ when $l\in [\bar k] = (k_1)\cup(k_2)$ from the second equation in \eqref{cotensor}.
We then claim $\hat{T}_{ij,m}=0$ for all $i\in (k_1), j\notin [\bar k[, m = 1, 2, \cdots, n$. We are proving this claim by repeatedly using the Cartan identity \eqref{tensor2}
and \eqref{abco1} in Lemma \ref{le2}
\begin{equation}\label{curv0}
0=R_{ijij}=\sum_{m\notin [\bar{k}],[j]}\frac{2T^2_{ij,m}}{(b_m-b_{\bar{k}})(b_m-b_j)},~~when~~i\in (k_1)\subset [\bar{k}],~~j\notin [\bar{k}].
\end{equation}
First, let $j\in [\overline{k+1}]$ in (\ref{curv0}), and we note that $$(b_m-b_{\bar{k}})(b_m-b_j)>0,~~ when ~~m\notin [\bar{k}],[\overline{k+1}],$$
which forces
\begin{equation}\label{curv1}
T_{ij,m}=0,~~when~~i\in (k_1),~j\in [\overline{k+1}],~~1\leq m\leq n.
\end{equation}
Then let $j\in [\overline{k+2}]$ in (\ref{curv0}) and obtain
\begin{equation*}
0 =\sum_{m\notin [\bar{k}],[\overline{k+2}]}\frac{2T^2_{ij,m}}{(b_m-b_{\bar{k}})(b_m-b_{\overline{k+2}})}
=\sum_{m\notin [\bar{k}],[\overline{k+1}],[\overline{k+2}]}\frac{2T^2_{ij,m}}{(b_m-b_{\bar{k}})(b_m-b_{\overline{k+2}})}
\end{equation*}
due to \eqref{curv1}, which in turn improves \eqref{curv1} into
$$
T_{ij,m}=0,~~when~~i\in (k_1),~j\in [\overline{k+1}]\cup[\overline{k+2}],~~1\leq m\leq n.
$$
Repeatedly extending in both directions we can prove the claim $T_{ij,m}=0$ for all $i\in (k_1),~j\notin [\bar{k}]$ and all $1\leq m\leq n.$
Therefore, from (\ref{tensor1}), $\omega_{ij}=0$ for all $i\in (k_1)$ and $j\notin [\bar{k}]$. So the proof is complete.
\end{proof}

The rest cases are when $r=s$ but $S_{(\bar 1)}(\varepsilon_1)$ has two pairs. Let
\begin{equation}\label{=2}
S_{(\bar 1)}(\varepsilon_1) = \{(a_{\bar 1}, b_{\bar 1}), (a_{\bar k}, b_{\bar k})\}.
\end{equation}
The patterns of correlations among all the pairs are exactly the same as what they
are in the cases of Theorem \ref{case1}, that is, every $S_{(\bar k)}(\varepsilon)$ has at most two pairs and therefore all pairs except $(a_{\bar k}, b_{\bar k})$
lie in one line.

\begin{Theorem}\label{case3} Suppose that $T_1$ and $T_2$ are two commuting isoparametric tensors on a Riemannian manifold
$(M^n, \ g)$ and satisfy the relation \eqref{gauss1}. And suppose that $r=s> 2$ and \eqref{=2} holds. Then
\begin{equation}\label{neg}
b_{\bar k}^2+2a_{\bar k}<0
\end{equation}
and the Riemannian manifold $(M^n,g)$ is locally reducible, that is, $(M^n,g)=(M_1,g_1)\times (M_2,g_2)$ locally. Moreover
$$
T_1|_{TM_1}=b_{\bar k}g_1,~~T_2|_{TM_1}=a_{\bar k}g_1, ~ \text{ and } T_2|_{TM_2}=-b_{\bar k} T_1|_{TM_2}-a_{\bar k}g_2.
$$
\end{Theorem}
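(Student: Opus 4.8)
The plan is to follow, step for step, the proof of Theorem~\ref{case1}. Since $r=s$, every index set $(i)$ coincides with $[i]$, so $[\bar k]$ is a genuine common eigenspace of $T_1$ and $T_2$, and it is this $[\bar k]$ that will become $M_1$ (whereas in Theorem~\ref{case1} one split off the proper $T_2$-eigenspace $(k_1)$ sitting inside $V_{b_{\bar k}}$). The first ingredient is the flat-plane relation obtained by feeding the hypothesis \eqref{=2} into part~(4) of Lemma~\ref{le2}: for $k\in(\bar 1)$ and $l\in(\bar k)$ one gets $R_{klkl}=b_{\bar 1}b_{\bar k}+a_{\bar 1}+a_{\bar k}=0$. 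This plays the role that $b_{\bar k}^2+a_{k_1}+a_{k_2}=0$ played in Theorem~\ref{case1}.

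Next I would prove the analogue of Lemma~\ref{case1-1}: every set $S_{([\bar k])}(\varepsilon)$ has at most two pairs. Arguing by contradiction, suppose $S_{([\bar k])}(\varepsilon)$ has at least three distinct pairs; first observe $\varepsilon\neq\varepsilon_1$, for otherwise the extra member of $S_{([\bar k])}(\varepsilon_1)$, being collinear with $(a_{\bar 1},b_{\bar 1})$ and $(a_{\bar k},b_{\bar k})$, would produce a third pair in $S_{(\bar 1)}(\varepsilon_1)$ and contradict \eqref{=2}; in particular $b_{\bar 1}$ is strictly smaller than every $b$-value occurring in $S_{([\bar k])}(\varepsilon)$. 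One then rules out, case by case, that $b_{\bar k}$ is strictly between, strictly below all of, or strictly above all of the remaining $b$-values, in each case combining the sign information on the relevant sectional curvatures supplied by Lemmas~\ref{le31} and \ref{le5} with the flat-plane relation. The clean computational fact driving this is that for any $(a_j,b_j)$ on the line $S_{([\bar k])}(\varepsilon)$ the flat-plane relation forces $R_{\bar 1 j}=(b_{\bar 1}+\varepsilon)(b_j-b_{\bar k})$, while $b_{\bar 1}+\varepsilon<0$ by Lemma~\ref{le5}; feeding this, together with the generalized Cartan identity at the globally smallest eigenvalue $b_{\bar 1}$, into the case analysis yields contradictions exactly as in Lemma~\ref{case1-1}. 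I expect this to be the delicate step, precisely because the flat-plane relation now reads $b_{\bar 1}b_{\bar k}+\cdots$ rather than having the ``diagonal'' shape $b_{\bar k}^2+\cdots$, so the cancellations are less automatic.

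Granting this, Lemma~\ref{le2}(4), applied to $S_{([\bar k])}(\varepsilon_j)$ with $\varepsilon_j$ the slope from $(a_{\bar k},b_{\bar k})$ to $(a_j,b_j)$, gives $R_{ijij}=b_{\bar k}b_j+a_{\bar k}+a_j=0$ for every $i\in[\bar k]$ and every $j\notin[\bar k]$; hence $a_j=-b_{\bar k}b_j-a_{\bar k}$ for all such $j$, so all pairs except $(a_{\bar k},b_{\bar k})$ lie on the single line $a=-b_{\bar k}b-a_{\bar k}$, and (since $r=s>2$) there are at least two of them. To get \eqref{neg}: when $r=s\geq 4$ this line, regarded as the set $S_{(i)}(-b_{\bar k})$ for a base index $i$ lying on it, carries $s-1\geq 3$ pairs, and Lemma~\ref{le4} applied to it gives $(-b_{\bar k})^2-2(-a_{\bar k})<0$, i.e. $b_{\bar k}^2+2a_{\bar k}<0$. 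When $r=s=3$ the flat-plane relation propagates through the Cartan identity \eqref{cur1} and the total symmetry of $(T_1)_{ij,k}$ in its three indices to force $R=0$ on every coordinate plane, so $(M^n,g)$ is locally flat; the resulting three relations $b_{\bar m}b_{\bar l}+a_{\bar m}+a_{\bar l}=0$ then express $b_{\bar k}^2+2a_{\bar k}$ as the product of its differences with the other two $b$-values, and this product is negative because $\bar k$ must be the middle eigenvalue (for the two extreme eigenvalues $\bar m$ one has $b_{\bar m}^2+2a_{\bar m}>0$, which through the slope ordering at $(a_{\bar 1},b_{\bar 1})$ prevents them from being $\bar k$).

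Finally, to produce the splitting I would verify that the orthogonal distributions $\mathrm{span}\{E_i\mid i\in[\bar k]\}$ and $\mathrm{span}\{E_i\mid i\notin[\bar k]\}$ are parallel; by \cite{sn} this yields $(M^n,g)=(M_1,g_1)\times(M_2,g_2)$ with $TM_1$ the $b_{\bar k}$-eigenspace. By \eqref{tensor1} it suffices to show $(T_1)_{ij,m}=0$ whenever $i\in[\bar k]$, $j\notin[\bar k]$ and $1\leq m\leq n$, which is the same bootstrapping argument as at the end of the proof of Theorem~\ref{case1}: from $R_{ijij}=0$ and the Cartan identity \eqref{cur1} one has $0=\sum_{m\notin[\bar k],[j]}2(T_1)_{ij,m}^2/\bigl((b_m-b_{\bar k})(b_m-b_j)\bigr)$; choosing $j$ in the eigenspace adjacent to $[\bar k]$ makes all summands have a common sign and hence vanish, and one propagates outward one eigenvalue at a time. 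The stated tensor restrictions are then immediate: $T_1|_{TM_1}=b_{\bar k}g_1$ and $T_2|_{TM_1}=a_{\bar k}g_1$ because $r=s$, while $T_2|_{TM_2}=-b_{\bar k}T_1|_{TM_2}-a_{\bar k}g_2$ because $a_j=-b_{\bar k}b_j-a_{\bar k}$ for $j\notin[\bar k]$.
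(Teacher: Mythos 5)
Your overall plan is the same as the paper's: first prove that each set $S_{(\bar k)}(\varepsilon)$ with $\varepsilon\neq\varepsilon_1$ has at most two pairs, then conclude via Lemma~\ref{le2}(4) that all pairs except $(a_{\bar k},b_{\bar k})$ lie on the line $a=-b_{\bar k}b-a_{\bar k}$, then establish \eqref{neg}, and finally run the bootstrapping argument to get $\omega_{ij}=0$ for $i\in[\bar k]$, $j\notin[\bar k]$. Your last two steps are correct, and your route to \eqref{neg} (Lemma~\ref{le4} for $s\geq 4$, an explicit flat computation for $s=3$) is a valid alternative, though the paper does this more efficiently in one stroke: minimality of $\varepsilon_1$ among slopes at $(\bar 1)$ with at least two pairs gives $-b_{\bar k}>\varepsilon_1$, and the flat-plane relation turns $b_{\bar k}+\varepsilon_1<0$ directly into $\frac{b_{\bar k}^2+2a_{\bar k}}{b_{\bar k}-b_{\bar 1}}<0$.

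The genuine gap is in the key lemma. You propose a three-way case analysis on whether $b_{\bar k}$ is between, below, or above the other $b$-values in $S_{(\bar k)}(\varepsilon)$, and assert it closes ``exactly as in Lemma~\ref{case1-1}.'' But the Lemma~\ref{case1-1} argument relies on the diagonal flat-plane relation $b_{\bar k}^2+a_{k_1}+a_{k_2}=0$: subtracting it from $b_{j_l}b_{k_1}+a_{j_l}+a_{k_1}$ produces the factor $b_{j_l}-b_{k_1}$, so one extracts $b_{\bar k}+\varepsilon$ up to a controllable correction. Here the relation is the off-diagonal $b_{\bar 1}b_{\bar k}+a_{\bar 1}+a_{\bar k}=0$, and subtracting it from $b_{j_l}b_{\bar k}+a_{j_l}+a_{\bar k}$ produces the factor $b_{j_l}-b_{\bar 1}$ and the slope $\frac{a_{j_l}-a_{\bar 1}}{b_{j_l}-b_{\bar 1}}$, which is not $\varepsilon$ and is not controlled by the hypotheses. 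Your ``clean computational fact'' $R_{\bar 1 j\bar 1 j}=(b_{\bar 1}+\varepsilon)(b_j-b_{\bar k})$ is correct, but plugging it into the Cartan identity at $(\bar 1)$ does not help: the sum in \eqref{tensor4} also runs over indices $j$ \emph{not} on $S_{(\bar k)}(\varepsilon)$, whose sign you do not control, and you do not explain how the three cases (especially ``between'') are eliminated. What you are missing is the paper's auxiliary function $d_j=a_j-\varepsilon_1 b_j$: since $\varepsilon_1$ is minimal, $d_{\bar 1}=\min_j d_j$, and since $(\bar k)\in S_{(\bar 1)}(\varepsilon_1)$ also $d_{\bar k}=d_{\bar 1}$; then for $j$ on $S_{(\bar k)}(\varepsilon)\setminus\{(\bar k)\}$ one has $0<d_j-d_{\bar k}=(b_j-b_{\bar k})(\varepsilon-\varepsilon_1)$, which \emph{forces} $b_{\bar k}$ to be extremal (smallest if $\varepsilon>\varepsilon_1$, largest if $\varepsilon<\varepsilon_1$), so the ``between'' case is impossible from the start and each of the remaining two cases collapses by a single subtraction against the flat-plane relation. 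Without this minimality input your sketched case analysis does not go through, and this is precisely the step you flagged as ``delicate,'' so the flag was warranted: you need the $d_j$ mechanism or an equivalent replacement for it.
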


\begin{proof} We claim that the set $S_{(\bar k)}(\varepsilon)$ has at most two pairs. One only needs to prove this claim for $\varepsilon
\neq \varepsilon_1$. Assume otherwise $S_{(\bar k)}(\varepsilon)$ has at least three pairs, say,
$$
S_{(\bar k)}(\varepsilon)=\{(a_{\bar k},b_{\bar k}),(a_{i_1},b_{i_1}),\cdots,(a_{i_h},b_{i_h})\}
$$
and $b_{i_1}<b_{i_2}<\cdots<b_{i_h}$, for some $\varepsilon\neq\varepsilon_1$ and $h\geq 2$.

Let $d_j= a_j - \varepsilon_1b_j$ for $j =1, 2, \cdots, n$. Then it is clear that
$d_j = d_1$ when $j\in[\bar 1]\cup[\bar k]$. On the other hand, if $j\notin [\bar 1]\cup[\bar k]$, then
$$
d_1-d_j=(b_1-b_j)(-\varepsilon_1+\frac{a_1-a_j}{b_1-b_j}) <0.
$$
Because $\varepsilon_1$ is the smallest slope among all lines passing through $(a_1, b_1)$. Therefore
\begin{equation}\label{db1}
d_1=min\{d_1,d_2,\cdots,d_n\}.
\end{equation}
Now,  for any $i\in[\bar k]$ and $j\in[i_1]\cup[i_2]\cdots\cup[i_h]$, we have
\begin{equation}\label{dd3}
0 < d_j-d_i=(b_j-b_{\bar k})(\varepsilon-\varepsilon_1).
\end{equation}
Let us assume $\varepsilon > \varepsilon_1$ first, which immediately implies $b_{\bar k}<b_{i_1}<\cdots<b_{i_h}$ and hence $b_{\bar k} + \varepsilon <0$
from Lemma \ref{le5}. We then calculate
$$
0 >b_{\bar k} +\varepsilon > b_{\bar k} +\varepsilon_1 = b_{\bar k} + \frac{a_{\bar k} - a_{\bar 1}}{b_{\bar k} - b_{\bar 1}} = \frac {b_{\bar k}^2 +2a_{\bar k}}{b_{\bar k}-b_{\bar 1}},
$$
where we used
\begin{equation*}
b_{\bar 1}b_{\bar k} + a_{\bar 1} + a_{\bar k} =0
\end{equation*}
from Lemma \ref{le2}. Therefore $b_{\bar k}^2 +2a_{\bar k} <0$.
Meanwhile, applying Lemma \ref{le31} to $S_{(\bar k)}(\varepsilon)$, we have,
$$
b_{\bar k}b_{i_1}+a_{\bar k}+a_{i_1}\geq 0.
$$
Then
$$
0<b_{\bar k}b_{i_1}+a_{\bar k}+a_{i_1}-(b_{\bar k}^2+2a_{\bar k})=(b_{i_1}-b_{\bar k})(b_{\bar k}+\varepsilon),$$
which implies that $b_{\bar k}+\varepsilon> 0$. This is a contradiction and concludes that $\varepsilon < \varepsilon_1$.

From (\ref{dd3}), when $\varepsilon < \varepsilon_1$, we have $b_{i_1}<b_{i_2}<\cdots<b_{i_h}<b_{\bar k}$ and hence $b_{\bar k}+ \varepsilon > 0$ from
Lemma \ref{le5}. Similarly, applying Lemma \ref{le31} to $S_{(\bar k)}(\varepsilon)$ again, we have
$$
b_{\bar k}b_{i_1}+ a_{\bar k}+a_{i_1} \leq 0.
$$
Then
$$
0 \geq b_{\bar k}b_{i_1}+ a_{\bar k}+a_{i_1}  -(b_{\bar k}b_{\bar 1}+a_{\bar k}+a_{\bar 1})=(b_{i_1}-b_{\bar 1})(b_{\bar k}+ \frac{a_{i_1} - a_1}{b_{i_1} - b_1}),
$$
which implies that $b_{\bar k}+\varepsilon_1\leq0$ and therefore $b_{\bar k}+\varepsilon < b_{\bar k}+\varepsilon_1 \leq0$.
This is a contradiction again and concludes that no $S_{(\bar k)}(\varepsilon)$ has more than two pairs.

Consequently, as in the proof of Theorem \ref{case1}, we know $S_{(\bar 1)}(-b_{\bar k})$ contains all pairs except $(a_{\bar k}, b_{\bar k})$ and $
a_j = - b_{\bar k}b_j - a_{\bar k}$ for $j\notin[\bar k]$. By the minimality of $\varepsilon_1$, we find that $b_{\bar k}+\varepsilon_1 < 0$, which implies
$$
0 > b_{\bar k}+\varepsilon_1=b_{\bar k}+\frac{a_{\bar k}-a_1}{b_{\bar k }-b_1}=\frac{b_{\bar k}^2+2a_{\bar k}}{b_{\bar k}-b_1}
$$
and therefore
$$
b_{\bar k}^2+2a_{\bar k}<0.
$$

Finally, by a bootstrapping argument similar to that in the proof of Theorem \ref{case1}, repeatedly using the Cartan identity \eqref{tensor2}
and \eqref{abco1} in Lemma \ref{le2}, we can show that
$$
T_{ij,m}=0,~~when~~i\in[\bar k],~1\leq j,m\leq n,
$$
which implies that
\begin{equation}\label{cura1}
\omega_{ij}=0,~~when~~i\in [\bar k], ~j\notin [\bar k].
\end{equation}
Thus both the distribution $V_{b_{\bar k}}$ and its orthogonal complement are integrable and parallel according to \cite{sn}. The proof is completed
\end{proof}


\section{Laguerre invariants of hypersurfaces in $\mathbb{R}^{n+1}$}\label{lag-inv}

In this section we first recall Laguerre invariants of hypersurfaces in $\mathbb{R}^{n+1}$. For the details we refer readers to
\cite{lit1,lit2}. We then present the characterization of Dupin hypersurfaces with constant Laguerre curvatures in terms of
Laguerre invariants.

The group of Laguerre transformations is not as well known as the group of M\"{o}bius transformations. It is the other important subgroup of
the group of Lie sphere transformations. Let us first introduce the group of Lie sphere transformations (cf. \cite{cecil3}).  It starts with the space of all oriented
hyperspheres in $\mathbb{R}^{n+1}$, which are, points, oriented n-spheres, and oriented hyperplanes in $\mathbb{R}^{n+1}$. One may use the so-called
Lie quadric $Q_{n+1}$ to represent the space of all oriented hyperspheres. The Lie quadric is the projectivized light cone $C^{n+3}$ in the Minkowski spacetime
$\mathbb{R}^{n+4}_2$, where the Minkowski spacetime $\mathbb{R}^{n+4}_2$ is the vector space $R^{n+4}$
equipped with the quadratic
$$
<x, y> = - x_1y_1 + x_2y_2 + \cdots + x_{n+3}y_{n+3} - x_{n+4} y_{n+4}
$$
and the light cone is given as
$$
C^{n+3} = \{x\in \mathbb{R}^{n+4}| <x, x> = 0\}.
$$
The group of Lie sphere transformation is the orthogonal group $O(n+2, 2)/\{\pm1\}$ of the Minkowski spacetime $\mathbb{R}^{n+4}_2$. And the group
of Laguerre transformations is the isotropy subgroup of $O(n+2, 2)/\{\pm 1\}$ at $\mathfrak{p} = (1, -1, \vec{0}, 0)\in C^{n+3}\subset\mathbb{R}^{n+4}_2$.

A more geometric way to introduce the group of Lie sphere transformations is to consider the unit tangent bundle $U\mathbb{R}^{n+1}$ over $\mathbb{R}^{n+1}$, which
represents the space of lines on the Lie quadric $Q_{n+1}$.
It is clear that
$$
U\mathbb{R}^{n+1} = \mathbb{R}^{n+1}\times \mathbb{S}^n = \{(x,\xi)| x\in \mathbb{R}^{n+1}, \xi\in \mathbb{S}^n\}\subset \mathbb{C}^{n+1}$$
and there is a standard contact structure on $U\mathbb{R}^{n+1}$ defined by the standard contact form
$$
\omega=dx\cdot\xi.
$$
We then recall that oriented hypershperes in $U\mathbb{R}^{n+1}$ are the following three types:
\begin{itemize}
\item oriented n-sphere $S(p,r)=\{(x,\xi)\in U\mathbb{R}^{n+1}| x-p=r\xi\}$ for a point $p\in \mathbb{R}^{n+1}$ and a nonzero real number $r$
\item point sphere $S(p, 0) = \{(p, \xi)\ U\mathbb{R}^{n+1}| \xi\in UT_p\mathbb{R}^{n+1}\} $ for a point $p\in \mathbb{R}^{n+1}$, a "sphere" of radius $0$,
\item oriented hyperplane $P(\xi, \lambda) = \{ (x, \xi)\in U\mathbb{R}^{n+1}| x\cdot \xi = \lambda\}$ for a fixed unit vector $\xi$ and a real number $\lambda$, a "sphere"
of infinite radius.
\end{itemize}
It turns out that the group of Lie sphere transformations is also the group of diffeomorphisms of $U\mathbb{R}^{n+1}$ that take oriented hyperspheres to
oriented hyperspheres and preserve the contact structure $\omega$. Particularly, a Laguerre transformation is a Lie sphere transformation
that takes oriented spheres to oriented spheres and takes oriented hyperplanes to oriented hyperplanes.

Let $x: M^n\to\mathbb{R}^{n+1}$ be an oriented hypersurface in $\mathbb{R}^{n+1}$ with
non-vanishing principal curvatures. Then the unit normal $\xi: M^n \to \mathbb{S}^n$
is an immersion and $x$ induces a Laguerre surface
$f=(x,\xi): M^n\to U\mathbb{R}^{n+1}$.
Let $x$ and ${\tilde x}$ be two oriented hypersurfaces in $\mathbb{R}^{n+1}$ with non-vanishing principal curvatures. We say $x$ and $\tilde x$ are
Laguerre equivalent, if there is a Laguerre transformation $\phi: U\mathbb{R}^{n+1}\to U\mathbb{R}^{n+1}$
such that $(\tilde x, \tilde \xi)=\phi\circ (x, \xi)$.

Let $x: M^n\to \mathbb{R}^{n+1}$ be an umbilical free hypersurface with non-vanishing
principal curvatures. Let $\{e_1,e_2,\cdots, e_n\}$ be the
orthonormal basis for $TM^n$ with respect to $dx\cdot dx$, consisting
of unit principal vectors. We write the structure equation of $x:
M^n\to \mathbb{R}^{n+1}$ by
$$ e_j(e_i(x))=\sum_{k}\Gamma^k_{ij}e_k(x)+\lambda_i\delta_{ij}\xi;~~
e_i(\xi)=-\lambda_ie_i(x),~~~ 1\le i,j,k\le n,$$ where
$\lambda_i\neq 0$ is the principal curvature corresponding to $e_i$. Let
$$ R_i=\frac{1}{\lambda_i} ~\text{ and }~
R=\frac{R_1+R_2+\cdots+R_{n}}{n}$$ be the curvature radius and
mean curvature radius of $x$.
As in \cite{lit1,lit2}, we call
\begin{equation}\label{laguerre-position}
Y=\rho(x\cdot\xi, -x\cdot\xi,\xi,1): M^n \to C^{n+3}\subset\mathbb{R}^{n+4}_2
\end{equation}
the Laguerre position vector of the hypersurface $x$, where $\rho=\sqrt{\sum_{i=1}^n(R_i-R)^2}$. It is important to realize
the following covariant property.

\begin{Theorem}\label{lag1} (\cite{lit2})
Let $x$ and $\tilde{x}$
 be two umbilical free oriented hypersurfaces in $\mathbb{R}^{n+1}$ with non-vanishing
principal curvatures. Then $x$ and $\tilde{x}$ are Laguerre
equivalent if and only if their Laguerre positions $Y$ and $\tilde Y$ are the same up to a Laguerre transformation.
\end{Theorem}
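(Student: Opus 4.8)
The plan is to prove Theorem \ref{lag1} as a covariance statement for the Laguerre position vector \eqref{laguerre-position}, in the same spirit as the Möbius covariance \eqref{co-var}. Both implications will follow from two facts: (I) a Laguerre transformation $\phi$ of $U\mathbb{R}^{n+1}$ induces an element $T$ of the Laguerre group $\subset O(n+2,2)/\{\pm1\}$ with $\widetilde Y=\sigma\,(T\circ Y)$ for a positive function $\sigma$ — equivalently $[\widetilde Y]=T[Y]$ as maps into the projectivized light cone $Q_{n+1}$ — where $(\tilde x,\tilde\xi)=\phi\circ(x,\xi)$; and (II) the Laguerre surface $f=(x,\xi)$ can be reconstructed from $[Y]$, equivariantly for the two actions. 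Granting (I) and (II): if $\tilde x$ is Laguerre equivalent to $x$ via $\phi$, then (I) gives $[\widetilde Y]=T[Y]$; conversely, if $[\widetilde Y]=T[Y]$ with $T$ in the Laguerre group, let $\phi$ be the induced transformation of $U\mathbb{R}^{n+1}$ and use (II) to conclude $(\tilde x,\tilde\xi)=\phi\circ(x,\xi)$.

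To prove (I), given a Laguerre transformation $\phi$ — equivalently, via the isotropy subgroup of $O(n+2,2)/\{\pm1\}$ at $\mathfrak{p}$, an explicit $T$ — one verifies $[\widetilde Y]=T[Y]$ by decomposing into the basic families generating the Laguerre group: orientation-preserving Euclidean motions $x\mapsto Ax+v$ (with $\xi\mapsto A\xi$), central dilations $x\mapsto cx$, the offset transformations $\mathcal{P}_t\colon (x,\xi)\mapsto(x+t\xi,\xi)$, and the Lorentz boosts of $\mathfrak{p}^{\perp}/\mathfrak{p}\cong\mathbb{R}^{n+2}_1$. For a Euclidean motion the curvature radii $R_i$ and $\rho$ are unchanged while $\xi\mapsto A\xi$ and $x\cdot\xi\mapsto x\cdot\xi+v\cdot\xi$, and one reads off the corresponding isometry of $\mathbb{R}^{n+4}_2$ fixing $\mathfrak{p}$; on $\mathfrak{p}^{\perp}$, where $Y$ takes values, it is linear and explicit. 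For a dilation $R_i\mapsto cR_i$, so $\rho\mapsto c\rho$ and $x\cdot\xi\mapsto c(x\cdot\xi)$ with $\xi$ fixed, and $T$ is the boost scaling the $\mathfrak{p}$-direction (hence fixing $[\mathfrak{p}]$) composed with the scalar $c$. The geometrically essential case is $\mathcal{P}_t$: one checks it preserves the contact form $\omega=dx\cdot\xi$ and sends spheres to spheres and hyperplanes to hyperplanes; from $e_i(x+t\xi)=(1-t\lambda_i)e_i(x)$ one gets $\tilde\lambda_i=\lambda_i/(1-t\lambda_i)$, hence $\tilde R_i=R_i-t$, so $\tilde R_i-\tilde R=R_i-R$, $\widetilde\rho=\rho$, and $\widetilde{x\cdot\xi}=x\cdot\xi+t$, giving $\widetilde Y=Y+t\rho\,\mathfrak{p}$ — exactly the image of $Y$ under the Eichler (null transvection) element of $O(n+2,2)$ determined by $\mathfrak{p}$ and $t$.

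For (II), from a representative $\tfrac1\rho Y=(x\cdot\xi,-x\cdot\xi,\xi,1)$ of $[Y]$ one reads off the unit normal $\xi\colon M^n\to\mathbb{S}^n$ and the support function $h:=x\cdot\xi$. Since the principal curvatures are nowhere zero, $\xi$ is an immersion, so $d\xi_p(T_pM^n)=\xi(p)^{\perp}$; differentiating $h$ gives $dh=x\cdot d\xi$ because $dx\cdot\xi=0$, so for each $p$ the $n+1$ linear conditions $x(p)\cdot\xi(p)=h(p)$ and $x(p)\cdot d\xi_p(\cdot)=dh_p(\cdot)$ determine $x(p)$ uniquely. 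Thus $[Y]$ recovers $(x,\xi)$; geometrically $[Y]$ is the field of oriented tangent hyperplanes of $x$, and $x$ is its envelope. Because a Laguerre transformation $\phi$ carries the oriented tangent hyperplanes of $x$ to those of $\phi\circ x$ and preserves the tangency relation, this reconstruction intertwines the action of $T$ on $Q_{n+1}$ with that of $\phi$ on $U\mathbb{R}^{n+1}$, which is exactly what the converse needs.

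I expect the main obstacle to be step (I): matching each generator of the Laguerre group with the correct element of $O(n+2,2)$ and checking that the normalization $\rho=\sqrt{\sum_i(R_i-R)^2}$ is compatible with it. The delicate points are that $\rho$, not the curvature radius data alone, is offset-invariant — which is precisely why $R_i-R$, rather than $R_i$, enters \eqref{laguerre-position} — and that $Y$ is only covariant as a null-valued map up to a positive scalar function, so that "$Y$ and $\widetilde Y$ the same up to a Laguerre transformation" must be read in the projectivized light cone $Q_{n+1}$; once the generator-by-generator check is in hand, composition yields the general case.
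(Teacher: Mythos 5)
The paper does not prove Theorem~\ref{lag1}: it is quoted from \cite{lit2} without any argument, so there is no in-paper proof to compare your write-up against. The closest in-paper analogue is the M\"obius covariance \eqref{co-var}, which is exactly the template you invoke. I therefore review the proposal on its own terms.

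\noindent\textbf{What works.} The overall strategy --- (I) covariance of the Laguerre position vector under a generating set of the Laguerre group, and (II) recovery of $(x,\xi)$ from the tangent-hyperplane congruence $[Y]$ by an envelope argument, plus equivariance of that recovery --- is the right one and mirrors Lemma~2.1. Your recovery step (II) is correct as stated: $\xi$ and the support function $h=x\cdot\xi$ are read off a normalized representative of $[Y]$, and since all principal curvatures are nonzero, $d\xi$ has full rank, so the linear system $x\cdot\xi=h$, $x\cdot d\xi=dh$ determines $x$ pointwise. Your computation for the offset $\mathcal P_t$ ($\tilde R_i=R_i-t$, $\tilde\rho=\rho$, $\tilde Y=Y+t\rho\,\mathfrak p$, realized by a null transvection fixing $\mathfrak p$) is also correct and is a good model for the remaining cases.

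\noindent\textbf{Two genuine gaps in step (I).} First, you include the central dilations $x\mapsto cx$ in your generating set and therefore only obtain $\tilde Y=\sigma\,TY$ for a nontrivial positive factor $\sigma$; you then conclude the theorem ``must be read in the projectivized light cone.'' This is not what the theorem asserts, and it is inconsistent with the surrounding framework. The Laguerre group here (and in \cite{lit2}) is the isotropy subgroup of $O(n+2,2)/\{\pm1\}$ at the \emph{vector} $\mathfrak p$, not merely at the projective point $[\mathfrak p]$: that is precisely what makes $g=\langle dY,dY\rangle$ a genuine Riemannian metric (used as such throughout \eqref{2.2}--\eqref{2.9}, e.g.\ in the curvature tensor, the Laplacian defining $N$, etc.), rather than a conformal class. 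A central dilation sends $\mathfrak p$ to $c^2\mathfrak p$ (and indeed rescales $g$ by $c^2$), so it is not a Laguerre transformation in the sense required; including it is what forces $\sigma\neq1$. Dropping dilations, the theorem is the exact identity $\tilde Y=TY$, which is what one must prove. Second, the ``Lorentz boosts of $\mathfrak p^\perp/\mathfrak p\cong\mathbb R^{n+2}_1$'' --- the Laguerre transformations whose induced action on the normal Gauss map $\xi:M^n\to\mathbb S^n$ is a noncompact conformal transformation of $\mathbb S^n$ --- are listed but never described on $U\mathbb R^{n+1}$, and their covariance is not checked. This is the genuinely nontrivial case: unlike Euclidean motions and $\mathcal P_t$, these change $\xi$ (hence the curvature radii $R_i$ and the normalization $\rho=\sqrt{\sum(R_i-R)^2}$) nonlinearly, and showing that $\tilde Y$ equals $TY$ on the nose requires a computation at least as substantive as the one you give for $\mathcal P_t$. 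As written, step (I) is incomplete without it, and since every other part of the argument (including the converse direction via (II)) routes through (I), this gap is essential.
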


Let $Y$ the Laguerre position vector of a hypersurface $x: M\to
\mathbb{R}^{n+1}$. We want to build a natural orthogonal moving frame along the surface $Y$ in $\mathbb{R}^{n+4}_2$.
Analogous to the cases of M\"{o}bius geometry,
$$
g=<dY,dY>
$$
is then called the Laguerre metric and the null normal vector
$$ N=\frac{1}{n}\Delta Y+\frac{1}{2n^2} <\Delta Y,\Delta
Y>Y
$$
to the surface $Y$ in $\mathbb{R}^{n+4}_2$ that is paired with the tautological null normal $Y$ such that
$$<Y, N> = -1,$$
where  the Laplacian operator $\Delta$ is that of the Laguerre metric $g$.
In contrast to the cases of M\"{o}bius geometry, we have a constant null normal vector
$\mathfrak{p}$ and a canonical null normal vector
$$
\eta=(\frac{1}{2}(1+|x|^2),\frac{1}{2}(1-|x|^2), x, 0)+R\,(x\cdot\xi, -x\cdot\xi,\xi,1)
$$
such that
$$
<\eta, \mathfrak{p}> = -1 \text{ and }  <\eta, Y>=<\eta, N>=<\mathfrak{p}, Y>=<\mathfrak{p}, N>=0.
$$
Therefore, if let $\{E_1,E_2,\cdots,E_n\}$ be an orthonormal basis for $g=<dY,dY>$ that are tangent to $Y$
with dual basis $\{\omega_1,\omega_2,\cdots,\omega_n\}$, then, as given in \cite{lit1,lit2}, we
have the following orthogonal moving frame along $Y$ in $\mathbb{R}^{n+4}_2$
$$\
\{Y, N, E_1, E_2,\cdots,E_n, \eta,\mathfrak{p}\}.
$$
We next recall from \cite{lit1,lit2} the following structure equations:
\begin{eqnarray}
&&E_i(N)=\sum_jL_{ij}E_j+C_i\mathfrak{p};\label{2.2}\\
&&E_j(E_i)=L_{ij}Y+\delta_{ij}N
+\sum_k\Gamma^k_{ij}E_k+B_{ij}\mathfrak{p};\label{2.3}\\
&&E_i(\eta)=-C_iY+\sum_jB_{ij}E_j.\label{2.4}
\end{eqnarray}
Analogous to the cases of M\"{o}bius geometry, besides the Laguerre metric $g=<dY,dY>$, we have the following Laguerre
invariants:
\begin{equation}\label{laguerre-inv}
\mathbb{B}=\sum_{ij}B_{ij}\omega_i\otimes\omega_j, ~~
\mathbb{L}=\sum_{ij}L_{ij}\omega_i\otimes\omega_j, \text{ and }  \mathbb{C}=\sum_iC_i\omega_i,
\end{equation}
where $\mathbb{B}$ is called the Laguerre second fundamental form, $\mathbb{L}$ is called the Laguerre tensor,  and $\mathbb{C}$ is called the Laguerre form.
In \cite{lit2}, the integrability conditions for $\{\mathbb{L}, \mathbb{B}, \mathbb{C}\}$ are identified as
\begin{eqnarray}
&&L_{ij,k}=L_{ik,j};\label{2.5}\\
&&C_{i,j}-C_{j,i}=\sum_k (B_{ik}L_{kj}-B_{jk}L_{ki});\label{2.6}\\
&&B_{ij,k}-B_{ik,j}=C_j\delta_{ik}-C_k\delta_{ij};\label{2.7}\\
&&R_{ijkl}=L_{jk}\delta_{il}+L_{il}\delta_{jk}-L_{ik}\delta_{jl}-L_{jl}\delta_{ik};\label{2.8}\\
&&\sum_{ij}B_{ij}^2=1, \sum_iB_{ii}=0,\sum_iB_{ij,i}=(n-1)C_j;\label{2.9}.
\end{eqnarray}
where
$R_{ijkl}$ is the curvature tensor of $g$. More importantly, in \cite{lit2}, it was shown that, up to a Laguerre transformation,
an umbilical free oriented hypersurfaces in $\mathbb{R}^{n+1}$ with non-vanishing principal curvatures is completely determined
by the Laguerre invariants $\{g, \mathbb{B}\}$ when $n>2$ and by the Laguerre invariants $\{g, \mathbb{B}, \mathbb{L}\}$
when $n=2$.

Finally we recall from \cite{lit1, lit2} how $\{g, \mathbb{B},\mathbb{C}\}$ can be
calculated in terms of the geometry of $x$ in $\mathbb{R}^{n+1}$:
\begin{equation}\label{lac}
\begin{split}
&g=\sqrt {\sum_i(R_i-R)^2}III, ~~~~~ B_{ij}=\rho^{-1}(R_i-R)\delta_{ij}, \\
&C_i=-\rho^{-2}\{\tilde{E_i}(R)+\tilde{E_i}(log\rho)(R_i-R)\},
\end{split}
\end{equation}
here $\tilde{E}_i=R_ie_i$ and $\{e_i\}$ is an orthonormal frame with respect to the metric $dx\cdot dx$, consisting of unit principal vectors.

\begin{Definition}
Let $x: M^n\to \mathbb{R}^{n+1}$ be an immersed hypersurface with non-vanishing
principal curvatures, and the principal curvature radius
$\{R_1=\frac{1}{\lambda_1},R_2=\frac{1}{\lambda_2},\cdots,R_r=\frac{1}{\lambda_r}\}.$
For any three principal curvature radius $\lambda_i,\lambda_j,\lambda_s$, We define the Laguerre curvature of $x$
$$\Upsilon_{ijs}=\frac{R_i-R_j}{R_i-R_s}.$$
\end{Definition}

The eigenvalues of $\mathbb{B}$ are called the Laguerre principal curvatures of $x$. Then, from (\ref{lac}), the Laguerre principal
curvature $b_i=\rho^{-1}(R_i-R)$.
Hence
\begin{equation}\label{lag-cur-inv}
\Upsilon_{ijs}=\frac{R_i-R_j}{R_i-R_s}=\frac{b_i-b_j}{b_i-b_s},
\end{equation}
which implies that the Laguerre curvatures $\Upsilon_{ijs}$ are Laguerre invariants.

We now are ready to give the characterization of Dupin hypersurfaces with constant Laguerre curvatures in terms of Laguerre invariants.

\begin{PROPOSITION}\label{pro1}
Let $x: M^n\to \mathbb{R}^{n+1}$ be an oriented hypersurface with $r(\geq 3)$ distinct non-vanishing
principal curvatures. Then  $x$ is a Dupin hypersurface with constant Laguerre curvatures if and only if  its Laguerre form vanishes and all
Laguerre principal curvatures are constant.
\end{PROPOSITION}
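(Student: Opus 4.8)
The plan is to run, in the Laguerre setting, the argument that established Proposition~\ref{pro4} and Theorem~\ref{is1} in the M\"obius setting, with the curvature radii $R_i$ and the mean curvature radius $R$ playing the roles of $\lambda_i$ and $H$, and with \eqref{2.9} in place of \eqref{equa6}. The one preliminary computation is to rewrite the components $C_i$ of the Laguerre form in \eqref{lac} in terms of the Laguerre principal curvatures $b_i=\rho^{-1}(R_i-R)$. Writing $R_i=R+\rho b_i$ and using $\tilde E_i=R_i e_i$, one finds $\tilde E_i(R_i)=\tilde E_i(R)+\tilde E_i(\log\rho)(R_i-R)+\rho\,\tilde E_i(b_i)$, hence
\begin{equation}\label{lag-Ci}
C_i=-\rho^{-2}\big[\tilde E_i(R_i)-\rho\,\tilde E_i(b_i)\big]=-\rho^{-2}R_i\big[e_i(R_i)-\rho\,e_i(b_i)\big],
\end{equation}
which is the exact analogue of the M\"obius identity $C_i=-\rho^{-2}[e_i(\lambda_i)-\rho\,e_i(b_i)]$ derived above. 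I also record the obvious fact that, since $\rho$ and $R$ are common to all indices, $b_i=b_j$ holds precisely when $R_i=R_j$, i.e.\ when $\lambda_i=\lambda_j$; so the eigendistributions of $\mathbb{B}$ are exactly the curvature distributions of $x$.

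For the ``if'' direction, suppose $\mathbb{C}=0$ and all $b_i$ are constant. Then $\Upsilon_{ijs}=(b_i-b_j)/(b_i-b_s)$ is constant, so the Laguerre curvatures are constant. Moreover $e_i(b_i)=0$, so \eqref{lag-Ci} with $C_i=0$ forces $e_i(R_i)=0$, i.e.\ $e_i(\lambda_i)=0$, for every $i$; and for any $j$ with $\lambda_j=\lambda_i$ the functions $R_i$ and $R_j$ coincide on $M^n$, so $e_j(\lambda_i)=e_j(\lambda_j)=0$ as well. Hence each principal curvature is constant along its curvature distribution, i.e.\ $x$ is Dupin.

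For the ``only if'' direction, suppose $x$ is Dupin with constant Laguerre curvatures. First I would copy the proof of Proposition~\ref{pro4}: constancy of the $\Upsilon_{ijs}$ implies that, for every tangent vector $X$, the quotient $(X(b_i)-X(b_j))/(b_i-b_j)$ is the same for all pairs of distinct indices (this is where $r\ge 3$ enters), so there are functions $\mu,d$ of $X$ with $X(b_j)=\mu b_j+d$ for all $j$; substituting into the identities $\sum_i b_i=0$ and $\sum_i b_i^2=1$ contained in \eqref{2.9} gives $d=0$ and then $\mu=0$, so $X(b_j)=0$ and the $b_i$ are constant. With the $b_i$ constant, \eqref{lag-Ci} collapses to $C_i=-\rho^{-2}R_i\,e_i(R_i)=\rho^{-2}R_i\lambda_i^{-2}e_i(\lambda_i)$, and the Dupin hypothesis gives $e_i(\lambda_i)=0$, so $C_i=0$ for every $i$ and $\mathbb{C}$ vanishes.

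The whole argument is routine once \eqref{lag-Ci} is in hand; the only genuinely load-bearing points are that algebraic rewriting and the remark that, after the $b_i$ are known to be constant, the Dupin condition is equivalent to $e_i(R_i)=0$ for all $i$, which is exactly what ties it to the vanishing of the Laguerre form. This also recovers the statement noted in \cite{mtenen}.
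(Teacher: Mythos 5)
Your proof is correct and follows essentially the same route as the paper: rewrite $C_i$ using $b_i=\rho^{-1}(R_i-R)$ so that $C_i=-\rho^{-2}\{\tilde E_i(R_i)-\rho\,\tilde E_i(b_i)\}$, and adapt the argument of Proposition~\ref{pro4} (using $\sum_i b_i=0$, $\sum_i b_i^2=1$ from \eqref{2.9}) to get constancy of the $b_i$ from constancy of the $\Upsilon_{ijs}$. Incidentally, your coefficient $\rho$ on $\tilde E_i(b_i)$ is the correct one; the paper's display \eqref{lac1} writes $\rho^{-1}$ there, which appears to be a typo.
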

\begin{proof} First of all it is easily seen that, with a proof that is almost identical to the proof of
Proposition \ref{pro4}, the Laguerre curvatures $\Upsilon_{ijs}$ are constant if and only if
the Laguerre principal curvatures are constant.

Secondly, from (\ref{lac}), we have
\begin{equation}\label{lac1}
\begin{split}
C_i&=-\rho^{-2}\{\tilde{E}_i(r)+\tilde{E}_i(\rho)\rho^{-1}(r_i-r)\}=-\rho^{-2}\{\tilde{E}_i(r)+\tilde{E}_i(\rho)b_i\}\\
&=-\rho^{-2}\{\tilde{E}_i(r)+\tilde{E}_i(\rho b_i)-\rho^{-1}\tilde{E}_i(b_i)\}=-\rho^{-2}\{\tilde{E}_i(r_i)-\rho^{-1}\tilde{E}_i(b_i)\}\\
&=-\rho^{-2}\{-\frac{\tilde{E}_i(\lambda_i)}{\lambda_i^2}-\rho^{-1}\tilde{E}_i(b_i)\}.
\end{split}
\end{equation}
Therefore the proof of Proposition \ref{pro1} can easily be completed.
\end{proof}

In \cite{song}, an immersed hypersurface is said to be a Laguerre isoparametric hypersurface if its Laguerre
form  vanishes and its Laguerre Principal curvatures  are all constant. And Song \cite{song} has classified the
Laguerre isoparametric hypersurfaces with two distinct principal curvatures.
In these terminology, Theorem \ref{pro1} then says that, a
hypersurface in $\mathbb{R}^{n+1}$ is a Dupin surface with constant Laguerre curvatures if and only if
it is a Laguerre isoparametric hypersurface. We would like to mention that Proposition \ref{pro1} recently has also been observed in \cite{mtenen}.


\section{Dupin Hypersurfaces with constant Laguerre curvatures}\label{proof-th2}

In this section we will present the proof of Theorem \ref{th2}. We will first study examples of Dupin hypersyrfaces with constant Laguerre curvatures. Then we will proceed similar to
the M\"{o}bius cases to study the isoparametric tensors. This time the proof will be significantly simpler than that in M\"{o}bius cases because of \eqref{2.8} in contrast to
\eqref{equa4}.

\subsection{Examples}\label{lag-example}
\noindent
{\bf Cyclide of Dupin}. For any integer $k$ with $1\leq k\leq n-1$, let
$$
\mathbb{H}^{n-k} = \{(v, w)\in \mathbb{R}^{n-k+1}| |v|^2 -w^2 = -1 \text{ and } w>0\}
$$
be the hyperboloid in the Minkowski space
$\mathbb{R}_1^{n-k+1}$. We then consider the hypersurface
\begin{equation}\label{cyclide-dupin}
x(u,v,w)=(\frac{u}{w}(1+w),\frac{v}{w}): \mathbb{S}^k\times \mathbb{H}^{n-k}\to \mathbb{R}^{n+1},
\end{equation}
where $u: \mathbb{S}^k\to \mathbb{R}^{k+1}$ is the standard embedding of round sphere.

As was verified in \cite{lit1}, the hypersurface given in \eqref{cyclide-dupin} is a Laguerre isoparametric hypersurface with two distinct Laguerre principal curvatures and,
in fact, is Lie equivalent to the classical cyclide of Dupin of characteristic $(k,n-k)$.

\vskip 0.1in
\noindent
{\bf Flat Laguerre isoparametric hypersurface}. For any positive integers $m_1,\cdots,m_s$ with $m_1+\cdots+m_s=n$ and any non-zero constants $\kappa_1,\cdots,\kappa_s$,
we consider the hypersurface
\begin{equation}\label{flat-laguerre}
x(u_1,u_2,\cdots,u_s)=\big(\varphi,((1+\varphi\kappa_1)u_1,\cdots,(1+\varphi\kappa_s)u_s)\big): \mathbb{R}^n\to \mathbb{R}^{n+1},
\end{equation}
where $$\varphi=\frac{\kappa_1|u_1|^2+\cdots+\kappa_s|u_s|^2}{\kappa_1^2|u_1|^2+\cdots+\kappa_s^2|u_s|^2+1}$$
for $(u_1,u_2,\cdots,u_s)\in \mathbb{R}^{m_1}\times \mathbb{R}^{m_2}\times \cdots \mathbb{R}^{m_s}=\mathbb{R}^n$.

Again, as was shown in \cite{lit1}, the hypersurface given in \eqref{flat-laguerre} is a Laguerre isoparametric hypersurface with $s$
distinct Laguerre principal curvatures and, moreover, its Laguerre metric $g = <dY, dY>$ is flat, that is, $R_{ijij}[g]=0$.
Hence we call such hypersurfaces flat Laguerre isoparametric hypersurfaces.

\subsection{Proof of Theorem \ref{th2}}

The proof of Theorem \ref{th2} similar to that of M\"{o}bius cases uses isoparametric tensors. From the integrability conditions for the Laguerre invariants
$\mathbb{L}, \mathbb{B}, \mathbb{C}$ and the Laguerre metric $g$ it is clear that the Laguerre second fundamental form $\mathbb{B}$ is an isoparametric tensor
on a Dupin hypersurface with constant Laguerre curvatures in the light of Proposition \ref{pro1}. For the definition and basic properties for isoparametric tensors readers
are referred to Appendix A. First we want to show that the Laguerre symmetric 2-tensor $\mathbb{L}$ is also an isoparametric tensor on a Dupin hypersurface with constant
Laguerre curvatures. From \eqref{2.5}, it suffices to show that the eigenvalues of $\mathbb{L}$ are all constant, according to Definition \ref{iso-t}.

\begin{Lemma}\label{pro2}
Let $x: M^n\to \mathbb{R}^{n+1}$ be a Laguerre isoparametric hypersurface. Then the eigenvalues of
the Laguerre tensor $\mathbb{L}$ are all constant.
\end{Lemma}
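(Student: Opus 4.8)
The plan is to follow the proof of Theorem~\ref{blas} essentially verbatim, replacing the M\"obius integrability conditions \eqref{equa1}--\eqref{equa4} by the Laguerre ones \eqref{2.5}--\eqref{2.8}; indeed the situation is precisely the one isolated in Remark~\ref{codazzi-iso}, now with the sign-simplified curvature relation $R_{ijij}=-a_i-a_j$ coming straight from \eqref{2.8}. First I would invoke Proposition~\ref{pro1}: since $x$ is Laguerre isoparametric with $r\ge 3$ distinct Laguerre principal curvatures (the remaining case $r=2$ for an umbilic-free hypersurface being covered by Song's classification \cite{song}, or handled directly since then the sums defining $\nabla\mathbb{B}$ and the sectional curvatures across distinct eigenspaces vanish), the Laguerre form $\mathbb{C}$ vanishes and the Laguerre principal curvatures are constant, so $\mathbb{B}$ is an isoparametric tensor. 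With $\mathbb{C}=0$, equations \eqref{2.5} and \eqref{2.7} say that $\mathbb{L}$ and $\mathbb{B}$ are Codazzi tensors, and \eqref{2.6} reduces to $\sum_k(B_{ik}L_{kj}-B_{jk}L_{ki})=0$, i.e.\ $\mathbb{L}$ and $\mathbb{B}$ commute. Hence one may pick an orthonormal frame $\{E_i\}$ simultaneously diagonalizing them, $B_{ij}=b_i\delta_{ij}$ with the $b_i$ constant and $L_{ij}=a_i\delta_{ij}$, and the task is to show that each $a_i$ is locally constant.

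Next I would use the Codazzi identity $(a_i-a_j)\omega_{ij}=\sum_k L_{ij,k}\omega_k$ together with the structure of $\nabla\mathbb{B}$ for an isoparametric tensor (as in \eqref{w2}), namely $\omega_{ij}=\sum_k\frac{B_{ij,k}}{b_i-b_j}\omega_k$ when $[i]\ne[j]$ and $B_{ij,k}=0$ whenever two of the blocks $[i],[j],[k]$ coincide. Comparing coefficients gives $(a_i-a_j)\frac{B_{ij,k}}{b_i-b_j}=L_{ij,k}$ when $[i]\ne[j]$, whence, using $B_{ij,j}=0$ and the Codazzi symmetry of $\mathbb{L}$, $E_i(a_j)=L_{jj,i}=L_{ij,j}=0$ whenever $[i]\ne[j]$. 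So it remains only to prove $E_i(a_j)=0$ for $i\in[j]$.

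Finally I would run the pointwise dichotomy of Theorem~\ref{blas}. Fix $p$ and $j$. If some $B_{jk,l}\ne 0$ on a neighbourhood of $p$ --- forcing $[j],[k],[l]$ pairwise distinct, since $\nabla\mathbb{B}$ is totally symmetric and vanishes on any repeated block --- then $\frac{a_j-a_k}{b_j-b_k}=\frac{L_{jk,l}}{B_{jk,l}}=\frac{L_{lk,j}}{B_{lk,j}}=\frac{a_l-a_k}{b_l-b_k}$, so $a_j=(a_l-a_k)\frac{b_j-b_k}{b_l-b_k}+a_k$; differentiating and using the previous step gives $E_i(a_j)=0$. If instead all $B_{jk,l}$ vanish on a neighbourhood $U$ of $p$, then (analogous to \eqref{cur1}) $R_{jkjk}=0$ on $U$ for $k\notin[j]$, and \eqref{2.8} turns this into $a_j=-a_k$ on $U$; since then $i\notin[k]$ for $i\in[j]$, the previous step yields $E_i(a_j)=-E_i(a_k)=0$. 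The intermediate case, where these components of $\nabla\mathbb{B}$ vanish at $p$ but on no neighbourhood, follows by continuity: $da_j$ vanishes on the open set where the first alternative applies, and $p$ lies in its closure. The only slightly delicate point is organizing this case split and the limiting argument cleanly; everything else is a direct transcription of the M\"obius argument, made easier by the absence of the $B_{ik}B_{jl}$ terms in \eqref{2.8}.
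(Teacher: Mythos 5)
Your argument is essentially the paper's: for $r\geq 3$ you transcribe the proof of Theorem~\ref{blas} (this is precisely Remark~\ref{codazzi-iso}), with the simplification $R_{ijij}=-a_i-a_j$ from \eqref{2.8} replacing the M\"obius formula $R_{ijij}=b_ib_j+a_i+a_j$, and this is correct. The one place you diverge is $r=2$, which you only gesture at; the paper gives it a short dedicated computation: $\mathbb{B}$ isoparametric with two eigenvalues is parallel, so $R_{ijij}=0$ across blocks, which together with \eqref{2.8} forces $\mathbb{L}=\mathrm{diag}(\tau,\dots,\tau,-\tau,\dots,-\tau)$, and then $E_k(\tau)=L_{ii,k}=L_{ik,i}=\tfrac{2\tau}{b_1-b_2}B_{ik,i}=0$. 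In fact your own dichotomy already handles $r=2$ without separate treatment --- the ``all $B_{jk,l}=0$'' alternative holds everywhere, giving $a_j=-a_k$ and hence $E_i(a_j)=-E_i(a_k)=0$ for $i\in[j]$, $k\notin[j]$ --- so the appeals to Song's classification and to Proposition~\ref{pro1} (whose $r\geq 3$ hypothesis you do not actually need, since ``Laguerre isoparametric'' is \emph{defined} by $\mathbb{C}=0$ and constant Laguerre principal curvatures) can simply be dropped.
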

\begin{proof} This lemma in the cases when $x$ has more than 2 distinct principal curvatures follows from Remark \ref{codazzi-iso} after the proof
of Theorem \ref{blas}. Next we consider $r=2$. And
$$
(L_{ij}) = diag(\tau_1, \tau_2, \cdots, \tau_n) \text{ and } (B_{ij})=diag(\underbrace{b_1,\cdots,b_1}_{s},\underbrace{b_2,\cdots,b_2}_{n-s}).$$
Since $\mathbb{B}$ is an isoparametric tensor with two distinct eigenvalues, from Proposition \ref{isop1} and Proposition \ref{isop},
we know that
$$
R_{ijij}=0,~~1\leq i\leq s, ~~s+1\leq j\leq n.
$$
On the other hand, from (\ref{2.8}) we know $R_{ijij} = - \tau_i - \tau_j$ and then
$$
(L_{ij}) = diag(\tau_1,\tau_2,\cdots,\tau_n)=diag(\underbrace{\tau,\cdots,\tau}_k,\underbrace{-\tau,\cdots,-\tau}_{n-k}).
$$
Therefore
$$
E_k(\tau)= E_k(\tau_i) = L_{ii,k}= L_{ik,i} = \frac {2\tau}{b_1-b_2}B_{ik,i} = 0
$$
for $i\in\{1, 2, \cdots, s\}$ if $k\in\{s+1, s+2, \cdots, n\}$ and
$$
E_k(\tau)= -E_k(\tau_i) = - L_{ii,k}= - L_{ik,i} = \frac{2\tau}{b_2-b_1} B_{ik,i} = 0
$$
for $i\in \{s+1, s+2, \cdots, n\}$ and $i\neq k$ if $k\in\{1, 2, \cdots, s\}$, in the light of \eqref{tensor1}. Thus $\tau$ is constant.
\end{proof}

Now we know, under the assumptions of Lemma \ref{pro2}, $\mathbb{B}$ and $\mathbb{L}$ are commuting isoparametric tensors. We may choose a local
orthonormal frame so that
$$
\left(B_{ij}\right)=diag(b_1,\cdots,b_n)$$
$$\left(L_{ij}\right)=diag(\tau_1,\cdots,\tau_n)=diag(\tau_{\tilde{1}},\cdots,\tau_{\tilde{1}},\tau_{\tilde{2}},
\cdots,\tau_{\tilde{2}},\cdots,b_{\tilde{t}},\cdots,\tau_{\tilde{t}}),$$
where $t$ denotes the number of the distinct eigenvalues of $\mathbb{L}$. We then define the index set
$$\{i\}=\{k\in\{1,2,\cdots, n\}|\tau_k=\tau_i\}
$$
according to the repeated eigenvalues of Laguerre tensor $\mathbb{L}$ in this section. The main observation that leads to the proof of Theorem \ref{th2}
is the following:

\begin{Theorem}\label{pro3}
Let $x: M^n\to \mathbb{R}^{n+1}$ be a Laguerre isoparametric hypersurface. Then the Laguerre second fundamental form $\mathbb{B}$
is parallel.
\end{Theorem}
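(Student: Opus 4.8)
The plan rests on the observation that, by Proposition~\ref{pro1} and Lemma~\ref{pro2}, $\mathbb{B}$ and $\mathbb{L}$ are two commuting isoparametric tensors on $(M^n,g)$ with Laguerre form $\mathbb{C}=0$, so that by \eqref{2.5} and \eqref{2.7} both $\nabla\mathbb{B}$ and $\nabla\mathbb{L}$ are totally symmetric $3$-tensors. I would fix a local frame $\{E_i\}$ simultaneously diagonalizing $\mathbb{B}$ and $\mathbb{L}$, with eigenvalues $b_i$ and $\tau_i$. Since $\mathbb{B}$ is isoparametric, the analogue of \eqref{w2} gives $B_{ij,k}=0$ unless the $\mathbb{B}$-eigenspace labels $[i],[j],[k]$ are pairwise distinct; hence $\mathbb{B}$ is parallel as soon as $B_{ij,k}=0$ for every such triple, and I argue this by contradiction, assuming $B_{ij,k}\neq 0$ for some $i,j,k$ in three distinct eigenspaces (so $r\geq 3$; for $r\leq 2$ there is nothing to prove).

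Next I would record, exactly as in \eqref{aw1}, the identity $(\tau_i-\tau_j)B_{ij,k}=(b_i-b_j)L_{ij,k}$, valid for all $i,j,k$ because both tensors are Codazzi. Combined with the total symmetry of $\nabla\mathbb{B}$ and $\nabla\mathbb{L}$, at a point where $B_{ij,k}\neq 0$ the three ratios $\frac{\tau_i-\tau_j}{b_i-b_j}$, $\frac{\tau_i-\tau_k}{b_i-b_k}$, $\frac{\tau_j-\tau_k}{b_j-b_k}$ coincide: the pairs $(b_i,\tau_i),(b_j,\tau_j),(b_k,\tau_k)$ are collinear, on a line $\tau=\varepsilon b+d$. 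This is the Laguerre incarnation of the sets $S_{(i)}(\varepsilon)$ of Section~\ref{subsec:iso-case}, and I would transport the relevant parts of Lemma~\ref{le2} and Lemma~\ref{le31}, which use only that $\mathbb{B}$ is isoparametric with a commuting Codazzi partner, not the M\"obius Gauss equation \eqref{equa4}. Letting $S$ be the set of all eigenvalue pairs on this line, one gets $B_{pq,m}=0$ whenever the pairs of $p,q$ lie in $S$ (in distinct eigenspaces) while that of $m$ does not, so the curvature identity \eqref{cur1} closes over $S$; ordering the pairs of $S$ by their $b$-coordinate as $b_{i_1}<b_{i_2}<\cdots<b_{i_t}$ with $t\geq 3$, Lemma~\ref{le31} yields $R_{i_1i_2i_1i_2}\geq 0$, $R_{i_{t-1}i_ti_{t-1}i_t}\geq 0$ and $R_{i_1i_ti_1i_t}\leq 0$.

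Here the Laguerre situation is rigid and the argument becomes short. By \eqref{2.8} the sectional curvatures have the explicit value $R_{i_ai_bi_ai_b}=-\tau_{i_a}-\tau_{i_b}=-\varepsilon(b_{i_a}+b_{i_b})-2d$, with all $\tau$'s constant by Lemma~\ref{pro2}. Subtracting the extreme curvature from each of the two consecutive ones cancels $d$ and gives $\varepsilon(b_{i_t}-b_{i_2})\geq 0$ and $\varepsilon(b_{i_1}-b_{i_{t-1}})\geq 0$, forcing $\varepsilon=0$; then $-2d\geq 0$ and $-2d\leq 0$ force $d=0$. Thus all $\tau_{i_a}$ vanish on $S$ and $R_{i_ai_bi_ai_b}=0$ for all $a,b$. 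A bootstrap now finishes the proof: arguing by induction on the gap $b-a$, write $0=R_{i_ai_bi_ai_b}=\sum_m\frac{2B_{i_ai_b,m}^2}{(b_{i_a}-b_m)(b_{i_b}-b_m)}$, the sum over indices whose pair lies in $S$; for consecutive $a,b$ every denominator is positive, while for non-consecutive $a,b$ the terms with $b_{i_a}<b_m<b_{i_b}$ already vanish by the inductive hypothesis (via the symmetry $B_{i_ai_b,m}=B_{i_am,i_b}$), the remaining denominators being positive. Hence every $B_{i_ai_b,m}=0$, in particular $B_{ij,k}=0$, a contradiction; therefore $\mathbb{B}$ is parallel.

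The main obstacle is bookkeeping rather than a new idea: one must verify carefully that the $S_{(i)}(\varepsilon)$-machinery of Section~\ref{subsec:iso-case} carries over verbatim with $\mathbb{L}$ in place of the Blaschke tensor and with \eqref{2.8} in place of \eqref{equa4} --- in particular the closing of \eqref{cur1} over $S$ and the sign statements of Lemma~\ref{le31} --- and to note that the argument is insensitive to whether $\mathbb{L}$ has more than one eigenvalue on a single $\mathbb{B}$-eigenspace, the conclusion being unconditional. A Bochner-type identity for $|\nabla\mathbb{B}|^2$ can be written down as a cross-check, but it does not appear to close on its own since $M^n$ need not be compact.
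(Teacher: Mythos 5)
Your proof is correct, but it takes a genuinely different and substantially heavier route than the paper's. The paper exploits \eqref{2.8} more directly: since $R_{ijij}=-\tau_i-\tau_j$ holds for \emph{every} pair of indices, it applies the generalized Cartan identity \eqref{tensor4} to the isoparametric tensor $\mathbb{L}$ rather than to $\mathbb{B}$, obtaining
$$\sum_{j\notin\{i\}}\frac{\tau_i^2-\tau_j^2}{(\tau_j-\tau_i)^2}=0;$$
taking $i$ so that $\tau_i^2$ is maximal forces all $\tau_j^2$ to coincide, so $\mathbb{L}$ has at most two eigenvalues $\pm\tau$. If $\tau=0$ the Laguerre metric is flat by \eqref{2.8} and Theorem~\ref{lecoda} finishes; if $\tau\neq 0$, $\mathbb{L}$ is parallel with two eigenvalues, the manifold splits by Propositions~\ref{isop1} and~\ref{isop} so that $R_{ijij}=0$ across the factors, and one more Cartan computation on $\mathbb{B}$ within each factor shows each factor carries a single $\mathbb{B}$-eigenvalue, hence $\mathbb{B}$ has at most two eigenvalues overall and is parallel by Proposition~\ref{isop1}. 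You instead import the full $(b,\tau)$-collinearity and $S_{(i)}(\varepsilon)$-set machinery from Section~\ref{subsec:iso-case} and argue by contradiction. This does work: the closure of \eqref{cur1} over $S$ via the ratio identity $(\tau_i-\tau_j)B_{ij,k}=(b_i-b_j)L_{ij,k}$, the sign analysis forcing $\varepsilon=0$ and then $d=0$, and the gap-induction bootstrap are all sound, and you are right that none of it requires \eqref{equa4} once one checks that the relevant parts of Lemma~\ref{le2} and Lemma~\ref{le31} only use the commuting-Codazzi-with-constant-eigenvalues structure, not the M\"obius Gauss equation. What your route buys is an explicit demonstration that the Section~\ref{subsec:iso-case} machinery is formal and transportable; what it costs is length and an extra layer of bookkeeping — the paper never has to organize eigenvalue pairs into lines at all, since feeding \eqref{2.8} straight into the Cartan identity for $\mathbb{L}$ collapses the problem to two eigenvalues in one step.
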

\begin{proof} First, from (\ref{2.8}) and Cartan identity \eqref{tensor4}, we have
\begin{equation}\label{car}
\sum_{j\notin \{i\}}\frac{R_{ijij}}{\tau_j-\tau_i}=\sum_{j\notin \{i\}}\frac{-\tau_j-\tau_i}{\tau_j-\tau_i}=\sum_{j\notin \{i\}}\frac{\tau_i^2-\tau_j^2}{(\tau_j-\tau_i)^2}=0.
\end{equation}
Let $\tau_{i_0}^2=max\{\tau_1^2,\cdots,\tau_n^2\}$. And let $i=i_0$ in equation (\ref{car}), we have
$$\sum_{j\notin (i_0)}\frac{\tau_{i_0}^2-\tau_j^2}{(\tau_j-\tau_{i_0})^2}=0,$$
which implies that $\tau_{i_0}^2-\tau_j^2=0$ for $j\in\{1, 2, \cdots, n\}$ and therefore $t\leq 2$.

If $t=2$, then
$$
(L_{ij}) = diag(\tau_1,\tau_2,\cdots,\tau_n)=diag(\underbrace{\tau,\cdots,\tau}_{s},\underbrace{-\tau,\cdots,-\tau}_{n-s})
$$
for some constant $\tau \neq 0$. Moreover, from Proposition \ref{isop1} and Proposition \ref{isop}, we know $\mathbb{L}$ is parallel and
\begin{equation}\label{r0}
R_{ijij}=0,~~~~1\leq
i\leq s,~~s+1\leq j\leq n.
\end{equation}
Now we switch the order of $\{E_1,\cdots,E_s\}$ such that
\begin{equation*}
\left(B_{ij}\right)_{1\leq i,j\leq s} =diag(b_1,\cdots,b_s)=diag(b_{\hat{1}},\cdots,b_{\hat{1}},b_{\hat{2}},
\cdots,b_{\hat{2}},\cdots,b_{\hat{l}},\cdots,b_{\hat{l}})
\end{equation*}
and $b_{\hat{1}}<b_{\hat{2}}<\cdots<b_{\hat{l}}$. Assume that $l\geq 2$. From the Cartan identity \eqref{tensor4} and \eqref{r0}, we have
$$\sum_{1\leq j\leq n, b_j\neq b_{\hat{1}}}\frac{R_{1j1j}}{b_j-b_{\tilde{1}}}=\sum_{1\leq j\leq s, b_j\neq b_{\tilde{1}}}\frac{-2\tau_{\tilde{1}}}{b_j-b_{\tilde{1}}}=0,$$
which leads to the contradiction to $\tau\neq 0$. Therefore $l=1$ and, in fact, the Laguerre second fundamental form $\mathbb{B}$
has at most two distinct principal curvatures. Thus, from Proposition \ref{isop1}, $\mathbb{B}$ is parallel.

If otherwise $t=1$, then we can assume $L_{ij}=\tau\delta_{ij}$. Let $b_1$ be the smallest eigenvalues of $\mathbb{B}$. We then consider the Cartan identity
\eqref{tensor4}
$$
\sum_{1\leq j\leq n, b_j\neq b_1}\frac{R_{1j1j}}{b_j-b_1}=\sum_{1\leq j\leq n, b_j\neq b_1}\frac{-2\tau}{b_j-b_1}=0$$
and conclude that $\tau=0$, which implies $R_{ijkl}=0$ and the Laguerre second fundamental form is parallel using Theorem \ref{lecoda}.
So the proof is complete.
\end{proof}

Finally Theorem \ref{th2} follows from Theorem \ref{pro3} and the classification result in \cite{lit1}.

\begin{Theorem}\label{par} (\cite{lit1})
Let $x: M^n\to \mathbb{R}^{n+1}$ be an umbilical free hypersurface with non-vanishing principal curvatures. If its Laguerre second fundamental form is parallel,
then locally $x$ is Laguerre equivalent to one of the following hypersurfaces,\\
(1) the Cyclide of Dupin $x: \mathbb{S}^k\times \mathbb{H}^{n-k}\to \mathbb{R}^{n+1}$ given in \eqref{cyclide-dupin}\\
(2) the flat Laguerre isoparametric hypersurface $x: \mathbb{R}^n\to \mathbb{R}^{n+1}$ given in \eqref{flat-laguerre}.
\end{Theorem}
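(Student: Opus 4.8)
The plan is to combine the algebraic consequences of $\nabla\mathbb{B}=0$ with the fundamental theorem of Laguerre hypersurface geometry (that the pair $\{g,\mathbb{B}\}$ determines $x$ up to a Laguerre transformation when $n>2$, and $\{g,\mathbb{B},\mathbb{L}\}$ does when $n=2$), so that the classification is reduced to matching the intrinsic data with the two model families \eqref{cyclide-dupin} and \eqref{flat-laguerre}.

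First I would extract the linear algebra. Feeding $B_{ij,k}=0$ into \eqref{2.7} gives $C_j\delta_{ik}-C_k\delta_{ij}=0$, hence the Laguerre form $\mathbb{C}$ vanishes; then \eqref{2.6} forces $\mathbb{B}$ and $\mathbb{L}$ to commute, while $\nabla\mathbb{B}=0$ makes the eigenvalues $b_i$ of $\mathbb{B}$ constant and the eigendistributions parallel. The normalizations $\sum_i B_{ii}=0$ and $\sum_{ij}B_{ij}^2=1$ together with umbilic-freeness force at least two distinct $b_i$. Simultaneously diagonalizing $\mathbb{B}$ and $\mathbb{L}$ and using $R_{ijij}=-L_{ii}-L_{jj}$ from \eqref{2.8} together with the generalized Cartan identity \eqref{tensor4}, evaluated at the index realizing $\max_j\tau_j^2$ exactly as in the proof of Theorem \ref{pro3}, one gets that $\mathbb{L}$ has at most two distinct eigenvalues, necessarily $\pm\tau$ for a constant $\tau\ge 0$; since $\mathbb{L}$ is then a Codazzi tensor with at most two constant eigenvalues it is parallel by Proposition \ref{isop1}/Proposition \ref{isop}. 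A second use of \eqref{tensor4}, now with $\mathbb{B}$ (which has $\ge 2$ eigenvalues), forces $\tau=0$ whenever $\mathbb{L}$ is a multiple of $g$; so the only two possibilities are $\mathbb{L}\equiv 0$, or $\mathbb{L}$ has exactly the two eigenvalues $\pm\tau$ with $\tau>0$.

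This produces the dichotomy. If $\mathbb{L}\equiv 0$, then \eqref{2.8} gives $R_{ijkl}=0$, so the Laguerre metric $g$ is flat, and $\mathbb{B}$ is parallel with constant eigenvalues of prescribed multiplicities; locally $(M^n,g)$ is a flat product on which $\mathbb{B}$ is block-constant. If instead $\mathbb{L}$ has eigenvalues $\pm\tau$, the two parallel eigendistributions $W_-,W_+$ give a local Riemannian product $(M^n,g)=(M_-,g_-)\times(M_+,g_+)$, and \eqref{2.8} yields $R_{ijij}=2\tau>0$ inside $W_-$, $R_{ijij}=-2\tau<0$ inside $W_+$, and $R_{ijij}=0$ across, so $g_-$ has constant curvature $2\tau$ and $g_+$ constant curvature $-2\tau$. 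Since a space form of nonzero curvature admits no nontrivial Riemannian product splitting while the $\mathbb{B}$-eigendistributions are parallel, $\mathbb{B}$ restricts to a constant multiple of the identity on each of $W_\pm$; hence $\mathbb{B}$ has exactly two distinct eigenvalues $b_\mp$ with eigenspaces $W_\mp$, and $\sum B_{ii}=0$, $\sum B_{ij}^2=1$ pin them (and $\tau$) down in terms of $\dim W_\pm$.

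It remains to recover the immersion in each case, which I would do with the structure equations \eqref{2.2}--\eqref{2.4}. In the flat case $\mathbb{C}\equiv\mathbb{L}\equiv 0$ makes $N$ a fixed null vector and reduces the equations to $E_i(\eta)=\sum_j B_{ij}E_j$ and $E_j(E_i)=\delta_{ij}N+\sum_k\Gamma^k_{ij}E_k+B_{ij}\mathfrak{p}$; integrating over the flat coordinate blocks dual to the $\mathbb{B}$-eigenspaces realizes the Laguerre position vector $Y$ as that of a hypersurface of the form \eqref{flat-laguerre}, with $m_1,\dots,m_s$ the multiplicities of $\mathbb{B}$. In the product case the structure equations show that suitable linear combinations of $Y,N,\eta,\mathfrak{p}$ and the frame vectors in $W_\pm$ span fixed, mutually orthogonal Lorentzian and spacelike subspaces of $\mathbb{R}^{n+4}_2$, which forces $Y$ into a product configuration coinciding with the Laguerre position vector of the cyclide of Dupin \eqref{cyclide-dupin} of characteristic $(\dim W_-,\dim W_+)$; equivalently one computes $g$ and $\mathbb{B}$ for \eqref{cyclide-dupin} and \eqref{flat-laguerre} from \eqref{lac} and checks they match the data above. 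Either way the fundamental theorem of Laguerre geometry closes the argument, using $\{g,\mathbb{B},\mathbb{L}\}$ when $n=2$. The main obstacle is this last step — the moving-frame integration (or the direct computation of the invariants of the two models) together with the bookkeeping of eigenvalues, multiplicities, and curvatures needed to certify the match, and the separate treatment of low-dimensional subcases; everything preceding it is the Cartan-identity and isoparametric-tensor machinery already developed in Section \ref{proof-th2} and Appendix A.
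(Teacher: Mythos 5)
Theorem \ref{par} is cited from \cite{lit1}; the present paper does not reprove it, so there is no in-paper proof to compare your attempt against. That said, your preliminary reductions are sound and they closely reproduce machinery the paper does develop: deducing $\mathbb{C}=0$ from \eqref{2.7}, that $\mathbb{B}$ and $\mathbb{L}$ commute from \eqref{2.6}, that $\mathbb{L}$ is an isoparametric tensor (Lemma \ref{pro2}), and that $\mathbb{L}$ has at most two eigenvalues $\pm\tau$ via \eqref{2.8} and \eqref{tensor4} is exactly the content of Theorem \ref{pro3}, which you correctly invoke. Your de~Rham argument that $\mathbb{B}$ is proportional to the metric on each $\mathbb{L}$-eigenfactor when $\tau\neq 0$ (because a factor of constant nonzero curvature and dimension $\geq 2$ has irreducible holonomy) is a legitimate alternative to the Cartan-identity computation the paper uses for the $t=2$ branch of Theorem \ref{pro3}; both yield the same dichotomy, with the normalizations $\sum_iB_{ii}=0$, $\sum_{ij}B_{ij}^2=1$ excluding $\mathbb{B}\propto g$.

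The genuine gap, which you flag yourself, is the final realization step: you assert but do not verify that the two resulting intrinsic packages, namely (i) flat $g$ together with a parallel, traceless, unit-norm $\mathbb{B}$ with $s\geq 2$ constant eigenvalues, and (ii) a product $g=g_+\oplus g_-$ of space forms of curvatures $\mp 2\tau$ together with $\mathbb{B}=b_+g_+\oplus b_-g_-$, are precisely the Laguerre data of \eqref{flat-laguerre} and \eqref{cyclide-dupin} respectively. For the flat family one must compute $(g,\mathbb{B})$ from \eqref{lac} and show the assignment $(\kappa_1,\dots,\kappa_s)\mapsto(b_1,\dots,b_s)$ reaches every admissible eigenvalue tuple subject to the two trace constraints; for the cyclide family one must check that the computed $\tau$ and $b_\pm$ of \eqref{cyclide-dupin} match the values forced by the normalizations once $k$ is fixed, since there is no continuous parameter in \eqref{cyclide-dupin} to adjust. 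Only after such a match is certified can the fundamental theorem of Laguerre hypersurfaces (the uniqueness of $x$ given $\{g,\mathbb{B}\}$ for $n>2$, or $\{g,\mathbb{B},\mathbb{L}\}$ for $n=2$) be invoked to conclude. This explicit computation, or the equivalent moving-frame integration, is precisely the substance of \cite{lit1} and cannot be waved through; until it is supplied the proposal is an outline rather than a proof.
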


\appendix

\section{Isoparametric tensors}

\subsection{Definition and properties of isoparametric tensors}\label{subsec:iso-property-1}

A symmetric $(0,2)$ tensor field $T=\sum_{ij}T_{ij}\omega_i\otimes\omega_j$ on a Riemannian manifold  $(M^n,g)$ is said to be a Codazzi tensor if it satisfies the
Codazzi equation
\begin{equation}\label{codazzi}
\nabla_XT(Y,Z)=\nabla_YT(X,Z),
\end{equation}
for arbitrary vector fields $X,Y,Z$, where $\nabla$ denotes the Riemannian connection.
\begin{Definition}\label{iso-t}
A Codazzi tensor on a Riemannian manifold $(M^n, \ g)$ is said to be an isoparametric tensor if the  eigenvalues  are all constant.
\end{Definition}
Let $T=\sum_{ij}T_{ij}\omega_i\otimes\omega_j$ be an isoparametric tensor on a Riemannian manifold $(M^n,g)$. We can choose a local orthonormal
basis $\{E_1,\cdots,E_n\}$ such that
$$\left(T_{ij}\right)=diag(b_{\bar{1}},\cdots,b_{\bar{1}},b_{\bar{2}},\cdots,b_{\bar{2}},\cdots,b_{\bar{r}},\cdots,b_{\bar{r}}),
$$
where $b_{\bar{1}}<\cdots<b_{\bar{r}}$ are constants. Hence
\begin{equation}\label{codac}
(b_i-b_j)\omega_{ij}=\sum_kT_{ij,k}\omega_k,
\end{equation}
which gives
\begin{equation}\label{tensor1}
\begin{split}
&T_{ij,k}=0,~~when~~[i]=[j],~or~[j]=[k],~or~[i]=[k],\\
&\omega_{ij}=\sum_k\frac{T_{ij,k}}{b_i-b_j}\omega_k=\sum_{k\notin [i],[j]}\frac{T_{ij,k}}{b_i-b_j}\omega_k,~~when~~[i]\neq[j],
\end{split}
\end{equation}
where $[i]:=\{m\in \{1, 2, \cdots, n\}|b_m=b_i\}.$ Consequently we have
\begin{PROPOSITION}\label{isop1}
Let $(M^n,g)$ be a Riemannian manifold. If an isoparametric tensor $T$ on $(M^n,g)$ has only two distinct eigenvalues, then
$T$ is parallel.
\end{PROPOSITION}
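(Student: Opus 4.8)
The plan is to read off the conclusion directly from the vanishing relations recorded in \eqref{tensor1}. First I would fix a local orthonormal frame $\{E_1,\cdots,E_n\}$ diagonalizing $T$, so that $\left(T_{ij}\right)=diag(b_{\bar{1}},\cdots,b_{\bar{1}},b_{\bar{2}},\cdots,b_{\bar{2}})$ with the two distinct constant eigenvalues $b_{\bar{1}}<b_{\bar{2}}$, and write $T_{ij,k}$ for the components of the covariant derivative $\nabla T$ in this frame; thus $T$ is parallel if and only if $T_{ij,k}=0$ for all $i,j,k$. The index classes $[i]=\{m\in\{1,\cdots,n\}:b_m=b_i\}$ then partition $\{1,\cdots,n\}$ into exactly two blocks.

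The main point is an elementary counting observation. By the first line of \eqref{tensor1}, a component $T_{ij,k}$ vanishes whenever two of the three indices $i,j,k$ lie in the same class, i.e. it can be nonzero only if $i,j,k$ belong to three pairwise distinct classes. Since there are only two classes available, the pigeonhole principle forces at least two of $i,j,k$ into a common block, so $T_{ij,k}=0$ for every triple $(i,j,k)$. Hence $\nabla T=0$, that is, $T$ is parallel.

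There is no genuine obstacle here: all the substantive work has already gone into deriving \eqref{codac} and \eqref{tensor1} from the Codazzi equation \eqref{codazzi} and the constancy of the eigenvalues, and the remaining argument is just the two-block pigeonhole count above. The only point requiring mild care is that ``two distinct eigenvalues'' should be understood locally, so that the multiplicities of $b_{\bar{1}}$ and $b_{\bar{2}}$ are locally constant and the diagonalizing frame together with the relations \eqref{tensor1} is available on the neighborhood in question; this is exactly the setting in which the claim is stated and used.
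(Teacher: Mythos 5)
Your proof is correct and is essentially the same argument the paper intends: Proposition~\ref{isop1} is stated immediately after \eqref{tensor1} with the word ``Consequently,'' and the intended reasoning is exactly your two-block pigeonhole count, using the vanishing $T_{ij,k}=0$ whenever two of the three indices share a class, which here is always.
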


It is well-known that a nontrivial parallel 2-tensor on a Riemannian manifold induces a splitting of Riemannian structure. Namely,

\begin{PROPOSITION}\label{isop} (\cite[Chap.4]{sn}).
Let $(M^n,g)$ be a Riemannian manifold. If $T$ is a parallel symmetric $(0,2)$ tensor field on $(M^n,g)$,
then, locally,
$$(M^n,g)=(M_1,g_1)\times(M_2,g_2)\times\cdots\times(M_r,g_r).$$
and there exist $r$ constants $\lambda_1,\cdots,\lambda_r$ such that
$$T=\lambda_1g_1\oplus\lambda_2g_2\oplus\cdots\oplus\lambda_rg_r.$$
\end{PROPOSITION}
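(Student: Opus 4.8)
The plan is to reproduce the classical argument underlying the (local) de Rham decomposition, using the parallelism of $T$ to manufacture the splitting. Throughout I identify $T$ with the $g$-self-adjoint endomorphism field $\widehat{T}$ determined by $g(\widehat{T}X,Y)=T(X,Y)$; since $g$ is parallel, $\nabla T=0$ is equivalent to $\nabla\widehat{T}=0$.

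First I would establish that the eigenvalues of $\widehat{T}$ are constant on the (connected) manifold $M$. Because $\nabla\widehat{T}=0$, parallel transport $P_\gamma$ along any curve $\gamma$ intertwines the endomorphisms at the endpoints, $P_\gamma\circ\widehat{T}_{\gamma(0)}=\widehat{T}_{\gamma(1)}\circ P_\gamma$; as $P_\gamma$ is a linear isomorphism, $\widehat{T}_{\gamma(0)}$ and $\widehat{T}_{\gamma(1)}$ share the same characteristic polynomial, so the eigenvalues $\lambda_1<\cdots<\lambda_r$ and their multiplicities $m_1,\cdots,m_r$ are globally constant. Consequently the eigendistributions $D_i:=\ker(\widehat{T}-\lambda_i\,\mathrm{Id})$ have constant rank, hence are smooth subbundles, are mutually $g$-orthogonal by self-adjointness, and satisfy $TM=D_1\oplus\cdots\oplus D_r$.

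Next I would check that each $D_i$ is a parallel distribution. This is immediate from $\nabla\widehat{T}=0$: for $Y\in\Gamma(D_i)$ and any vector field $X$, $\widehat{T}\,\nabla_XY=\nabla_X(\widehat{T}Y)=\nabla_X(\lambda_iY)=\lambda_i\nabla_XY$, so $\nabla_XY\in\Gamma(D_i)$. A parallel distribution is automatically involutive (torsion-freeness of $\nabla$ gives $[Y,Z]=\nabla_YZ-\nabla_ZY\in\Gamma(D_i)$ for $Y,Z\in\Gamma(D_i)$) and has totally geodesic leaves, and the same holds for the orthogonal complements $D_i^{\perp}=\bigoplus_{j\neq i}D_j$. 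Thus $TM=D_1\oplus\cdots\oplus D_r$ is an orthogonal decomposition into mutually orthogonal parallel subbundles.

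Finally I would invoke the local de Rham splitting theorem, quoted as \cite[Chap.~4]{sn}: such a decomposition forces every point to have a neighborhood isometric to a Riemannian product $(M_1,g_1)\times\cdots\times(M_r,g_r)$ in which $TM_i$ corresponds to $D_i$. On such a product chart the off-diagonal blocks of $T$ vanish because $D_i\perp D_j$ for $i\neq j$ while $\widehat{T}$ maps $D_i$ into $D_i$, and on the $i$-th factor $T(u,v)=g(\widehat{T}u,v)=\lambda_i g_i(u,v)$; hence $T=\lambda_1g_1\oplus\cdots\oplus\lambda_rg_r$, as claimed. I expect no real obstacle here: the only non-elementary ingredient is the de Rham theorem itself, which we are entitled to assume, and all the remaining steps are routine manipulations with the identity $\nabla\widehat{T}=0$.
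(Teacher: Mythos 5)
Your proof is correct and is essentially the standard argument that the paper is implicitly deferring to when it cites Kobayashi--Nomizu: parallel transport preserves the characteristic polynomial of the associated self-adjoint endomorphism, so the eigenvalues are constant, the eigendistributions are smooth, mutually orthogonal, and parallel, and the local de Rham splitting then gives the product structure with $T$ restricting to $\lambda_i g_i$ on each factor. The paper itself offers no proof of this proposition (only the reference), so there is nothing to compare beyond noting that your reconstruction is faithful and complete.
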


Meanwhile, one may calculate from \eqref{tensor1}, for $[i]\neq [j]$,
$$T_{ij,ij}=\sum_{k\notin [i],[j]}\frac{2T^2_{ij,k}}{b_k-b_i},~~~~T_{ij,ji}=\sum_{k\notin [i],[j]}\frac{2T^2_{ij,k}}{b_k-b_j},
$$
and
\begin{equation}\label{tensor2}
R_{ijij}=\sum_{k\notin [i],[j]}\frac{2T^2_{ij,k}}{(b_k-b_i)(b_k-b_j)},
\end{equation}
using the Ricci identity. It is important that one immediately sees from \eqref{tensor2} the following useful fact in this paper.

\begin{lemma}\label{lecoda1}
Let $T=\sum_{ij}T_{ij}\omega_i\otimes\omega_j$ be an isoparametric tensor on the Riemannian manifold $(M^n,g)$. Under the orthonormal basis
$\{E_1,\cdots,E_n\}$, the coefficients of $T$ have the following form
$$\left(T_{ij}\right)=diag(b_{\bar{1}},\cdots,b_{\bar{1}},b_{\bar{2}},\cdots,b_{\bar{2}},\cdots,b_{\bar{r}},\cdots,b_{\bar{r}}),
~~b_{\bar{1}}<\cdots<b_{\bar{r}}.$$
Then
\begin{equation}\label{tensor3}
\begin{split}
&R_{ijij}\leq 0,~~when~~i\in [\bar{1}],~j\in [\bar{r}],\\
&R_{ijij}\geq 0,~~when~~i\in [\bar{k}],~ j\in [\overline{k+1}],~~for~~\bar{k}=\bar{1},\cdots,\overline{r-1},
\end{split}
\end{equation}
where $[\bar i] = \{m| b_m = b_{\bar i}\}$.
\end{lemma}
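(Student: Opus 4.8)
The plan is to read off both families of inequalities directly from the curvature identity \eqref{tensor2}. Since $T$ is an isoparametric tensor, in the adapted orthonormal frame the Codazzi equation forces the relations \eqref{tensor1} on the covariant derivatives $T_{ij,k}$, and then the Ricci identity produces, for any pair of indices with $[i]\neq[j]$,
\[
R_{ijij}=\sum_{k\notin[i],[j]}\frac{2\,T_{ij,k}^{2}}{(b_k-b_i)(b_k-b_j)}.
\]
Every numerator $2\,T_{ij,k}^{2}$ is nonnegative, so the sign of $R_{ijij}$ is dictated entirely by the common sign of the summands, that is, by the sign of the product $(b_k-b_i)(b_k-b_j)$ as $k$ ranges over the indices outside $[i]\cup[j]$. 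Thus the whole proof reduces to locating each remaining eigenvalue $b_k$ relative to the pair $\{b_i,b_j\}$ on the real line; the point to keep in mind is that $[i]\cup[j]$ is exactly the set of indices $k$ with $b_k\in\{b_i,b_j\}$, so every eigenvalue appearing in the sum is strictly different from both $b_i$ and $b_j$.

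First I would treat the extreme case $i\in[\bar{1}]$, $j\in[\bar{r}]$, so that $b_i=b_{\bar{1}}=\min_m b_m$ and $b_j=b_{\bar{r}}=\max_m b_m$. For every $k\notin[i]\cup[j]$ one then has $b_{\bar{1}}<b_k<b_{\bar{r}}$, hence $b_k-b_i>0$ while $b_k-b_j<0$, so $(b_k-b_i)(b_k-b_j)<0$. Every summand in the displayed formula is therefore $\le 0$, and we conclude $R_{ijij}\le 0$.

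Next I would handle the consecutive case $i\in[\bar{k}]$, $j\in[\overline{k+1}]$, where $b_{\bar{k}}<b_{\overline{k+1}}$ are neighbouring eigenvalues of $T$. The decisive observation is that no eigenvalue of $T$ lies strictly between $b_{\bar{k}}$ and $b_{\overline{k+1}}$; hence for each index $m\notin[i]\cup[j]$ either $b_m<b_{\bar{k}}$ or $b_m>b_{\overline{k+1}}$. In the first alternative $b_m-b_i<0$ and $b_m-b_j<0$; in the second, both differences are positive; in either case $(b_m-b_i)(b_m-b_j)>0$, so every summand is $\ge 0$ and $R_{ijij}\ge 0$. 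This exhausts the statement (the case $r=1$ being vacuous). I do not expect a genuine obstacle here: the argument is a one-line sign analysis of \eqref{tensor2}, and the only thing requiring a moment's care is the bookkeeping that the indices summed over really do avoid both $b_i$ and $b_j$, which is built into the condition $k\notin[i],[j]$ together with the strict ordering $b_{\bar{1}}<\cdots<b_{\bar{r}}$.
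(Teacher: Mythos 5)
Your proof is correct and follows exactly the approach the paper intends: the paper states this lemma as something "one immediately sees from \eqref{tensor2}," and you have simply spelled out the routine sign analysis of the summands $(b_k-b_i)(b_k-b_j)$ in the two cases. Nothing more is needed.
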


As a consequence, for instance, one can obtain the following strong geometric constraints for a Riemannian manifold to have an isoparametric tensor.

\begin{Theorem}\label{lecoda}
Let $T=\sum_{ij}T_{ij}\omega_i\otimes\omega_j$ be an isoparametric tensor on a Riemannian manifold $(M^n,g)$.
If $(M^n,g)$ has non-negative sectional curvature (or
non-positive sectional curvatures), then $T$ is parallel.
\end{Theorem}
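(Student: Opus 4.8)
The plan is to first force every ``mixed'' sectional curvature $R_{ijij}$ with $[i]\neq[j]$ to vanish, using the one-sided curvature sign together with the generalized Cartan identity \eqref{tensor4}, and then to upgrade this to $T_{ij,k}=0$ for all $i,j,k$ by a bootstrap argument on the quadratic relation \eqref{tensor2}; by \eqref{codac} together with the first line of \eqref{tensor1} the latter is exactly parallelism of $T$. Throughout I work in the adapted orthonormal frame of Section \ref{subsec:iso-property-1}, with constant eigenvalues $b_{\bar 1}<\cdots<b_{\bar r}$; since there is nothing to prove when $r\le 2$ by Proposition \ref{isop1}, I may assume $r\ge 3$, and I recall that $T_{ij,k}$ is totally symmetric in its three indices (Codazzi equation plus symmetry of $T$).

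For the first step, assume $(M^n,g)$ has nonnegative sectional curvature; the nonpositive case is entirely symmetric. Taking $i$ with $b_i=b_{\bar 1}$ in \eqref{tensor4}, every denominator $b_j-b_{\bar 1}$ ($j\notin[\bar 1]$) is positive while $R_{ijij}\ge 0$, so the vanishing sum has all summands $\ge 0$, whence $R_{ijij}=0$ for $j\notin[\bar 1]$; the same with $b_i=b_{\bar r}$ gives $R_{ijij}=0$ for $j\notin[\bar r]$ (this is also consistent with Lemma \ref{lecoda1}). I then propagate inward: assuming $R_{ijij}=0$ whenever $b_i$ or $b_j$ lies in $\{b_{\bar 1},\dots,b_{\overline{m-1}}\}\cup\{b_{\overline{r-m+2}},\dots,b_{\bar r}\}$, I take $b_i=b_{\bar m}$ in \eqref{tensor4}; all terms with $b_j<b_{\bar m}$ and all terms with $b_j$ in the top covered groups already vanish, and the surviving terms all have $b_j>b_{\bar m}$, hence positive denominator, so they too vanish (symmetrically from the top). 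Letting $m$ run until the two ends meet yields $R_{ijij}=0$ for every pair with $[i]\neq[j]$.

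For the second step I show $T_{ij,k}=0$ whenever $[i],[j],[k]$ are three distinct eigenvalue groups, say of values $b_{\bar a}<b_{\bar b}<b_{\bar c}$, by induction on the gap $\delta=b-a$ between the two smallest of them. Feeding the pair $i\in[\bar a]$, $j\in[\bar b]$ into \eqref{tensor2} and using $R_{ijij}=0$ gives
\[
0=\sum_{k'\notin[\bar a],[\bar b]}\frac{2T_{ij,k'}^2}{(b_{k'}-b_{\bar a})(b_{k'}-b_{\bar b})}.
\]
A summand has negative coefficient exactly when $k'$ lies in a group strictly between $[\bar a]$ and $[\bar b]$, and for such $k'$ the triple $\{[\bar a],[k'],[\bar b]\}$ has gap between its two smallest groups strictly less than $\delta$, so $T_{ij,k'}=0$ by the inductive hypothesis (when $\delta=1$ there are no such terms at all). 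Deleting these leaves a sum of nonnegative terms equal to zero, so every $T_{ij,k'}=0$, in particular the one with $k'\in[\bar c]$. Hence $T_{ij,k}=0$ for all $i,j,k$, i.e. $T$ is parallel.

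I expect the second step to be the crux: the first step by itself only kills the mixed sectional curvatures, and \eqref{tensor2} does not directly yield $T_{ij,k}=0$ because the coefficients $(b_{k'}-b_i)(b_{k'}-b_j)$ change sign; ordering the triples by the gap between their two smallest eigenvalue groups is precisely what guarantees that the negative-coefficient contributions have already been eliminated at an earlier stage. A minor technical point in the first step is the termination of the inward induction when $r$ is even versus odd (when the bottom and top ranges meet), but in each of those boundary cases the Cartan sum collapses to same-sign terms over a single group and the same reasoning applies.
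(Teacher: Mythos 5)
Your proof is correct, and the core mechanism is the same one the paper uses: a bootstrap on the quadratic relation \eqref{tensor2}, ordered so that every negative-coefficient summand has already been killed when it is needed. The organization, though, is genuinely different in two respects, and worth noting. First, you isolate a preliminary step that uses the generalized Cartan identity \eqref{tensor4} to make every mixed sectional curvature $R_{ijij}$ vanish; once this is done the curvature sign plays no further role, so your second step is sign-agnostic and handles the nonnegative and nonpositive cases in one stroke. The paper never invokes \eqref{tensor4} in this proof: it runs the bootstrap directly on \eqref{tensor2}, choosing the starting pair $(i,j)\in[\bar 1]\times[\bar 2]$ when the curvature is nonpositive and $(i,j)\in[\bar 1]\times[\bar r]$ when it is nonnegative, and obtains $R_{ijij}=0$ only as a byproduct of forcing every term $T^2_{ij,k}$ to vanish. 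Second, your bootstrap is indexed by the gap between the two smallest eigenvalue groups in the triple $([i],[j],[k])$, whereas the paper fixes $i\in[\bar 1]$ and sweeps $j$ through the remaining groups in one direction, then advances $i$ and repeats. Both orderings guarantee that a negative-coefficient summand in \eqref{tensor2} corresponds to a triple handled at an earlier stage, so the two arguments are interchangeable; yours has the small advantage of making \eqref{tensor4} the natural opening move (consistent with how that identity is used in Theorem \ref{pro3} and in the conformally flat case in the appendix) and of rendering the inductive step purely combinatorial once the curvatures are gone.
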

\begin{proof}
Let us first present a proof in the cases when $(M^n,g)$ has non-positive sectional curvature.
We start with $i\in[\bar{1}]$ and $j\in[\bar{2}]$ in equation (\ref{tensor2}).  Notice that
$$(b_k-b_{\bar{1}})(b_k-b_{\bar{2}})>0,~~when~~k\notin [\bar{1}]\cup [\bar{2}].$$
From \eqref{tensor2} we therefore observe that
$$T_{ij,k}=0,~~when~~i\in[{\bar{1}}],~~j\in[{\bar{2}}],~1\leq k\leq n.$$
We then consider $i\in [\bar 1]$ and $j\in [\bar 3]$ in equation (\ref{tensor2}).
This time we notice that
$$(b_k-b_{\bar{1}})(b_k-b_{\bar{3}})>0,~~when~~k\notin [\bar{1}]\cup [\bar{2}] \cup[\bar 3]$$
and $T_{ij,k}=0,~i\in[{\bar{1}}],~k\in[{\bar{2}}]$. From \eqref{tensor2} again we observe that
$$T_{ij,k}=0,~~when~~i\in[{\bar{1}}],~~j\in[\bar 2]\cup[{\bar{3}}],~1\leq k\leq n.$$
Repeatedly we can prove that $T_{ij,k}=0$ for $i\in [\bar 1]$ and $j\in [\bar 2]\cup[\bar 3]\cdots \cup[\bar r]$. Similarly we
can prove $T_{ij,k} = 0$ for all indices, thus $T$ is parallel.

The proof for the cases when $(M^n, g)$ has non-negative curvature uses the same idea but one starts with $i\in [\bar 1]$
and $j\in [\bar r]$ instead.
\end{proof}

Next we want to derive the other important fact in this paper, the generalized Cartan identity (cf. \cite{li3}) on Riemannian manifolds with an isoparametric tensor  $T$.
\begin{equation}\label{tensor4}
\sum_{j\notin [i]}\frac{R_{ijij}}{b_j-b_i}=\sum_{j, k\notin [i]}\frac{2T^2_{ij,k}}{(b_k-b_i)(b_k-b_j)(b_i-b_j)}=0,
\end{equation}
which is easily seen as we note that the matrix, for each $i$ fixed,
$$\Big(\frac{2T^2_{ij,k}}{(b_k-b_i)(b_k-b_j)(b_i-b_j)}\Big)$$ is antisymmetric for indices $j,k$.

To show that the generalized Cartan identity is powerful in understanding the curvature structure of Riemannian manifolds with isoparametric tensors,
we present a proof of the following interesting result that is believed to be known.
Before we state the result we want to recall the Kulkarni-Nomizu product of tensors $T_1=\sum_{ij}T_{ij}\omega_i\otimes\omega_j$ and
$T_2=\sum_{ij}\hat{T}_{ij}\omega_i\otimes\omega_j$ defined by
$$(T_1\bigodot T_2)_{ijkl}=T_{ik}\hat{T}_{jl}+T_{jl}\hat{T}_{ik}-T_{il}\hat{T}_{jk}-T_{jk}\hat{T}_{il}.$$
It its then known that, on a locally conformally flat manifold $(M^n \ g)$ ($n\geq 3$),
\begin{equation}\label{con}
R_{ijkl}=(S\bigodot g)_{ijkl},
\end{equation}
where $S=\frac{1}{n-2}(Ric-\frac{R}{2(n-1)}g)$ is the so-called Schouten tensor.

\begin{Theorem}\label{cor3}
Let $(M^n,g),(n\geq 3)$ be a locally conformally flat Riemannian manifold. If the eigenvalues of the Schouten tensor $S$ are constant,
then either,\\
(a) $(M^n,g)$ is of constant curvature, or\\
(b) $(M^n,g)$ is locally reducible, $(M^n,g)=(M_1,g_1)\times(M_2,g_2)$, and there exists constant $\lambda$ such that
$$\pi_{1*}S=\lambda g_1,~~\pi_{2*}S=-\lambda g_2.$$
Where $\pi_1:M^n\to M_1,~~\pi_2:M^n\to M_2$ are the standard projections.
\end{Theorem}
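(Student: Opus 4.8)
The plan is to recognize the Schouten tensor $S$ as an isoparametric tensor and then combine the defining relation \eqref{con} with the generalized Cartan identity \eqref{tensor4}. First I would recall that on a locally conformally flat manifold of dimension $n\geq 3$ the Schouten tensor is automatically a Codazzi tensor: for $n\geq 4$ the vanishing of the Weyl tensor forces the Cotton tensor $C_{ijk}=\nabla_iS_{jk}-\nabla_jS_{ik}$ to vanish, while for $n=3$ local conformal flatness is precisely the vanishing of $C$. Hence, under the hypothesis that the eigenvalues of $S$ are constant, $S$ is an isoparametric tensor in the sense of Definition \ref{iso-t}, and the whole toolkit of Appendix A applies to it.

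Next I would pick a local orthonormal frame $\{E_1,\dots,E_n\}$ diagonalizing $S$, write $(S_{ij})=diag(b_1,\dots,b_n)$ with the $b_i$ constant, and read off from \eqref{con} that $R_{ijij}=(S\bigodot g)_{ijij}=b_i+b_j$ for $i\neq j$. Feeding this into \eqref{tensor4} gives, for each fixed $i$,
\[
0=\sum_{j\notin[i]}\frac{R_{ijij}}{b_j-b_i}=\sum_{j\notin[i]}\frac{b_i+b_j}{b_j-b_i}=\sum_{j\notin[i]}\frac{b_j^2-b_i^2}{(b_j-b_i)^2}.
\]
Choosing $i_0$ so that $b_{i_0}^2=\min\{b_1^2,\dots,b_n^2\}$ makes every summand on the right non-negative, so they all vanish and $b_j^2=b_{i_0}^2$ for every $j$. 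Thus all eigenvalues of $S$ have the same square; equivalently, $S$ has at most the two distinct eigenvalues $c$ and $-c$ for some constant $c\geq 0$.

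From here the dichotomy is immediate. If $c=0$, or if only one of $\pm c$ actually occurs among the eigenvalues, then $S=\pm c\,g$ and \eqref{con} reduces to $R_{ijkl}=\pm c\,(g\bigodot g)_{ijkl}$, so $(M^n,g)$ has constant sectional curvature, which is alternative (a). If both $c$ and $-c$ occur, then $S$ is an isoparametric tensor with exactly two distinct eigenvalues; by Proposition \ref{isop1} it is parallel, and Proposition \ref{isop} then yields a local splitting $(M^n,g)=(M_1,g_1)\times(M_2,g_2)$ with $S=\lambda g_1\oplus(-\lambda)g_2$ where $\lambda=c$, i.e.\ $\pi_{1*}S=\lambda g_1$ and $\pi_{2*}S=-\lambda g_2$, which is alternative (b). The only step needing genuine care is the first one — establishing that $S$ is Codazzi (vanishing of the Cotton tensor when $n=3$, of the Weyl tensor when $n\geq 4$) — together with the bookkeeping in case (a) that the Kulkarni--Nomizu identity $R_{ijkl}=c\,(g\bigodot g)_{ijkl}$ genuinely encodes constant curvature; everything else is a routine application of \eqref{con}, \eqref{tensor4}, and Propositions \ref{isop1}--\ref{isop}.
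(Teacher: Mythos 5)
Your proposal is correct and follows essentially the same route as the paper: recognize $S$ as an isoparametric tensor, substitute $R_{ijij}=b_i+b_j$ from \eqref{con} into the generalized Cartan identity \eqref{tensor4}, deduce that all $b_j^2$ are equal (you extremize via the minimum, the paper via the maximum — an immaterial difference), and then invoke Propositions \ref{isop1} and \ref{isop} for the $r=2$ splitting. Your extra remark justifying why $S$ is Codazzi (vanishing Cotton tensor for $n=3$, vanishing Weyl for $n\geq 4$) fills in a step the paper treats as standard, but the argument is the same.
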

\begin{proof} First of all, from the assumption that $g$ is locally conformally flat, we know that the Schouten tensor $S$ is an isoparametric tensor.
Hence, under a properly chosen a local orthonormal basis,
$$\left(S_{ij}\right)=diag(b_1,b_2,\cdots,b_n)=diag(b_{\bar{1}},\cdots,b_{\bar{1}},\cdots,b_{\bar{r}},\cdots,b_{\bar{r}}),
$$
for some constants $b_{\bar{1}}<\cdots<b_{\bar{r}}$. From (\ref{tensor4}) and \eqref{con}, for each $i$ fixed, we have
\begin{equation}\label{conc}
0 = \sum_{j\notin [i]}\frac{R_{ijij}}{b_j-b_i}=\sum_{j\notin [i]}\frac{b_j+b_i}{b_j-b_i}=\sum_{j\notin [i]}\frac{b_j^2-b_i^2}{(b_j-b_i)^2}.
\end{equation}
Now let $b_{i_0}^2=max\{b_1^2,\cdots,b_n^2\}$ and let $i=i_0$ in equation (\ref{conc}), we have
$$\sum_{j\notin [i_0]}\frac{b_j^2-b_{i_0}^2}{(b_j-b_{i_0})^2}=0,$$
which implies that $b_{i_0}^2-b_j^2=0$ and therefore $r\leq 2$. It is clear that $(M^n, \ g)$ is of constant curvature if $r=1$ and the proof is
complete in the light of Proposition \ref{isop1} and Proposition \ref{isop}.
\end{proof}


\subsection{Commuting isoparametric tensors}\label{subsec:iso-property-2}

Suppose that $(M^n, \ g)$ is a Riemannian manifold. Then we say that two $(0, 2)$-tensors are commuting if they are commuting as linear
transformations. Given two commuting isoparametric tensors
$$
T_1=\sum_{ij}T_{ij}\omega_i\otimes\omega_j \text{ and } T_2=\sum_{ij}\hat{T}_{ij}\omega_i\otimes\omega_j,
$$
we may choose a local orthonormal frame $\{E_1, E_2, \cdots, E_n\}$ so that
\begin{equation}\label{locbasis}
\left\{\begin{split}
&\left(T_{ij}\right)=diag(b_{\bar{1}},\cdots,b_{\bar{1}},b_{\bar{2}},\cdots,b_{\bar{2}},\cdots,b_{\bar{r}},\cdots,b_{\bar{r}})\\
&\left(\hat{T}_{ij}\right)=diag(a_1,\cdots,a_n)
\end{split}\right.
\end{equation}
for constants $b_{\bar 1} < b_{\bar 2} < \cdots < b_{\bar r}$ and $a_1, a_2, \cdots, a_n$. Immediately we know
\begin{equation}\label{cotensor}
\begin{split}
&T_{ij,k}=0,~~when~~[i]=[j],~or~[j]=[k],\\
&\hat{T}_{ij,k}=0,~~when~~a_i=a_j,~or~a_j=a_k,\\
&\frac{a_i-a_j}{b_i-b_j}T_{ij,k}=\hat{T}_{ij,k},~~when~~[i]\neq [j].
\end{split}
\end{equation}
Particularly the third equation in \eqref{cotensor} and $\hat T_{ij,k}=\hat T_{ik,j}$ implies
\begin{equation}\label{extra-vanish}
\hat T_{ij,k} = 0 \text{ for $[j] = [i]$ and $k\notin [j]$},
\end{equation}
which means more components of the commuting isoparametric tensors are forced to vanish and allows us to focus on the behavior of $T_2$ restricted to an eigenspace
of $T_1$
$$V_{b_i} = Span\{E_m: m\in [i]\} \text{ or } V_{b_{\bar k}} = Span\{E_m: m\in [\bar k]\}.
$$
We can change the order of the subbasis in the eigenspace $V_{b_{\bar{k}}}$ such that
$$\left(\hat{T}_{ij}\right)|_{i,j\in [\bar{k}]}=diag(\underbrace{a_{k_1},\cdots,a_{k_1}},\underbrace{a_{k_2},\cdots,a_{k_2}}\cdots,\underbrace{a_{k_m},\cdots,a_{k_m}})$$
for $a_{k_1}<a_{k_2}<\cdots<a_{k_m}.$
We then define the index sets
$$
(i):=\{l\in [i] | \quad a_l=a_i\} \text{ and }
(\bar{k_ i}) := \{l\in [\bar k]| \quad a_l = a_{k_i}\}.
$$
From \eqref{extra-vanish}, we have the following lemma.
\begin{lemma}\label{eigenspace} Suppose that $T_1$ and $T_2$ are two commuting isoparametric tensors as in the above. Then, for some $[\bar{k}]$ fixed,
$(i),(j)\in [\bar{k}]$ and $(i)\neq (j)$,
\begin{equation}\label{aa1}
(a_i-a_j)\omega_{ij}=\sum_{l\in[\bar{k}]}\hat{T}_{ij,l}\omega_l
\end{equation}
and
\begin{equation}\label{aa2}
R_{ijij}=\sum_{l\in[\bar{k}],l\notin (i),(j)}\frac{2\hat{T}_{ij,l}^2}{(a_i-a_l)(a_j-a_l)}.
\end{equation}
More importantly we have the generalized Cartan identity for $i\in[\bar{k}]$
\begin{equation}\label{aa4}
\sum_{j\in[\bar{k}],j\notin(i)}\frac{R_{ijij}}{a_i-a_j}=\sum_{j,l\in[\bar{k}],j,l\notin (i)}\frac{\hat{T}_{ij,l}^2}{(a_i-a_l)(a_j-a_l)(a_i-a_j)}=0.
\end{equation}
\end{lemma}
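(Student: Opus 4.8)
The plan is to read these three identities as the restriction to a single eigenspace $V_{b_{\bar k}}$ of $T_1$ of the standard Codazzi and curvature identities for an isoparametric tensor, namely the analogues of \eqref{codac}, \eqref{tensor2} and \eqref{tensor4} applied to $T_2$. The one structural input that makes the restriction legitimate is \eqref{extra-vanish}: since $i,j\in[\bar k]$ forces $[i]=[j]=[\bar k]$, every component $\hat T_{ij,l}$ with $l\notin[\bar k]$ vanishes, so on $V_{b_{\bar k}}$ the tensor $T_2$ genuinely decouples from the rest of the manifold and behaves like an isoparametric tensor carried by that subspace. Throughout I use that $T_2$ is a Codazzi tensor, so that (after lowering an index) $\hat T_{ij,l}$ is totally symmetric in $i,j,l$, together with the vanishing relations in \eqref{cotensor}.

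First I would establish \eqref{aa1}. Because $T_2$ is a Codazzi tensor whose eigenvalues $a_i$ are constant, the derivation of \eqref{codac} applied verbatim to $T_2$ gives $(a_i-a_j)\omega_{ij}=\sum_l \hat T_{ij,l}\omega_l$ whenever $a_i\neq a_j$, and $(i)\neq(j)$ with $i,j\in[\bar k]$ says precisely that $a_i\neq a_j$. To cut the sum down to $l\in[\bar k]$ I invoke \eqref{extra-vanish}, which kills every term with $l\notin[\bar k]$; this proves \eqref{aa1}. Next, for \eqref{aa2}, I would apply \eqref{tensor2} directly to the isoparametric tensor $T_2$, obtaining $R_{ijij}=\sum_l \frac{2\hat T_{ij,l}^2}{(a_i-a_l)(a_j-a_l)}$ with $l$ running over all indices satisfying $a_l\neq a_i$ and $a_l\neq a_j$ (alternatively one re-derives this from \eqref{aa1} through the structure equations and the Ricci identity, exactly as \eqref{tensor2} is obtained). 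Then \eqref{extra-vanish} removes the terms with $l\notin[\bar k]$, and for $l\in[\bar k]$ the conditions $a_l\neq a_i$, $a_l\neq a_j$ are exactly $l\notin(i)$, $l\notin(j)$, which is \eqref{aa2}.

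Finally, the generalized Cartan identity \eqref{aa4} follows by substituting \eqref{aa2} into $\sum_{j\in[\bar k],\,j\notin(i)}\frac{R_{ijij}}{a_i-a_j}$. This produces a double sum over $j,l\in[\bar k]$ with $j,l\notin(i)$ whose summand is (up to an overall constant) $\hat T_{ij,l}^2/[(a_i-a_l)(a_j-a_l)(a_i-a_j)]$; since $\hat T_{ij,l}=\hat T_{il,j}$ while the denominator reverses sign under $j\leftrightarrow l$, the summand is antisymmetric in $j$ and $l$, and the terms with $j=l$ or with $a_j=a_l$ vanish anyway by \eqref{cotensor}, so the whole sum is $0$.

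I do not expect a genuine obstacle here: this is essentially bookkeeping of the index sets $[\bar k]$, $(i)$, $(j)$. The only point that demands care is verifying that the eigenspace condition $[i]=[j]=[\bar k]$ is exactly the hypothesis under which \eqref{extra-vanish} applies, so that one may legitimately pass from the global identities \eqref{codac}, \eqref{tensor2}, \eqref{tensor4} to their $V_{b_{\bar k}}$-restricted forms \eqref{aa1}, \eqref{aa2}, \eqref{aa4}.
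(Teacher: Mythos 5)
Your proposal is correct and reproduces the paper's intended argument: the paper's proof of this lemma consists entirely of the observation preceding the statement that \eqref{extra-vanish} (derived from \eqref{cotensor} and the Codazzi symmetry of $\hat T_{ij,k}$) forces all components $\hat T_{ij,l}$ with $i,j\in[\bar k]$ and $l\notin[\bar k]$ to vanish, so that the global identities \eqref{codac}, \eqref{tensor2}, \eqref{tensor4}, rewritten for $T_2$ in the common diagonalizing frame, restrict cleanly to the eigenspace $V_{b_{\bar k}}$. Your bookkeeping of the index sets $(i)$, $(j)$, $[\bar k]$ and the antisymmetry argument for \eqref{aa4} are exactly the intended steps.
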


One important relation that ties two commuting isoparametric tensors more intimately to the geometry of the underlined manifold
and comes naturally from the integrability conditions \eqref{equa4} when we are concerned with the M\"{o}bius second fundamental form $T_1= B$
and the Blaschke tensor $T_2= A$ for a hypersurface in $f: M^n\to\mathbb{R}^{n+1}$ is
\begin{equation}\label{gauss}
R_{ijkl}=\frac{1}{2}(T_1\bigodot T_1)_{ijkl}+(T_2\bigodot g)_{ijkl}.
\end{equation}

\begin{lemma}\label{ble2} Suppose that $T_1$ and $T_2$ are two commuting isoparametric tensors on a Riemannian manifold
$(M^n, \ g)$ and satisfy the relation \eqref{gauss}. Then $T_2|_{V_{b_{\bar{k}}}}$ has two distinct eigenvalues at most. Moreover
$$b_{\bar{k}}^2+a_{\bar{k}}+\bar{a}_{\bar{k}}=0$$
when $T_2|_{V_{b_{\bar{k}}}}$ has two distinct eigenvalues $a_{\bar{k}}$ and $\bar{a}_{\bar{k}}$.
\end{lemma}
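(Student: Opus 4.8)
The plan is to exploit the relation \eqref{gauss} restricted to the eigenspace $V_{b_{\bar{k}}}$ of $T_1$, where $T_1$ acts as the scalar $b_{\bar{k}}$, and to feed this into the generalized Cartan identity \eqref{aa4} for the restriction of $T_2$ to that eigenspace. First I would fix $\bar{k}$ and work entirely inside $[\bar{k}]$. For indices $i,j\in[\bar{k}]$ the Kulkarni--Nomizu term $\frac12(T_1\bigodot T_1)_{ijij}$ contributes $b_{\bar{k}}^2$ (since both eigenvalues of $T_1$ equal $b_{\bar{k}}$), and $(T_2\bigodot g)_{ijij}$ contributes $a_i+a_j$ (using the diagonal form of $T_2$ in the chosen frame). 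Hence \eqref{gauss} gives the clean formula
\begin{equation*}
R_{ijij}=b_{\bar{k}}^2+a_i+a_j,\qquad i,j\in[\bar{k}],\ (i)\neq(j).
\end{equation*}

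Next I would substitute this into the generalized Cartan identity \eqref{aa4}: for each fixed $i\in[\bar{k}]$,
\begin{equation*}
0=\sum_{j\in[\bar{k}],\,j\notin(i)}\frac{R_{ijij}}{a_i-a_j}
=\sum_{j\in[\bar{k}],\,j\notin(i)}\frac{b_{\bar{k}}^2+a_i+a_j}{a_i-a_j}.
\end{equation*}
Writing $b_{\bar{k}}^2+a_i+a_j=(b_{\bar{k}}^2+2a_i)-(a_i-a_j)$, the sum splits into $(b_{\bar{k}}^2+2a_i)\sum_{j\notin(i)}\frac{1}{a_i-a_j}$ minus the number of terms. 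Actually the cleaner manipulation, mirroring the proof of Theorem \ref{cor3}, is to symmetrize: the summand equals $\frac{(a_i-a_j)(\,\cdot\,)}{(a_i-a_j)^2}$ type expression. The decisive move is to choose $i$ so that $a_i$ is the \emph{largest} eigenvalue of $T_2|_{V_{b_{\bar{k}}}}$, say $a_i=a_{k_m}$. Then every term $j\notin(i)$ in the sum has $a_i-a_j>0$, and I would like to argue each numerator $b_{\bar{k}}^2+a_i+a_j$ has a fixed sign, forcing all terms to vanish. To get this I would run the same argument with $i$ the \emph{smallest} eigenvalue $a_{k_1}$; combining the two extreme choices pins down $b_{\bar{k}}^2+a_{k_1}+a_{k_m}=0$ and, crucially, forces that $T_2|_{V_{b_{\bar{k}}}}$ cannot have a third distinct eigenvalue strictly between $a_{k_1}$ and $a_{k_m}$ (such a value would make the corresponding $R$-term, hence a summand, have a sign inconsistent with the vanishing of the whole sum). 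This is the standard "max/min eigenvalue in a Cartan-type identity" trick already used for the Schouten tensor in Theorem \ref{cor3}.

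Once $T_2|_{V_{b_{\bar{k}}}}$ is shown to have at most two distinct eigenvalues $a_{\bar{k}}$, $\bar{a}_{\bar{k}}$, the identity relation $b_{\bar{k}}^2+a_{\bar{k}}+\bar{a}_{\bar{k}}=0$ follows directly: take $i\in(\bar{k})$ and $j$ in the other eigengroup of $[\bar{k}]$, so that $R_{ijij}=b_{\bar{k}}^2+a_{\bar{k}}+\bar{a}_{\bar{k}}$, and plug into \eqref{aa4}, which now has summands of only one type and forces $R_{ijij}=0$. The main obstacle I anticipate is the bookkeeping in the middle step: making precise why an intermediate third eigenvalue is impossible requires carefully tracking the signs of $b_{\bar{k}}^2+a_i+a_j$ for the various pairs and showing these are incompatible with the Cartan identity summing to zero — essentially one needs that the function $t\mapsto \frac{b_{\bar{k}}^2+2a_{k_1}+t'}{\cdots}$ is monotone enough. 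But since the whole setup is internal to a single eigenspace where $T_1$ is scalar, \eqref{gauss} degenerates to exactly the locally-conformally-flat situation of Theorem \ref{cor3} (with $b_{\bar{k}}^2$ playing a constant shift), so I expect the argument to be a near-verbatim adaptation of that proof and not to present any genuinely new difficulty.
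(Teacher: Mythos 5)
Your starting point is right: inside $V_{b_{\bar k}}$ the tensor $T_1$ is the scalar $b_{\bar k}$, so \eqref{gauss} reduces to $R_{ijij}=b_{\bar k}^2+a_i+a_j$ for $i,j\in[\bar k]$ with $(i)\neq(j)$, and the restricted Cartan identity \eqref{aa4} is the correct tool. But the extremal move you propose does not close the argument. Choosing $i$ with $a_i$ maximal (or minimal) only fixes the sign of the denominators $a_i-a_j$; the numerators $b_{\bar k}^2+a_i+a_j$ need not have a consistent sign, because that depends on how large $b_{\bar k}^2$ is relative to the $a$'s, and this is not controlled by $a_i$ being extremal. "Running the argument at both extremes and combining" is exactly the step you flag as uncertain, and as stated it doesn't pin anything down. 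The precise version of the Theorem~\ref{cor3} trick you have in mind is: set $\tilde a_j:=a_j+\tfrac12 b_{\bar k}^2$ (the shift is $\tfrac12 b_{\bar k}^2$, not $b_{\bar k}^2$), so that
\[
0=\sum_{j\in[\bar k],\,j\notin(i)}\frac{b_{\bar k}^2+a_i+a_j}{a_i-a_j}
=\sum_{j\in[\bar k],\,j\notin(i)}\frac{\tilde a_i+\tilde a_j}{\tilde a_i-\tilde a_j}
=\sum_{j\in[\bar k],\,j\notin(i)}\frac{\tilde a_i^{\,2}-\tilde a_j^{\,2}}{(\tilde a_i-\tilde a_j)^2},
\]
and choose $i_0$ with $\tilde a_{i_0}^{\,2}$ maximal, not with $a_{i_0}$ extremal. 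Then every summand is $\ge 0$, the sum is $0$, so $\tilde a_j^{\,2}=\tilde a_{i_0}^{\,2}$ for all $j\in[\bar k]$, whence $a_j\in\{a_{i_0},\,-a_{i_0}-b_{\bar k}^2\}$. This gives at most two distinct eigenvalues, and when there are two they sum to $-b_{\bar k}^2$, which is the claimed identity.

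With that correction your proof is complete, and it is a genuinely different (and arguably shorter) route than the paper's. The paper first uses the curvature formula \eqref{aa2} for the restricted pair to deduce that $R_{i_1i_2i_1i_2}\ge 0$ for the two smallest eigenvalues $a_{k_1}<a_{k_2}$, upgrades this via \eqref{gauss} to $R_{ijij}\ge 0$ for all off-diagonal pairs in $[\bar k]$, applies the Cartan identity at $i\in(\bar{k_1})$ where minimality makes every denominator negative, and then bootstraps \eqref{aa5} through \eqref{aa7}. Your route avoids \eqref{aa2} and the bootstrap entirely; the price is that you must identify the exact shift $\tfrac12 b_{\bar k}^2$ and maximize the shifted square rather than the eigenvalue itself.
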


\begin{proof}
For $a_{k_1}<a_{k_2}<\cdots<a_{k_m}$ and $i\in (k_1)$ and $j\in (k_2)$, it is easily seen from \eqref{aa2} that
$$
R_{ijij}=\sum_{l\in[\bar{k}],l\notin (\bar{k_1}),(\bar{k_2})}\frac{2\hat{T}_{ij,l}^2}{(a_{k_1}-a_l)(a_{k_2}-a_l)}\geq 0.$$
Hence, from \eqref{gauss},
\begin{equation}\label{aa3}
R_{ijij}=b_{\bar{k}}^2+a_i+a_j\geq b_{\bar{k}}^2+a_{k_1}+a_{k_2} \geq 0, ~~i,j\in[\bar{k}] \text{ and } (i)\neq(j).
\end{equation}
Therefore, from the generalized Cartan identity \eqref{aa4} in Lemma \ref{eigenspace}, we get
\begin{equation}\label{aa5}
R_{ijij} = b^2_{\bar k} + a_{k_1} + a_j =0, ~~i\in (\bar{k_1}) \text{ and } j\in[\bar{k}],j\notin(\bar{k_1}).
\end{equation}
The key of this proof is to realize that \eqref{aa5} allows us to further trim the generalized Cartan identity \eqref{aa4} for $i\in (\bar{k_2})$ into
\begin{equation}\label{aa6}
\sum_{j\in[\bar k], j\notin(\bar{k_1}), j\notin(\bar{k_2}))}\frac{R_{ijij}}{a_{k_2}-a_{j}} =0,
\end{equation}
which in turn implies
$$
R_{ijij} = b^2_{\bar k} + a_{k_2} + a_j =0, ~~i\in (\bar{k_2}) \text{ and } j\in[\bar{k}],j\notin(\bar{k_2}).
$$
Thus, repeating the above argument,  we can get
\begin{equation}\label{aa7}
R_{ijij} = b^2_{\bar k} + a_i + a_j =0 \text{ for all } i,j\in[\bar{k}] \text{ and } (i)\neq(j),
\end{equation}
which forces $m\leq 2$ and completes the proof.
\end{proof}

{\bf Acknowledgements:} The first author and third author are partially supported by the grant No.11471021 and No. 11171004 of NSFC. And
the second author is partially supported by NSF grant DMS-1303543.

\end{document}